\newtheorem{example}{Example}[section]
\newtheorem{remark}{Remark}[section]
\newcommand{\px}[1][x]{\partial_{#1}}
\newcommand{\dx}[1][x]{\,{\rm d}#1}
\newcommand{\cpt}[2][t]{{}_CD^{#2}_{0,#1}}
\newcommand{\mfrac}[1][2]{\frac{1}{2}}
\newcommand{\hRule}{\rule{0.95\linewidth}{0.2mm}}
\title{A new class of semi-implicit methods
with linear complexity for nonlinear fractional differential equations
\thanks{This work was supported by ARC Discovery Project DP150103675 and
 the MURI/ARO on ``Fractional PDEs for Conservation Laws and Beyond: Theory, Numerics and  Applications  (W911NF-15-1-0562)''.}
}
\author{Fanhai Zeng\thanks{School  of  Mathematical   Sciences,
Queensland   University  of  Technology,   Brisbane,   QLD 4001, Australia
(f2.zeng@qut.edu.au).}
\and Ian Turner$^{\dag,}$\thanks{Australian Research Council Centre of Excellence for Mathematical and Statistical Frontiers, Queensland University of
Technology, Brisbane, QLD 4001, Australia (i.turner@qut.edu.au).}
\and Kevin Burrage$^{\dag,}$\thanks{Visiting Professor, Department of  Computer  Science,     University  of
Oxford, OXI 3QD, UK (kevin.burrage@qut.edu.au).}
\and George Em Karniadakis\thanks{Division of Applied Mathematics, Brown University, Providence RI, 02912
(george\_karniadakis@brown.edu).}
}
\begin{document}

\maketitle

% REQUIRED
\begin{abstract}
We propose a new class of semi-implicit methods for solving nonlinear fractional differential equations and study their stability. Several versions of our new schemes are proved to be unconditionally
stable by choosing suitable parameters. Subsequently, we develop an efficient strategy to calculate
the discrete
convolution  {for} the approximation of the fractional operator in the semi-implicit method
and we derive an error bound of the fast convolution.
The memory requirement and computational cost
of the present semi-implicit methods with a fast convolution are about
$O(N\log n_T)$  and $O(Nn_T\log n_T)$, respectively, where
$N$ is a suitable positive integer and $n_T$ is the final number of time steps.
Numerical simulations, including the solution of a system of two nonlinear fractional diffusion equations
with different fractional orders in two-dimensions, are  presented to verify the effectiveness of the semi-implicit methods.
\end{abstract}

% REQUIRED
\begin{keywords}
Fast convolution, semi-implicit methods, fractional linear multi-step methods,
nonlinear fractional differential equations,
complex domain integration.
\end{keywords}

% REQUIRED
\begin{AMS}
26A33, 65M06, 65M12, 65M15, 35R11
\end{AMS}

\section{Introduction}\label{sec1}
Anomalous diffusion equations have attracted considerable interest over the last decade because of their ability to model  transport dynamics in complex systems  \cite{MetKla00,Pod-B99}. In these models, the underlying
fractional differential equations (FDEs) are \emph{nonlocal} and time-dependent,
which may cause computational difficulties due to the nonlocality and singularity of the fractional operator
\cite{Diethelm-B10,BafHes17b,SchLopLub06,YuPK16}.

The aim of this paper is to develop a class of semi-implicit and fast
time-stepping methods to efficiently solve nonlinear time-fractional FDEs. We also focus on how to
reduce the memory requirement and the computational cost of the numerical methods resulting from
the \emph{nonlinearity} and \emph{nonlocality} of  nonlinear time-dependent FDEs.
{ The  \emph{singularity} of the solution of the FDE is dealt with through the use of   correction terms,
a topic that is not investigated in detail in this work.}
%The  \emph{singularity} of the solutions to FDEs is resolved by adding correction terms, which
%is not investigated in detail in this work.
The interested reader is referred to \cite{DieFord06,Lub86,ZengZK17} for further details.

Semi-implicit methods have been widely applied to solve
nonlinear integer-order differential equations \cite{AscherRW95,Pareschi2005,KarIO91},
as these methods are efficient and have larger stability regions
than explicit methods, giving rise to a linear system of equations to obtain the numerical solutions.
However, semi-implicit methods for nonlinear time-fractional FDEs
have not been fully addressed in the literature.

%It is well known that explicit methods are efficient, but they have smaller stability regions
%compared with the implicit methods, which are not suitable to resolve stiff problems.
%For the implicit methods, a nonlinear system needs to be solved to derive numerical solutions,
%which may be costly. To balance the efficiency and accuracy,
%semi-implicit methods have been widely applied to
%solve integer-order nonlinear differential equations, which leads to
%a linear system  that can be solved efficiently \cite{}.
%However, semi-implicit methods for nonlinear time-fractional differential
%equations have not been fully addressed in literature.

There have been some explicit and semi-implicit methods for solving fractional differential equations
proposed recently.
Garrappa and his collaborator
have studied  explicit/semi-implicit methods for fractional ordinary differential equations (FODEs),
and also investigated the stability region of these methods,
see e.g. \cite{GalGar08,GalGar09,Gar09,Gar10,Garrappa15}.
Yuste and  Acedo \cite{YusteAcedo05} proposed an explicit difference method
for linear fractional diffusion equations and a weighted average version of \cite{YusteAcedo05}
was proposed in \cite{Yuste06}.
In 2015, Cao et al. \cite{CaoZZK16} proposed a time-splitting method for
nonlinear FODEs based on linear interpolation, and the stability
of the method was studied numerically. Two implicit-explicit time-stepping methods,
one conditionally stable and the other  unconditionally stable, were proposed
in \cite{CaoZK15}, and   stability analysis was given by following the idea in \cite{Lub86b}.
The stability region of a predictor-corrector method \cite{DieFF04} was investigated
in \cite{Gar10}.  The objective of this work
is to present a new class of semi-implicit methods for nonlinear FODEs and apply them to solve
nonlinear time-fractional FPDEs. The new semi-implicit method yields a linear system with a constant coefficient matrix
when it is applied to nonlinear time-fractional  differential equations.

%In 1986, Lubich \cite{Lub86b} investigated  the stability  properties of numerical
%methods for weakly  singular Volterra  integral  equations  of the second  kind,
%which extends  the stability
%theory of linear  multistep  methods for ordinary  differential  equations.

%Recently, non-local models as the fractional models have been
%widely  applied due to their applications in science and engineering,
%see  \cite{Diethelm-B10,MetKla00,Pod-B99}.
%There have  been many numerical methods for discretizing   fractional operators and
%resolving fractional differential equations (FDEs), see  \cite{Diethelm-B10,LiZeng2015,Pod-B99}.
%However, the  non-locality of fractional operators leads to much more expensive
%computational cost and memory requirement of the majority of numerical methods for FDEs,
%see   \cite{FordSim2001,JiangZZZ16}.
%The computational difficulty of the majority of the time-stepping methods
%for the time-fractional differential
%equations is that all values of numerical solutions are kept and used
%in order that the values of  the next time level can be obtained accurately, which requires
%a large storage and high computational cost.

A computational difficulty of numerical methods for time-dependent FDEs is
caused by the \emph{nonlocality} of the fractional operator \cite{BafHes17b,WangBasu12}.
The direct time-stepping method for the time-fractional operator
$k_{-\alpha}*u(t)=\int_0^tk_{-\alpha}(t-s)u(s)\dx[s]$
yields the discrete convolution as
\begin{equation}\label{dis-conv}
\sum_{k=0}^n\omega_{n-k}u_k,\qquad 0\leq n \leq n_T,
\end{equation}
which requires $O(n_T)$ active memory and $O(n_T^2)$ operations. For the case
$k_{\alpha}(t)=t^{\alpha}/\Gamma(\alpha)$, the convolution $k_{\alpha}*u(t)$ defines
the fractional integral of order $\alpha$ for $\alpha>0$
(or Riemann--Liouville fractional derivative of order $-\alpha$ for $\alpha<0$), see
\cite{SamKilM93,ZengTB2017}.
The direct calculation of \eqref{dis-conv} becomes computationally expensive when
it is applied to discretize the time variable  of high-dimensional time-fractional PDEs;
see \cite{YuPK16}.
Some progress on reducing the memory requirement and computational cost for calculating
\eqref{dis-conv} has been  made, and
we refer the reader to \cite{LubSch02,LopLubSch08,JingLi10,JiangZZZ16,McLean12,BafHes17b,YuPK16,ZengTB2017},
where the coefficients $\omega_n$ are obtained from interpolations
and the kernel function $t^{\alpha-1}/\Gamma(\alpha)$ in the fractional operator is approximated by
a sum-of-exponentials.
In the semi-implicit methods analyzed in this paper, the coefficients $\omega_n$ used in
\eqref{dis-conv} are obtained from the generating functions (see \eqref{fbdf} and \eqref{gngf};
and more generating functions can be found in \cite{Lub86}). Therefore,
the fast methods in \cite{LubSch02,LopLubSch08,JingLi10,JiangZZZ16,McLean12,BafHes17b,YuPK16,ZengTB2017}
are  difficult to be applied here.
In \cite{SchLopLub06}, a fast calculation of \eqref{dis-conv}
was developed, where $\omega_n$ is computed from generating functions that correspond
to the fractional backward difference formula (see \eqref{fbdf}) or   implicit Runge--Kutta methods.
A key idea is to re-express the weight $\omega_n$ in \eqref{dis-conv}  as a
  contour integral  of the form
\begin{equation}\label{eq:omega-n}
\omega_{n} = \frac{\tau}{2\pi i}\int_{\mathcal{C}}e_n(\lambda\tau)F_{\omega}(\lambda)\dx[\lambda],
\end{equation}
where $\tau$ is a time step size, and $e_n$ and $F_{\omega}$ depend on the specific discrete convolution
for the approximation of the corresponding integral operator, see Section \ref{sec2} for more details.
In  \cite{SchLopLub06},
the trapezoidal
rules based on the Talbot contour and the hyperbolic contour were proposed to approximate \eqref{eq:omega-n}.
Recently, Banjai et al. \cite{BanLopSch17}
have carefully analysed the Runge--Kutta-based quadrature method and
extended  the fast method proposed in \cite{SchLopLub06} to solve
 linear hyperbolic problems.  An equally important  goal of this paper is to develop a
fast algorithm to  calculate the discrete convolution \eqref{dis-conv} appearing
in the proposed semi-implicit methods.
%originating from the approximation
%of the fractional operator $k_{\alpha}*u(t)$.

The main contributions of this work are briefly summarised  below.
\begin{itemize}[leftmargin=*]
  \item i) A new class of semi-implicit methods for nonlinear FODEs is developed.
%  The new semi-implicit methods lead to constant coefficient matrices when
%  they are applied to solve nonlinear time-fractional FPDEs.
%  The constant coefficient matrix
%  makes us construct efficient preconditioners such that the computational cost can be
%  further reduced, see Example \ref{}.
  The stability of the present semi-implicit methods is investigated, and
  the stability criteria are given and numerically verified. Several  cases
  of the  presented semi-implicit methods are proved to be unconditionally stable and
  are verified by numerical  simulations; see Theorems \ref{thm:4-1}--\ref{thm:4-2},
  Table \ref{s4:stability-interval}, and  Figure  \ref{eg51fig2}.
  \item ii) We reformulate  the fast convolution in \cite{SchLopLub06}
  to calculate the discrete convolutions to  approximate the fractional operator.
  This modification  makes the present fast method  much   easier
  to  calculate for a wider class of discrete convolutions given by \eqref{dis-conv}
  with the coefficients $\omega_n$ defined by \eqref{fbdf} or \eqref{gngf};
  see  \cite{Lub86} for other choices of $\omega_n$.
  An error bound of the   fast convolution is obtained, depending only on the
discretization error of the contour integral,
see \eqref{eq:err}, Table \ref{s5:tb5},   and  Figure \ref{eg31fig4}.
The most important difference between our fast convolution approach and \cite{SchLopLub06} is that
a series of auxiliary ODEs are   solved  by using the \emph{backward Euler} method,
while the ODEs in \cite{SchLopLub06} were solved by a multi-step method
or an implicit Runge--Kutta method according to the specific discrete convolutions
to  approximate the fractional operator; see also \cite{BanLopSch17}.
\end{itemize}

%
%The truncation error of the present fast method
%includes two independent parts: one part is from the direct convolution to the approximation of the fractional
%operator, the other part is from the quadrature to approximate the contour integral;
%see \eqref{eq:err}.
We note that the use of the \emph{backward Euler} method to solve the auxiliary ODEs attributes
no additional errors to the whole truncation error of  our fast method as was shown in \cite{BanLopSch17,SchLopLub06},
which is verified by numerical simulations;
see \eqref{eq:err}, Figure \ref{eg31fig4}, and Table \ref{s5:tb5}.
For the fast method based on interpolation given in \cite{LubSch02,LopLubSch08,JingLi10,JiangZZZ16,McLean12,BafHes17b,YuPK16,ZengTB2017},
the  auxiliary ODEs  can be solved exactly, see the detailed
implementation in \cite{ZengTB2017}.

We present several numerical simulations  to verify the accuracy and efficiency
of the  fast method,
demonstrating significant savings in memory and cost, especially when it is applied to solve high-dimensional
time-fractional PDEs, see Example \ref{s5-eg-2}.

% We show that the fast method saves on memory requirements
%and computational cost significantly, especially when it is applied to solve high-dimensional
%time-fractional PDEs, see Example \ref{s5-eg-2}.

This paper is organized as follows.
A new class of semi-implicit methods for nonlinear FODEs is proposed in Section \ref{sec:IMEX}, and
the linear stability of these methods is also analyzed.
A new fast implementation of the semi-implicit methods is presented in Section \ref{sec2}.
Numerical simulations are given to verify the effectiveness of the
semi-implicit and fast methods in Section \ref{sec:numerical}
before the conclusion is given in the last section.

\section{Semi-implicit time-stepping methods}\label{sec:IMEX}
%In this section, we develop a class of semi-implicit time-stepping methods
%for a nonlinear FODE.
%The stability interval of the new methods is presented.
%For some special cases, the proposed method is unconditionally stable.
Consider the following  nonlinear FODE
\begin{equation}\label{s4:fiv}
{}_{C}D^{\alpha}_{0,t}u(t) = \lambda u(t) +  f(u(t),t),{\quad}u(0)=u_0,{\quad}t\in(0, T],
\end{equation}
where $0<\alpha\leq 1$, $\lambda\in \mathbb{C}, Re(\lambda)<0$,
and $f(u,t)$ is a nonlinear function with respect to $u$,
and ${}_{C}D^{\alpha}_{0,t}$ is the Caputo fractional derivative operator defined by
\begin{equation}\label{eq:cpt}
\cpt{\alpha}{u(t)}
=\frac{1}{\Gamma(1-\alpha)}\int_{0}^t(t-s)^{-\alpha}u'(s)\dx[s].
\end{equation}

%The main goal in this section is to
%develop a class of semi-implicit time-stepping methods for \eqref{s4:fiv}.
We also assume that the solution $u(t)$ to \eqref{s4:fiv}  satisfies
\begin{equation}\label{s4:solu}
u(t)-u(0) = \sum_{n=1}^{m}c_nt^{\sigma_n}
+ t^{\sigma_{m+1}}\tilde{u}(t),{\quad}0<\sigma_n<\sigma_{n+1},
\end{equation}
where $\tilde{u}(t)$ is uniformly bounded for $t\in[0,T]$.
The above assumption holds in real applications, see, for example,
\cite{Diethelm-B10,FordMorReb13,Luchko11,Lub83,Pod-B99},
 {in which $\sigma_n\in\{i+j\alpha,i,j\in Z^+\}$ if $f(u(t),t)$ is sufficiently smooth  for $t\in[0,T]$}.
\subsection{Derivation of the semi-implicit methods}
%In this subsection, we introduce a new class of semi-implicit time-stepping methods for
%the model problem \eqref{s4:fiv}. The stability interval of the new semi-implicit method
%is displayed.

Denote by $t_j=j\tau\,(j\geq 0)$   the grid points, where $\tau=T/n_T$ is the stepsize, and $n_T$
is a positive integer. Let $u_n=u(t_n)$ and denote
\begin{equation}\begin{aligned}\label{s4:Dalf}
D_{\tau}^{(\alpha,n,m,\sigma)}u=\frac{1}{\tau^{\alpha}}\sum_{j=0}^{n}\omega^{(\alpha)}_{n-j}(u_j-u_0)
+\frac{1}{\tau^{\alpha}}\sum_{j=1}^{m}w^{(\alpha)}_{n,j}(u_j-u_0),
\end{aligned}\end{equation}
{ where  the  quadrature weights $\omega^{(\alpha)}_{j}$  satisfy the following generating functions \cite{Lub86}
\begin{eqnarray}
\omega(p,\alpha,\tau,z)&=& \left(\frac{1}{\tau}\sum_{k=1}^p\frac{1}{k}(1-z)^k\right)^{\alpha}
=\sum_{n=0}^{\infty}\omega_{n}z^n=\frac{1}{\tau^{\alpha}}\sum_{n=0}^{\infty}\omega^{(\alpha)}_{n}z^n,\label{fbdf}
\end{eqnarray}
or
\begin{eqnarray}
\omega(p,\alpha,\tau,z)&=&\left(\frac{1-z}{\tau}\right)^{\alpha}
\sum_{k=1}^p g^{(\alpha)}_{k-1}(1-z)^{k-1}
=\sum_{n=0}^{\infty}\omega_{n}z^n=\frac{1}{\tau^{\alpha}}\sum_{n=0}^{\infty}\omega^{(\alpha)}_{n}z^n,\label{gngf}
\end{eqnarray}
in which $g^{(\alpha)}_{p-1}(1\leq p \leq 6)$ are given by (see \cite{GalGar08})
\begin{equation}\label{s2:gk}
\begin{aligned}
g^{(\alpha)}_0=& 1, {\qquad\quad}
g^{(\alpha)}_1=  \frac{\alpha}{2},{\qquad\quad}
g^{(\alpha)}_2=\frac{\alpha^2}{8} + \frac{5\alpha}{24}, \\
g^{(\alpha)}_3=& \frac{\alpha^3}{48} + \frac{5\alpha^2}{48} + \frac{\alpha}{8}, {\qquad\qquad}
g^{(\alpha)}_4= \frac{\alpha^4}{384} +\frac{5\alpha^3}{192} + \frac{97\alpha^2}{1152}
+ \frac{251\alpha}{2880},\\
g^{(\alpha)}_5=&\frac{\alpha^5}{3840} + \frac{5\alpha^4}{1152} + \frac{61\alpha^3}{2304}
 + \frac{401\alpha^2}{5760} + \frac{19\alpha}{288}.
\end{aligned}\end{equation}
We refer   readers to \cite{Lub86} for other choices of generating functions.}

Once the quadrature weights  $\omega^{(\alpha)}_{j}$  are given, the starting weights $w^{(\alpha)}_{n,j}$
in \eqref{s4:Dalf} are chosen such  that
$$\sum_{j=0}^{n}\omega^{(\alpha)}_{n-j}u_j
+ \sum_{j=1}^{m}w^{(\alpha)}_{n,j}u_j
=\frac{\Gamma(\sigma_r+1)}{\Gamma(\sigma_r+1-\alpha)}n^{\sigma_r-\alpha}$$
for some $u(t)=t^{\sigma_r},\, {r=1,2,...,m}$.
We refer readers to \cite{DieFord06,Lub86,ZengZK17} for more information on how to determine
the starting weights and their properties.

Using the relationship  ${}_{C}D^{\alpha}_{0,t}u(t)=k_{-\alpha}*(u-u(0))(t)$ and \eqref{s4:solu},
we can apply the fractional linear multi-step method (FLMM)  \eqref{s4:Dalf}  to  discretize the Caputo
fractional derivative operator in \eqref{s4:fiv}, which yields
\begin{equation}\label{s4:eq-1}
D_{\tau}^{(\alpha,n,m,\sigma)}u = \lambda u_n +  f_n
+ O(\tau^pt_n^{\sigma_{m+1}-p-\alpha})+ O(\tau^{\sigma_{m+1}+1}t_n^{-\alpha-1}),
\end{equation}
where $f_n = f(u_n,t_n)$ and $D_{\tau}^{(\alpha,n,m,\sigma)}$ is defined by \eqref{s4:Dalf}.

Let $U_n$ be the approximate solution of $u(t_n)$.  From \eqref{s4:eq-1},
we derive the following fully implicit method
\begin{equation}\label{s4:IM}
D_{\tau}^{(\alpha,n,m,\sigma)}U = \lambda U_n  +  f(U_n,t_n),
\end{equation}
where $D_{\tau}^{(\alpha,n,m,\sigma)}$ is defined by \eqref{s4:Dalf}.
We present \eqref{s4:IM} in order that we can compare it with the
semi-implicit method developed in the following  section.

Cao et al. \cite{CaoZZK16} proposed two methods to linearize the
nonlinear term $f(u_n,t_n)$ in \eqref{s4:eq-1} (see Eq. (2.31) in \cite{CaoZZK16}).
We list the two linearization approaches below:
\begin{itemize}
  \item Extrapolation with correction terms
\begin{equation}\label{s4:extrapolation}
f_n =  2f_{n-1} -f_{n-2}+ \sum_{j=1}^{m_f} w^{(f)}_{n,j}(f_j - f_0)
+O(\tau^2t_n^{\delta_{m_f+1}-2}),
\end{equation}
where $0<\delta_{r}<\delta_{r+1}$, $\delta_{r}\in \{\sigma_k\}\cup\{\sigma_k-\alpha\}$,
and $w^{(f)}_{n,j}$ are chosen such that
$f_n =  2f_{n-1} -f_{n-2}+ \sum_{j=1}^{m_f} w^{(f)}_{n,j}f_j$ for $f=t^{\delta_k}$,
 $1\leq k \leq m_f$.
  \item Taylor expansion  with corrections
\begin{equation}\label{s4:taylor}
\begin{aligned}
f_n \approx &  f_{n-1} + \tau \px[t]f(u_{n-1},t_{n-1})
+ \sum_{j=1}^{m_1} W^{(1)}_{n,j}(f_j - f_0)\\
&+ \px[u]f(u_{n-1},t_{n-1}) \Big(u_n-u_{n-1}+\sum_{j=1}^{m_2} W^{(2)}_{n,j}(u_j - u_0)\Big),
\end{aligned}\end{equation}
where the starting weights  $W^{(1)}_{n,j}$ and $W^{(2)}_{n,j}$ are
not used in this paper and the interested readers can refer to \cite{CaoZZK16}.
\end{itemize}

Compared with \eqref{s4:taylor}, the first approach \eqref{s4:extrapolation} is much simpler.
We will make a modification of
\eqref{s4:extrapolation} and \eqref{s4:taylor} to derive the new semi-implicit methods.
%However, both the two methods reduce to the same approach for the linear model problem, i.e.,  $f(u,t)=\rho u, \rho\leq 0$.
Here, we mainly focus on a modification of \eqref{s4:extrapolation},
which is presented below
%\begin{equation}\label{s4:extrapolation-2nd}
%\begin{aligned}
%f_n =& f_n - E^n_q(f)+ \sum_{j=1}^{m_f} w^{(f)}_{n,j}(f_j - f_0)
%-\kappa \Big(E^n_q(u) - \sum_{j=1}^{m_u} w^{(u)}_{n,j}(u_j - u_0)\Big)\\
%&+O(\tau^qt_n^{\delta_{m_f+1}-q})+O(\tau^qt_n^{\sigma_{m_u+1}-q}),
%\end{aligned}\end{equation}
\begin{equation}\label{s4:extrapolation-2nd}
\begin{aligned}
f_n =& f_n - E^{n,m_f,\delta}_q(f)
-\kappa E^{n,m_u,\sigma}_q(u)
+O(\tau^qt_n^{\delta_{m_f+1}-q})+O(\tau^qt_n^{\sigma_{m_u+1}-q}),
\end{aligned}\end{equation}
where $\kappa$ is a  constant that may depend on $\px[u]f(u,t)$,
and $E^{n,m,\sigma}_q(u)$ is given by
\begin{equation}\label{s4:Ep-m}
E^{n,m,\sigma}_q(u)=\begin{aligned}
& E^n_q(u)- \sum_{j=1}^{m}w^{(u)}_{n,j}(u_j - u_0),
\end{aligned}\end{equation}
where  $\{w^{(u)}_{n,j}\}$ are chosen such that
$E^{n,m,\sigma}_q(u)=0$ for
 $u=t^{\sigma_r},r=1,2,\cdots,m$. Here $E^n_q(u)$ is
 a $q$th-order perturbation that is defined by
\begin{equation}\label{s4:Ep}
E^n_q(u)=\left\{\begin{aligned}
& u_n-u_{n-1},&  q = 1,\\
& u_n-2u_{n-1} +u_{n-2},&q=2.
%&u_n-3u_{n-1} +3u_{n-2}-u_{n-3},&q=3.
\end{aligned}\right.\end{equation}

We can  interpret $E^n_q(u)$ as a penalty term that balances the stability and
accuracy of the proposed method.
Note that $E^n_q(u)=O(\tau^q)$ if $u(t)$ is a smooth function for $t\in[0,T]$.
Higher-order perturbations $E^n_q(u)=O(\tau^q)$ of order
$ {q\geq 3}$ can be constructed,
which are not investigated here.
In real applications, the solution to the considered FDE is often non-smooth, and therefore
 correction terms $\sum_{j=1}^{m_u} w^{(u)}_{n,j}(u_j - u_0)$ are added in
\eqref{s4:Ep-m}, such that
%\begin{equation}\label{eq:Eqnm}
%E^{n,m}_q(u)=E^n_q(u) - \sum_{j=1}^{m}w^{(u)}_{n,j}(u_j - u_0)
%\end{equation}
$E^{n,m,\sigma}_q(u)$
has smaller magnitude than $E^n_q(u)$.
%that yields smaller truncation errors of the new semi-implicit method.
%The starting weights $\{w^{(u)}_{n,j}\}$ are chosen such that
%$E^n_q(u) - \sum_{j=1}^{m_u}w^{(u)}_{n,j}(u_j - u_0)=0$ for
% $u=t^{\sigma_r}\,(0<\sigma_{r-1}<\sigma_r)$.

Combining \eqref{s4:eq-1} and \eqref{s4:extrapolation-2nd}, we have
%\begin{equation}\begin{aligned}\label{s4:IMEX-0}
%D_{\tau}^{(-\alpha,n,m,\sigma)}u
%=& \lambda u_n +  f_n - E^n_q(f)+ \sum_{j=1}^{m_f} w^{(f)}_{n,j}(f_j - f_0)\\
%&-\kappa \Big(E^n_q(u) - \sum_{j=1}^{m_u} w^{(u)}_{n,j}(u_j - u_0)\Big)+R^n,
%\end{aligned}\end{equation}
\begin{equation}\begin{aligned}\label{s4:IMEX-0}
D_{\tau}^{(\alpha,n,m,\sigma)}u
=& \lambda u_n +  f_n - E^{n,m_f,\delta}_q(f)
-\kappa E^{n,m_u,\sigma}_q(u)+R^n,
\end{aligned}\end{equation}
where the truncation error $R^n$ satisfies
\begin{equation}\begin{aligned}\label{s4:Rn}
R^n=O(\tau^pt_n^{\sigma_{m+1}-p-\alpha})+O(\tau^{\sigma_{m+1}+1}t_n^{-\alpha-1})
+O(\tau^qt_n^{\delta_{m_f+1}-q})+O(\tau^qt_n^{\sigma_{m_u+1}-q}).
\end{aligned}\end{equation}

%Combining \eqref{s4:eq-1} and \eqref{s4:eq-5}, we derive the following
%linearised semi-implicit discretization
%\begin{equation}\begin{aligned}\label{s4:IMEX-0}
%\frac{1}{\tau^{\alpha}}\sum_{j=0}^{n}&\omega_{n-j}(u_j-u_0)
%= (\lambda -\kappa)u_n +  f(u_{n-1},t_{n-1}) + \tau \px[t]f(u_{n-1},t_{n-1})\\
%& - \px[u]f(u_{n-1},t_{n-1}) u_{n-1}  + (\px[u]f(u_{n-1},t_{n-1}) + \kappa)(2u_{n-1}-u_{n-2})\\
% &+\sum_{j=1}^{m_1} w^{(1)}_{n,j}(f_j - f_0)
% +\px[u]f(u_{n-1},t_{n-1})\sum_{j=1}^{m_2} w^{(2)}_{n,j}(u_j - u_0) + R^n.
%\end{aligned}\end{equation}

From \eqref{s4:IMEX-0}, we obtain the following semi-implicit scheme
\begin{equation} \label{s4:IMEX}\begin{aligned}
D_{\tau}^{(\alpha,n,m,\sigma)}U=
 \lambda U_n +  F_n - E^{n,m_f,\delta}_q(F) -\kappa E^{n,m_u,\sigma}_q (U),
\end{aligned} \end{equation}
%\begin{equation} \label{s4:IMEX}
%D_{\tau}^{(-\alpha,n,m,\sigma)}U=
% \lambda U_n +  F_n - E^n_2(F)+ \sum_{j=1}^{m_f} w^{(f)}_{n,j}(F_j - F_0)
%-\kappa \Big(E^n_q(U) - \sum_{j=1}^{m_u} w^{(u)}_{n,j}(U_j - U_0)\Big),
%\end{equation}
where $F_n= f(U_n,t_n)$,  $E^{n,m,\sigma}_q$ is defined by \eqref{s4:Ep-m}, and $D_{\tau}^{(\alpha,n,m,\sigma)}$ is defined by \eqref{s4:Dalf}.

We can also make a modification of \eqref{s4:taylor} to derive another semi-implicit method,
i.e., we just need to replace $\px[u]f(u_{n-1},t_{n-1}) u_n$ in \eqref{s4:taylor} with
$$-\kappa u_n + (\kappa+\px[u]f(u_{n-1},t_{n-1})) \big(u_n-E^{n,m_u,\sigma}_q (u)\big).$$
From \eqref{s4:extrapolation-2nd} and the above expression, we obtain the second semi-implicit
method
\begin{equation}\begin{aligned}\label{s4:IMEX2}
D_{\tau}^{(\alpha,n,m,\sigma)}&U= (\lambda-\kappa) U_n +  F_{n-1}  + \tau \px[t]f(U_{n-1},t_{n-1})
+ \sum_{j=1}^{m_1} W^{(1)}_{n,j}(F_j - F_0)\\
&+ \px[u]f(U_{n-1},t_{n-1}) \Big(-U_{n-1}+\sum_{j=1}^{m_2} W^{(2)}_{n,j}(U_j - U_0)\Big)\\
&+\big(\kappa+\px[u]f(U_{n-1},t_{n-1})\big) \Big(U_n-E^{n,m_u,\sigma}_q (U)\Big).
\end{aligned}\end{equation}

If all the correction terms in \eqref{s4:IMEX} and \eqref{s4:IMEX2} are omitted,
both modified methods  reduce to the same method for a linear problem for $q=2$.
Compared with the first method \eqref{s4:IMEX},
the second method  \eqref{s4:IMEX2} seems much more complicated and
involves using partial derivatives of the nonlinear term $f(u,t)$.
In the remaining sections of  this paper, we mainly focus on the theoretical analysis of
the first method \eqref{s4:IMEX} and its application to solving nonlinear FDEs.
\subsection{Linear stability}
We investigate the linear stability of the method \eqref{s4:IMEX} in this subsection.
Let $f(u,t) = \rho u$.  For simplicity, we first drop
all the correction terms since they do not affect the stability of
the proposed method under some suitable conditions. In such a case,   \eqref{s4:IMEX} becomes
\begin{equation}\begin{aligned}\label{s4:IMEX-3}
\frac{1}{\tau^{\alpha}}\sum_{j=0}^{n}\omega^{(\alpha)}_{n-j}(U_j-U_0)
= (\lambda +\rho)U_n - (\rho+\kappa) E_q^n(U),
\end{aligned}\end{equation}
where $\omega^{(\alpha)}_{n}$ satisfies
$\omega(p,\alpha,1,z)=\sum_{n=0}^{\infty}\omega^{(\alpha)}_{n}z^n$, and
$\omega(p,\alpha,\tau,z)$ is defined by \eqref{fbdf} or \eqref{gngf}.

In the following, we  analyze the stability of \eqref{s4:IMEX-3} for $q=2$.
 Eq.  \eqref{s4:IMEX-3}  becomes
\begin{equation}\label{s4:IMEX-2}
\frac{1}{\tau^{\alpha}}\sum_{j=0}^{n}\omega^{(\alpha)}_{n-j}(U_j-U_0)
= (\lambda -\kappa)U_n + (\rho + \kappa)(2U_{n-1}-U_{n-2}).
\end{equation}
For $q=1$, we need only to replace $(\rho + \kappa)(2U_{n-1}-U_{n-2})$
in \eqref{s4:IMEX-2} with $(\rho + \kappa)U_{n-1}$.

Define
\begin{equation}\begin{aligned}\label{s4:stability-p}
\mathbb{S}= \mathbb{C}\setminus
\left\{\xi\big|\xi^{\alpha} = \frac{\omega(p,\alpha,1,z)}{(\lambda+\rho)-(\rho+\kappa)(1-z)^q},
|z|\leq 1\right\} ,
\end{aligned}\end{equation}
where  $\omega(p,\alpha,\tau,z)$ is defined by \eqref{fbdf} or \eqref{gngf}.
%\begin{equation}\begin{aligned}\label{s4:stability}
%\tau^{\alpha}\neq \frac{\tau^{\alpha}\omega(p,-\alpha,\tau,z)}
%{(\lambda-\kappa)+(\rho+\kappa)(2z-z^2)}.
%\end{aligned}\end{equation}

According to \cite{CaoZZK16,Lub86b}, we have the following two theorems, the proofs of which are given
in Appendix \ref{appenix-A}.
\begin{theorem}\label{thm:4-1}
If $\tau\in\mathbb{S}$ with   $\mathbb{S}$   defined by \eqref{s4:stability-p},
then  method \eqref{s4:IMEX-3} for the model problem
${}_{C}D^{\alpha}_{0,t}u(t) = (\lambda +\rho)u(t)$ is stable.
\end{theorem}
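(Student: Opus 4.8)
The plan is to run a generating-function (\emph{$z$-transform}) stability analysis in the spirit of Lubich \cite{Lub86b} (see also \cite{CaoZZK16}). With $f(u,t)=\rho u$, the substitution $E_q^n(U)=U_n-\bigl(U_n-E_q^n(U)\bigr)$ recasts \eqref{s4:IMEX-3} for the model problem as
\begin{equation*}
\frac{1}{\tau^{\alpha}}\sum_{j=0}^{n}\omega^{(\alpha)}_{n-j}(U_j-U_0)=(\lambda-\kappa)U_n+(\rho+\kappa)\bigl(U_n-E_q^n(U)\bigr),
\end{equation*}
valid for $n\ge q$, the first $q$ values being supplied by a starting procedure. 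I would introduce the formal power series $\widehat U(z)=\sum_{n\ge 0}U_nz^n$ and apply the $z$-transform, using that: (i) $\sum_{n\ge 0}\omega^{(\alpha)}_nz^n=\omega(p,\alpha,1,z)$ by \eqref{fbdf} (resp.\ \eqref{gngf}), so the Cauchy product sends the discrete convolution $\sum_j\omega^{(\alpha)}_{n-j}U_j$ to $\omega(p,\alpha,1,z)\widehat U(z)$; (ii) $\sum_{n\ge 0}\bigl(\sum_{k=0}^{n}\omega^{(\alpha)}_k\bigr)z^n=\omega(p,\alpha,1,z)/(1-z)$; and (iii) $U_n-E_q^n(U)$ has generating function $\bigl(1-(1-z)^q\bigr)\widehat U(z)$, up to a polynomial correction carrying the starting data. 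Together with the elementary identity $(\lambda-\kappa)+(\rho+\kappa)\bigl(1-(1-z)^q\bigr)=(\lambda+\rho)-(\rho+\kappa)(1-z)^q$, the recursion collapses to
\begin{equation*}
\delta(z)\,\widehat U(z)=\frac{U_0}{\tau^{\alpha}}\,\frac{\omega(p,\alpha,1,z)}{1-z}+G(z),\qquad
\delta(z):=\frac{1}{\tau^{\alpha}}\omega(p,\alpha,1,z)-\Bigl[(\lambda+\rho)-(\rho+\kappa)(1-z)^q\Bigr],
\end{equation*}
with $G$ a polynomial built from the starting values, which does not affect stability.

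The decisive point is that $\delta(z)=0$ is equivalent to $\tau^{\alpha}=\omega(p,\alpha,1,z)\big/\bigl[(\lambda+\rho)-(\rho+\kappa)(1-z)^q\bigr]$, i.e.\ to $\tau$ lying in the set excised from $\mathbb{C}$ in the definition \eqref{s4:stability-p} of $\mathbb{S}$; hence the hypothesis $\tau\in\mathbb{S}$ states exactly that $\delta(z)\ne0$ for all $|z|\le1$. Now $\omega(p,\alpha,1,z)$ is holomorphic and zero-free on the closed unit disk away from $z=1$, with $\omega(p,\alpha,1,z)\sim c(1-z)^{\alpha}$ as $z\to1$ — this is precisely the admissibility of the generating functions \eqref{fbdf}--\eqref{gngf} used in \cite{Lub86} — and $\delta(1)=-(\lambda+\rho)\ne0$. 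Consequently $1/\delta$ is holomorphic on a neighbourhood of the closed disk punctured at $z=1$ and bounded at $z=1$, so that $\widehat U$ continues holomorphically across the unit circle at every point $z\ne1$ and has at worst an integrable $(1-z)^{\alpha-1}$-type singularity at $z=1$.

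It then remains only to read off the Taylor coefficients of $\widehat U$: Cauchy's estimate on a circle $|z|=r>1$ controls the part of $\widehat U$ that is holomorphic there, while a local expansion near $z=1$ (as in \cite{Lub86b}) shows the boundary-singular part contributes coefficients of size $O(n^{-\alpha})$; together with the trivially bounded starting data this yields $\sup_n\abs{U_n}<\infty$, the asserted stability. I expect the only real obstacle to be this last step — the behaviour of $\widehat U$ near $z=1$, where $\omega(p,\alpha,1,z)$ vanishes — together with checking that $G$ and the starting contributions are harmless; the $z$-transform manipulations and the algebra identifying the zero set of $\delta$ with the set excised in \eqref{s4:stability-p} are routine.
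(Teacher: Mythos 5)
Your proposal is correct and takes essentially the same route as the paper: a generating-function (z-transform) argument that reduces stability of \eqref{s4:IMEX-3} to the non-vanishing of $\omega(p,\alpha,1,z)-\tau^{\alpha}\bigl[(\lambda+\rho)-(\rho+\kappa)(1-z)^q\bigr]$ on the closed unit disk, which is exactly the condition $\tau\in\mathbb{S}$ in \eqref{s4:stability-p}. The only difference is in the last step, and it is cosmetic: the paper obtains the decay of the coefficients (hence $U_n\to0$) by quoting the stability theory of \cite{Lub86b} after noting $H_n=O(n^{-\alpha})$, whereas you sketch that step directly via the $(1-z)^{\alpha-1}$ singularity analysis at $z=1$, which is the same mechanism.
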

{
\begin{theorem}\label{thm:4-1-b}
If $\tau\in\mathbb{S}$ with   $\mathbb{S}$   defined by \eqref{s4:stability-p},
$\sigma_m<\alpha+p$,   $\sigma_{m_u},\delta_{m_f}<q$,
then  method \eqref{s4:IMEX} for the model problem
${}_{C}D^{\alpha}_{0,t}u(t) = (\lambda +\rho)u(t)$ is stable.
\end{theorem}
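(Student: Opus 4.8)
The plan is to reduce the analysis to the correction-free scheme \eqref{s4:IMEX-3}, already covered by Theorem~\ref{thm:4-1}, and to treat the correction terms as a uniformly bounded right-hand-side perturbation, in the spirit of the Lubich-type arguments of \cite{Lub86b,CaoZZK16}. As in the proof of Theorem~\ref{thm:4-1} it suffices to take $f(u,t)=\rho u$, so that $F_n=\rho U_n$ and $E^{n,m_f,\delta}_q(F)=\rho E^{n,m_f,\delta}_q(U)$. Substituting this into \eqref{s4:IMEX}, expanding the terms $E^{n,\cdot,\cdot}_q$ via \eqref{s4:Ep-m}, and collecting the contributions of the starting weights, one would rewrite the scheme applied to the model problem as
\[
\frac{1}{\tau^{\alpha}}\sum_{j=0}^{n}\omega^{(\alpha)}_{n-j}(U_j-U_0)
= (\lambda+\rho)U_n-(\rho+\kappa)E^n_q(U)+G^n,
\]
that is, exactly \eqref{s4:IMEX-3} with the extra inhomogeneity
\[
G^n := \rho\sum_{j=1}^{m_f}w^{(f)}_{n,j}(U_j-U_0)
+\kappa\sum_{j=1}^{m_u}w^{(u)}_{n,j}(U_j-U_0)
-\frac{1}{\tau^{\alpha}}\sum_{j=1}^{m}w^{(\alpha)}_{n,j}(U_j-U_0).
\]
The structural point is that $G^n$ couples only to the fixed, finite set of boundary unknowns $U_0,U_1,\dots,U_{\widehat m}$, $\widehat m=\max\{m,m_f,m_u\}$, which are determined by (and bounded linearly in terms of) the data through the well-posed startup system.

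Next I would bound $G^n$ uniformly in $n$ using the standard asymptotics of starting weights (see \cite{Lub86,DieFord06,ZengZK17}). Each of $w^{(\alpha)}_{n,j}$, $w^{(u)}_{n,j}$, $w^{(f)}_{n,j}$ is defined by a generalized Vandermonde system whose inverse is independent of $n$, so each weight is bounded by a constant times the largest residual it is required to cancel: the $p$th-order quadrature residual of \eqref{fbdf}/\eqref{gngf} for $w^{(\alpha)}_{n,j}$, giving $|w^{(\alpha)}_{n,j}|\le C\,n^{\sigma_m-\alpha-p}$ up to lower-order boundary terms, and the residual of the $q$th-order perturbation $E^n_q$ for the other two, giving $|w^{(u)}_{n,j}|\le C\,n^{\sigma_{m_u}-q}$ and $|w^{(f)}_{n,j}|\le C\,n^{\delta_{m_f}-q}$, again up to lower-order terms that vanish as $n\to\infty$. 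Under the hypotheses $\sigma_m<\alpha+p$ and $\sigma_{m_u},\delta_{m_f}<q$ all of these exponents are strictly negative, hence the three families of weights are uniformly bounded (and in fact tend to $0$), and therefore $\sup_n|G^n|\le C\bigl(|U_0|+\max_{1\le j\le\widehat m}|U_j|\bigr)<\infty$ with $C$ independent of $n$.

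Finally I would invoke Theorem~\ref{thm:4-1}. The hypothesis $\tau\in\mathbb S$, with $\mathbb S$ as in \eqref{s4:stability-p}, says precisely that the symbol $\tau^{-\alpha}\omega(p,\alpha,1,z)-(\lambda+\rho)+(\rho+\kappa)(1-z)^q$ does not vanish on the closed unit disc; combined with the integrable branch behaviour of $(\,\cdot\,)^{\alpha}$ at $z=1$, the Paley--Wiener argument underlying Theorem~\ref{thm:4-1} yields that the fundamental solution $\{k_n\}$ of \eqref{s4:IMEX-3} lies in $\ell^1$. Writing the solution of the perturbed recursion by discrete variation of constants, $U_n=U_n^{\mathrm{hom}}+(k*G)_n$, where $U^{\mathrm{hom}}$ solves \eqref{s4:IMEX-3} with the given boundary data (bounded uniformly in $n$ by Theorem~\ref{thm:4-1}) and $\|k*G\|_{\ell^\infty}\le\|k\|_{\ell^1}\|G\|_{\ell^\infty}<\infty$, one concludes that $\sup_n|U_n|$ is finite and depends continuously on the data, which is the asserted stability.

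The step I expect to be the main obstacle is establishing $\{k_n\}\in\ell^1$ for the correction-free scheme in the presence of the branch point of $(\,\cdot\,)^{\alpha}$ at $z=1$; this is exactly where the proof must carefully mirror the analysis of \cite{Lub86b,CaoZZK16}, and it is because $\{k_n\}\in\ell^1$ that the correction weights need only be bounded rather than summable, which is what makes the mild hypotheses $\sigma_m<\alpha+p$ and $\sigma_{m_u},\delta_{m_f}<q$ sufficient. A secondary but routine point is the linear bookkeeping of the coupled boundary unknowns $U_0,\dots,U_{\widehat m}$, which must be carried through the whole estimate so that the final bound is genuinely a stability estimate and not merely a boundedness statement.
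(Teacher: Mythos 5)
Your proposal is correct and follows essentially the same route as the paper: the paper likewise derives the starting-weight asymptotics $w^{(\alpha)}_{n,j}=O(n^{\sigma_m-\alpha-p})+O(n^{-\alpha-1})$, $w^{(u)}_{n,j}=O(n^{\sigma_{m_u}-q})$, $w^{(f)}_{n,j}=O(n^{\delta_{m_f}-q})$ from the Vandermonde-type systems, absorbs the correction terms into an extra forcing contribution (its $\hat H(z)$ in the generating-function domain, your $G^n$ in the time domain) that vanishes as $n\to\infty$ under $\sigma_m<\alpha+p$, $\sigma_{m_u},\delta_{m_f}<q$, and then concludes from the nonvanishing of the symbol on $|z|\le1$ exactly as in Theorem \ref{thm:4-1}. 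The only cosmetic difference is the final step, where the paper cites \cite{Lub86b} while you make the same Paley--Wiener mechanism explicit via Wiener's lemma ($k\in\ell^1$) and discrete variation of constants; since your $G^n\to0$ and $k\in\ell^1$, you in fact recover the paper's stronger conclusion $U_n\to0$ rather than mere boundedness.
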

}

%\begin{remark}\label{rmk2-1}
%%In Theorem \ref{thm:4-1}, we present the stability criteria for   \eqref{s4:IMEX-3}.
%For the semi-implicit method \eqref{s4:IMEX}
%with correction terms, if $\tau  \in \mathbb{S}$ with   $\mathbb{S}$   defined by \eqref{s4:stability-p},
%$\sigma_m<\alpha+p$,   $\sigma_{m_u},\delta_{m_f}<q$, then method
% \eqref{s4:IMEX} is stable.
%\end{remark}

\begin{remark}
Let $\alpha>0$  be a rational number, i.e., $\alpha=n/r$, $r,n$ are positive integers
and $\mathrm{gcd}(r,n)=1$. Define the polynomial
$$P(\hat{z})={\hat{z}}^n\sum_{k=1}^p g^{(\alpha)}_{k-1}{\hat{z}}^{r(k-1)}
+\tau^{\alpha}(\rho+\kappa)\hat{z}^{qr}-\tau^{\alpha}(\lambda+\rho),
{\quad}\hat{z}=(1-z)^{1/r},{\quad}|z|\leq 1,$$
where $g^{(\alpha)}_{k}$ are defined by \eqref{s2:gk}.
If the generating function \eqref{gngf} is used and
$P(\hat{z})$ has no root in the domain defined by
$\hat{z}=(1-z)^{1/r},|z|\leq 1$,  then the method \eqref{s4:IMEX-3} is stable.
\end{remark}

Several special cases of \eqref{s4:stability-p} are given in the following theorem,
the proof of which is provided in Appendix \ref{appenix-B}.
\begin{theorem}\label{thm:4-2}
Suppose that $\lambda<0,\rho\leq 0$.
For $p=1,2$, the method \eqref{s4:IMEX-3} is unconditionally stable  if
  \begin{equation} \label{thm:eq:4-2}
\kappa>\left\{\begin{aligned}
& ({\lambda-\rho})/{2},&{\quad}  q = 1,\\
&  ({\lambda-3\rho})/{4},&{\quad} q=2.
%&-({\lambda+3\rho})/{2}>\kappa>({\lambda-7\rho})/{8},&{\quad} q=3.
\end{aligned}\right.\end{equation}
\end{theorem}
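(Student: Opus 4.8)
The plan is to read unconditional stability off directly from the description of $\mathbb{S}$ in \eqref{s4:stability-p}. By Theorem~\ref{thm:4-1} it suffices to prove $(0,\infty)\subseteq\mathbb{S}$, and since $\tau\mapsto\tau^\alpha$ maps $(0,\infty)$ onto $(0,\infty)$, \eqref{s4:stability-p} makes this equivalent to the statement that \emph{for every $z$ with $|z|\le1$ the complex numbers $A(z):=\omega(p,\alpha,1,z)$ and $B(z):=(\lambda+\rho)-(\rho+\kappa)(1-z)^q$ are not positive real multiples of one another.} I would set $\eta:=1-z$, so $\eta$ runs over the closed disc $D=\{\,|\eta-1|\le1\,\}$ (hence $\mathrm{Re}\,\eta\in[0,2]$, $|\eta|\le2$), and write $a:=\lambda+\rho<0$, $b:=\rho+\kappa$, so $B(z)=a-b\eta^{\,q}$. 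Adding $\rho$ to \eqref{thm:eq:4-2} shows the hypothesis says exactly $b>a/2^{\,q}$ (that is, $b>a/2$ for $q=1$ and $b>a/4$ for $q=2$), which is the same as $B(-1)=a-b2^{\,q}<0$; this is plainly necessary, since $A(-1)$ is a positive real number. The case $\eta=0$ ($z=1$) is trivial, as then $A=0$ and $B=a\ne0$.

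The core of the proof is two elementary properties of the numerator, valid for $p\in\{1,2\}$, for both generating functions \eqref{fbdf} and \eqref{gngf}, and for $0<\alpha\le1$: \textbf{(a)}\ $\mathrm{Re}\,A(z)\ge0$ whenever $|z|\le1$; \textbf{(b)}\ $\mathrm{Im}\,A(z)$ has the same sign as $\mathrm{Im}\,\eta=-\mathrm{Im}\,z$, and $A(z)$ is a positive real number when $\eta>0$. For $p=1$, $A(z)=\eta^{\,\alpha}$ and both follow from $\mathrm{Re}\,\eta\ge0$, which gives $\arg A=\alpha\arg\eta\in[-\alpha\pi/2,\alpha\pi/2]$. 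For $p=2$ with \eqref{fbdf}, $A(z)=\delta_2(z)^{\,\alpha}$ with $\delta_2(z)=\tfrac{3}{2}-2z+\tfrac{1}{2}z^2$, and the identities $\mathrm{Re}\,\delta_2(e^{i\theta})=(1-\cos\theta)^2$ (so $\mathrm{Re}\,\delta_2\ge0$ on the disc by the minimum principle) and $\mathrm{Im}\,\delta_2(z)=\mathrm{Im}(z)\bigl(\mathrm{Re}(z)-2\bigr)$ give \textbf{(a)} and \textbf{(b)} after raising to the power $\alpha$. For $p=2$ with \eqref{gngf}, $A(z)=\eta^{\,\alpha}\bigl(1+\tfrac{\alpha}{2}\eta\bigr)$; writing $\eta=re^{i\phi}$ with $0<r\le2\cos\phi$, one has $\mathrm{Im}\,A=r^{\alpha}\sin(\alpha\phi)+\tfrac{\alpha}{2}r^{1+\alpha}\sin((1+\alpha)\phi)$, whose two summands are positive for $\phi\in(0,\pi/2)$, giving \textbf{(b)}, while $\mathrm{Re}\,A\ge r^{\alpha}\bigl(\cos(\alpha\phi)-\tfrac{\alpha}{2}r\bigr)\ge r^{\alpha}\bigl(\cos(\alpha\phi)-\alpha\cos\phi\bigr)\ge0$ using $|\cos((1+\alpha)\phi)|\le1$, $r\le2\cos\phi$, and concavity of $\cos$ on $[0,\pi/2]$, giving \textbf{(a)}.

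With \textbf{(a)}--\textbf{(b)} in hand I would split on the sign of $b$. If $b<0$, the hypothesis gives $|b|<|a|/2^{\,q}$, so $|B(z)-a|=|b|\,|\eta|^{\,q}\le2^{\,q}|b|<|a|$; hence $B(z)$ lies in the open disc $\{\,|\zeta-a|<|a|\,\}$, which, since $a<0$, sits in the open left half-plane, while \textbf{(a)} puts $A(z)$ in the closed right half-plane — so $A(z)=sB(z)$ with $s>0$ is impossible. If $b\ge0$: on the real segment $0<\eta\le2$ one has $A(z)=\eta^{\,\alpha}>0$ and $B(z)=a-b\eta^{\,q}\le a<0$, opposite in sign; and if $\mathrm{Im}\,\eta\ne0$ (take $\mathrm{Im}\,\eta>0$, the other sign following by conjugation) then $\mathrm{Re}\,\eta>0$, so $\arg\eta\in(0,\pi/2)$ and $\mathrm{Im}(\eta^{\,q})>0$ for $q\in\{1,2\}$, whence $\mathrm{Im}\,B(z)=-b\,\mathrm{Im}(\eta^{\,q})\le0$ while $\mathrm{Im}\,A(z)>0$ by \textbf{(b)} — again $A(z)=sB(z)$ with $s>0$ is impossible. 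This exhausts all $z$ with $|z|\le1$, so $(0,\infty)\subseteq\mathbb{S}$ and the method \eqref{s4:IMEX-3} is unconditionally stable.

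The step I expect to be the real work is establishing \textbf{(a)}--\textbf{(b)} for $p=2$: for $p=1$ they are a one-liner, but for $p=2$ a triangle-inequality estimate of $\arg A(z)$ can exceed $\pi/2$, so one must exploit the special structure — the explicit formulas for $\mathrm{Re}\,\delta_2(e^{i\theta})$ and $\mathrm{Im}\,\delta_2(z)$ in the \eqref{fbdf} case, and the inequality $\cos(\alpha\phi)\ge\alpha\cos\phi$ together with the constraint $r\le2\cos\phi$ defining $D$ in the \eqref{gngf} case. Everything else — the reduction to the statement that $A(z)$ and $B(z)$ are never positive multiples, the identification of \eqref{thm:eq:4-2} with $b>a/2^{\,q}$, and the two sign cases for $b$ — is routine once \textbf{(a)}--\textbf{(b)} are available.
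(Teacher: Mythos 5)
Your proposal is correct and follows essentially the same route as the paper's Appendix B proof: reduce unconditional stability (via Theorem \ref{thm:4-1}) to showing the quotient $\omega(p,\alpha,1,z)/\big[(\lambda+\rho)-(\rho+\kappa)(1-z)^q\big]$ is never a positive real number on $|z|\le 1$, and establish this by sign analysis of real and imaginary parts of numerator and denominator, splitting into a regime handled by real parts ($\rho+\kappa<0$, your disc estimate $|B-a|<|a|$ versus the paper's check that $V_1<0$) and a regime handled by imaginary parts off the real axis ($\kappa\ge-\rho$). The only differences are in the details: you work in polar coordinates in $\eta=1-z$, prove $\mathrm{Re}\,\omega\ge 0$ directly instead of citing \cite{WangVong14}, and write out explicitly the $p=1$, FBDF-$2$, and $q=1$ cases that the paper dismisses as ``similarly proved.''
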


In our numerical simulations, we will apply the second-order generalized
Newton-Gregory formula to discretize the fractional derivative operators
in the considered FDEs, i.e., $\omega(p,\alpha,\tau,z)$
is chosen as
$\tau^{\alpha}\omega(p,\alpha,\tau,z)=(1-z)^{\alpha}(1+\frac{\alpha}{2}-\frac{\alpha}{2}z)$.
Therefore, we first present the stability interval
of the method \eqref{s4:IMEX-3} in Table \ref{s4:stability-interval},
where we set $p=q=2$.
From Table  \ref{s4:stability-interval}, we have the following observations:
\begin{itemize}
  \item For a fixed fractional order $\alpha$, the stability interval increases
  as $\kappa$  increases.
  \item  For a small $\kappa$,
the stability interval may be very small when $\alpha$ tends to zero;
$\kappa=0$ corresponds to the extrapolation method in \cite{CaoZZK16}.
  \item There exists a $\kappa_0\geq 0$, such that the method \eqref{s4:IMEX-3}
  is stable for any $\tau>0$ and $\alpha\in(0,1]$ if $\kappa\geq \kappa_0$. For the case shown
  in Table  \ref{s4:stability-interval}, $\kappa_0 = 1.25$, which verifies \eqref{thm:eq:4-2}.
\end{itemize}
\begin{table}[!h]
\caption{Stability interval of \eqref{s4:IMEX-2} with a second-order generating function
$\omega(p,\alpha,\tau,z) = \tau^{-\alpha}(1-z)^{\alpha}(1+\frac{\alpha}{2}-\frac{\alpha}{2}z)$
for different fractional orders $\alpha$
and $\kappa$, $\lambda=-1,\rho=-2$.}\label{s4:stability-interval}
%\begin{center}
\centering\footnotesize
\begin{tabular}{|l|l|l|l|l|c|c|c|c|c|c|c|c|}
\hline
 $\kappa$ & $\alpha=0.1$ & $\alpha=0.2$ &  $\alpha=0.5$ &  $\alpha=0.9$    \\
 \hline
$0$    &$(0,5.31\times 10^{-7})$&$(0,1.59\times 10^{-3})$&$(0,1.80\times 10^{-1})$&$(0,6.83\times 10^{-1})$\\
$0.2$  &$(0,3.04\times 10^{-6})$&$(0,3.81\times 10^{-3})$&$(0,2.55\times 10^{-1})$&$(0,8.28\times 10^{-1})$\\
$0.4$  &$(0,2.51\times 10^{-5})$&$(0,1.10\times 10^{-2})$&$(0,3.89\times 10^{-1})$&$(0,1.05\times 10^{0} )$\\
$0.6$  &$(0,3.67\times 10^{-4})$&$(0,4.19\times 10^{-2})$&$(0,6.66\times 10^{-1})$&$(0,1.41\times 10^{0})$\\
$0.8$  &$(0,1.45\times 10^{-2}$ &$(0,2.63\times 10^{-1})$&$(0,1.39\times 10^{0} )$&$(0,2.12\times 10^{0})$\\
$1.0$  &$(0,5.19\times 10^0)$   &$(0,4.98\times 10^{0})$ &$(0,4.50\times 10^{0} )$&$(0,4.08\times 10^{0})$\\
$1.2$  &$(0,5.07\times 10^{7})$ &$(0,1.56\times 10^{4})$ &$(0,1.13\times 10^{2} )$&$(0,2.44\times 10^{1})$\\
$1.24$ &$(0,4.95\times 10^{14})$&$(0,4.86\times 10^{7})$ &$(0,2.81\times 10^{3} )$&$(0,1.46\times 10^{2})$\\
$\textbf{1.25}$ &$(0, \infty)  $ &$(0, \infty)  $&$(0, \infty)  $&$(0, \infty)  $\\
$1.26$ &$(0, \infty)  $&$(0, \infty)  $&$(0, \infty)  $&$(0, \infty)  $\\
$1.40$ &$(0, \infty)  $&$(0, \infty)  $&$(0, \infty)  $&$(0, \infty)  $\\
\hline
\end{tabular}
%\end{center}
\end{table}

Next, we plot the stability region of the method \eqref{s4:IMEX-2} under some restrictions.
Let $\rho=\gamma\lambda, \kappa = -\theta\rho=-\theta\gamma\lambda$, and $\xi=\tau^{\alpha}\lambda$.
Then, the stability domain  of the method \eqref{s4:IMEX-3} can be expressed by
\begin{equation}\label{s4:stability-p-2}\begin{aligned}
\mathbb{D}=\mathbb{C}\setminus
\left\{\xi\big| \xi=\frac{\omega(p,\alpha,1,z)}
{(1+\gamma)-\gamma(1-\theta)(1-z)^q},|z|\leq 1\right\}.
\end{aligned}\end{equation}

Figure \ref{fig-stability} displays the stability region defined by \eqref{s4:stability-p-2}
when $Re(\kappa)<Re(\lambda-3\rho)/4$ and
$\omega(2,\alpha,1,z)=(1-z)^{\alpha}(1+\frac{\alpha}{2}-\frac{\alpha}{2}z)$.
We   see that as $\theta$ (or $Re(\kappa)$) increases, the stability region (the shaded area)
becomes larger for a fixed $\alpha$.
%For a fixed $\kappa$, the stability region also
%becomes larger as the fractional order $\alpha$ increases.
\begin{figure}[!h]
\begin{center}
\begin{minipage}{0.4\textwidth}\centering
\epsfig{figure=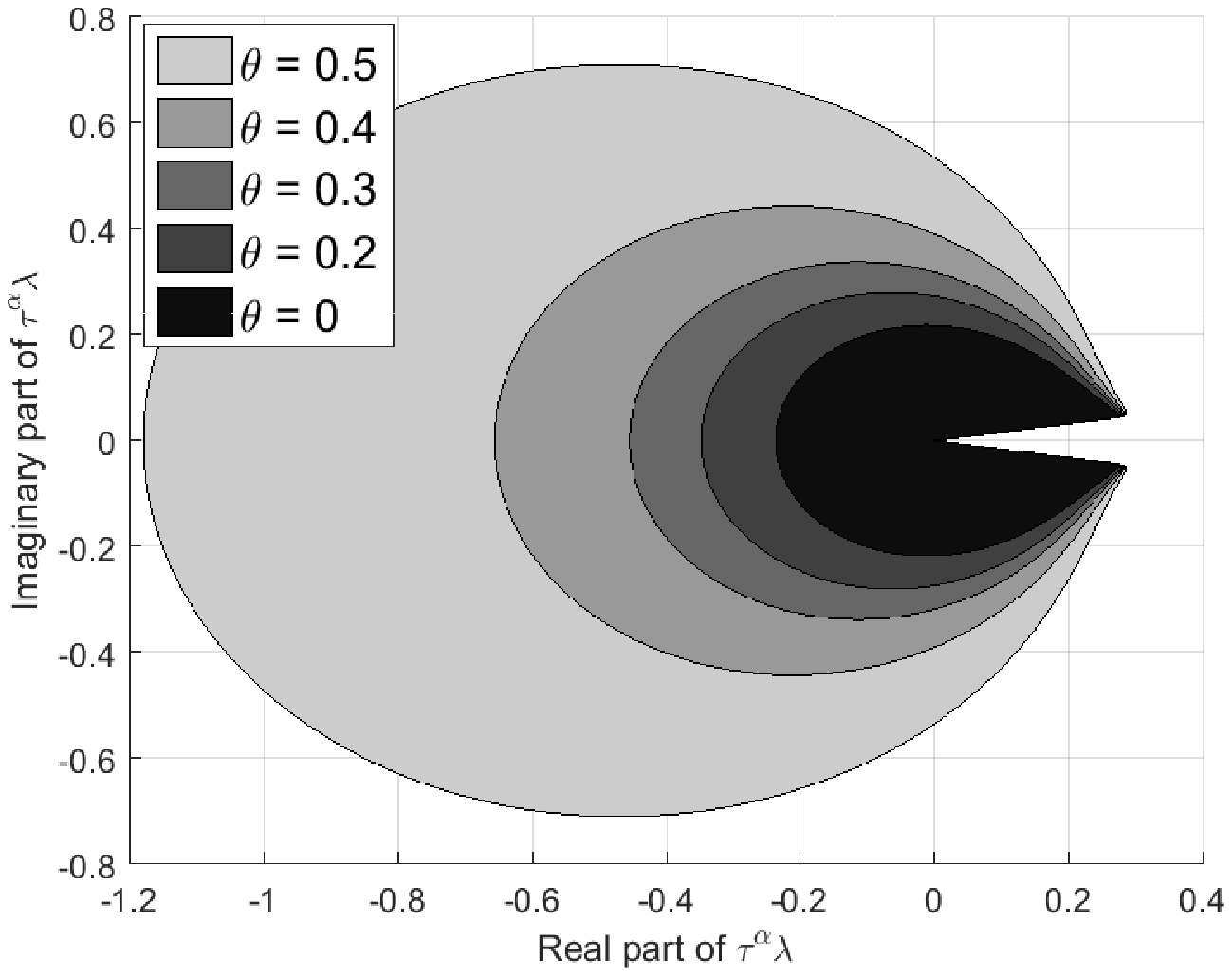,width=5cm} \par {(a) $\alpha=0.1$.}
\end{minipage}
\begin{minipage}{0.4\textwidth}\centering
\epsfig{figure=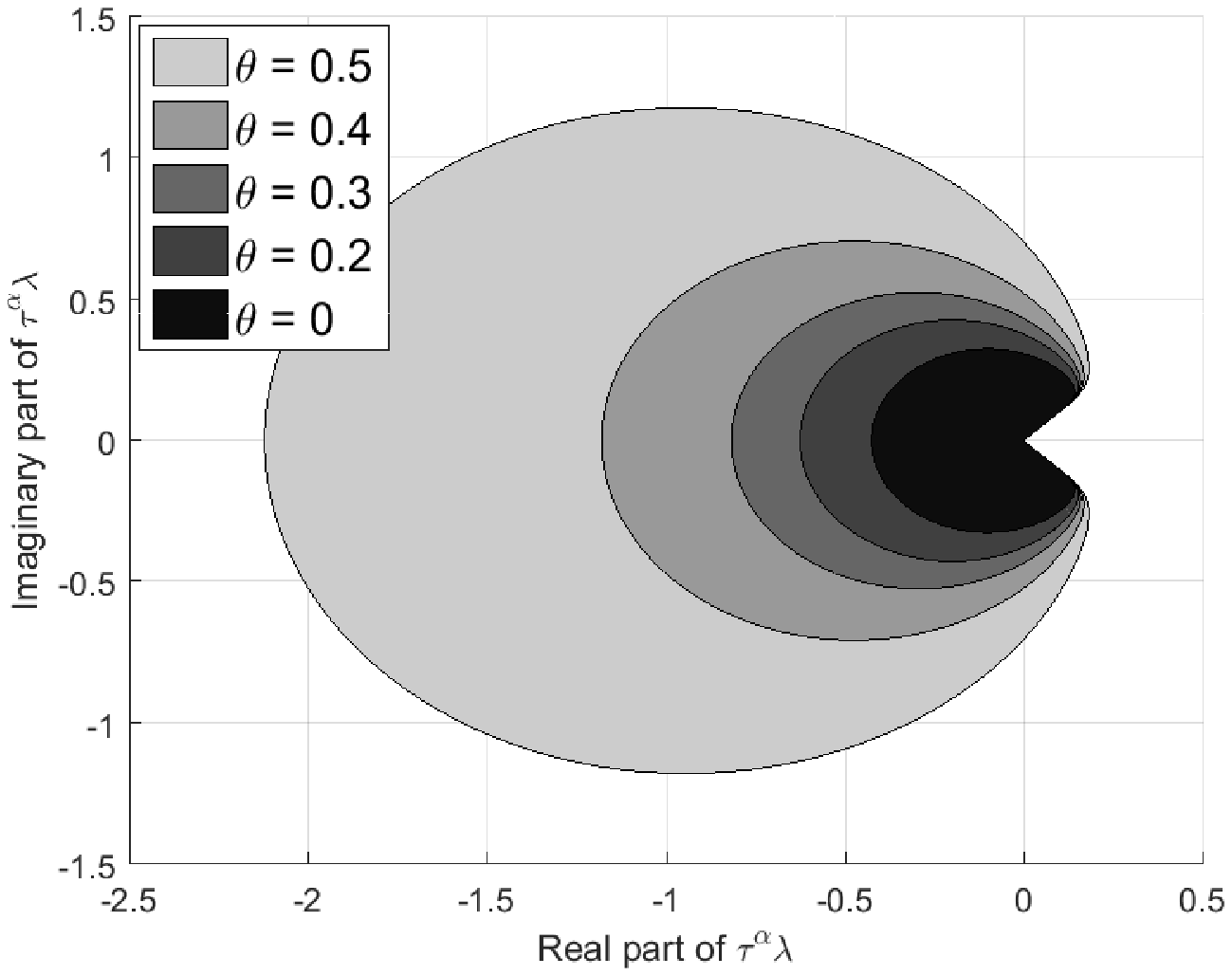,width=5cm} \par {(b) $\alpha=0.5$.}
\end{minipage}\\
\begin{minipage}{0.4\textwidth}\centering
\epsfig{figure=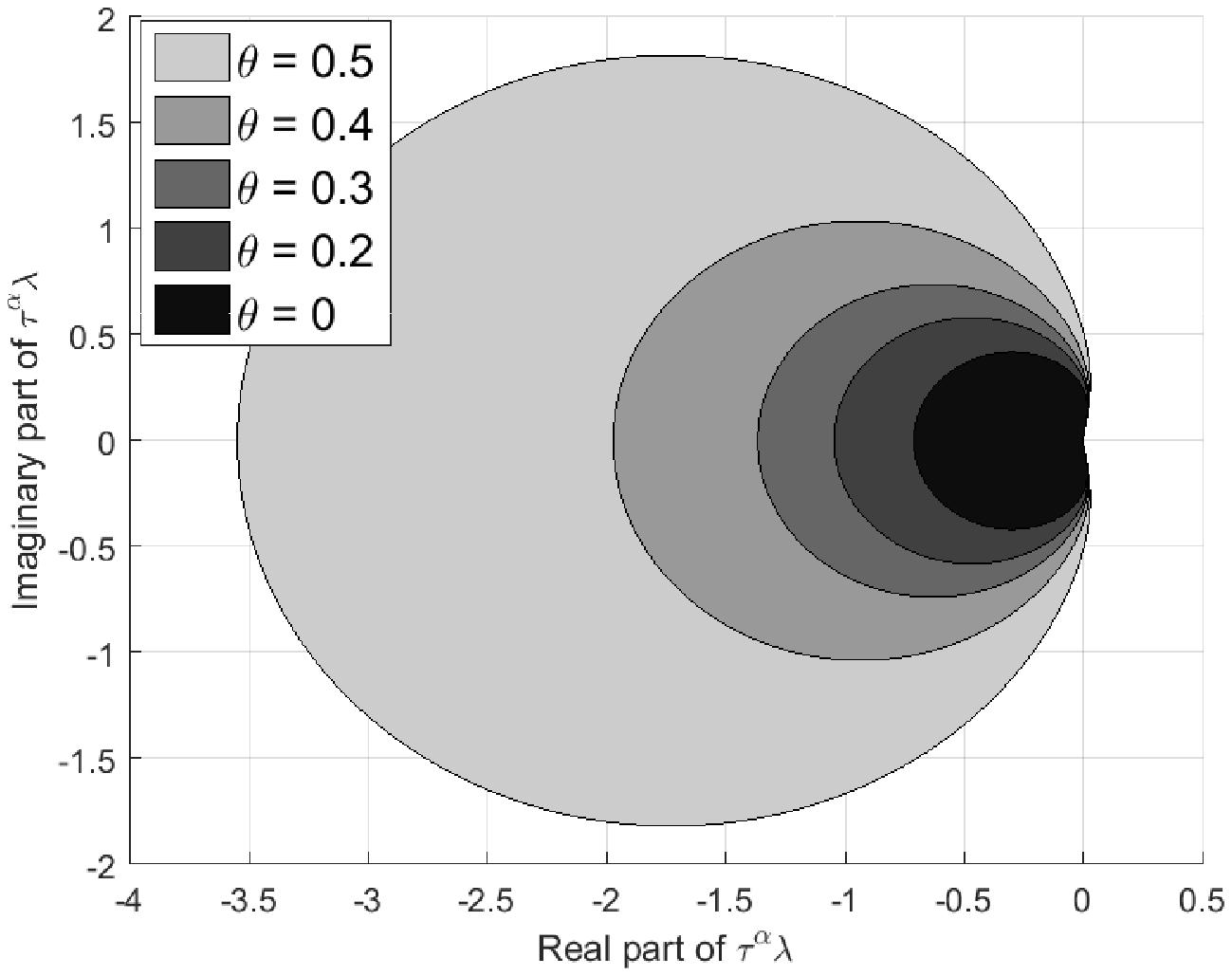,width=5cm} \par {(c) $\alpha=0.9$.}
\end{minipage}
\begin{minipage}{0.4\textwidth}\centering
\epsfig{figure=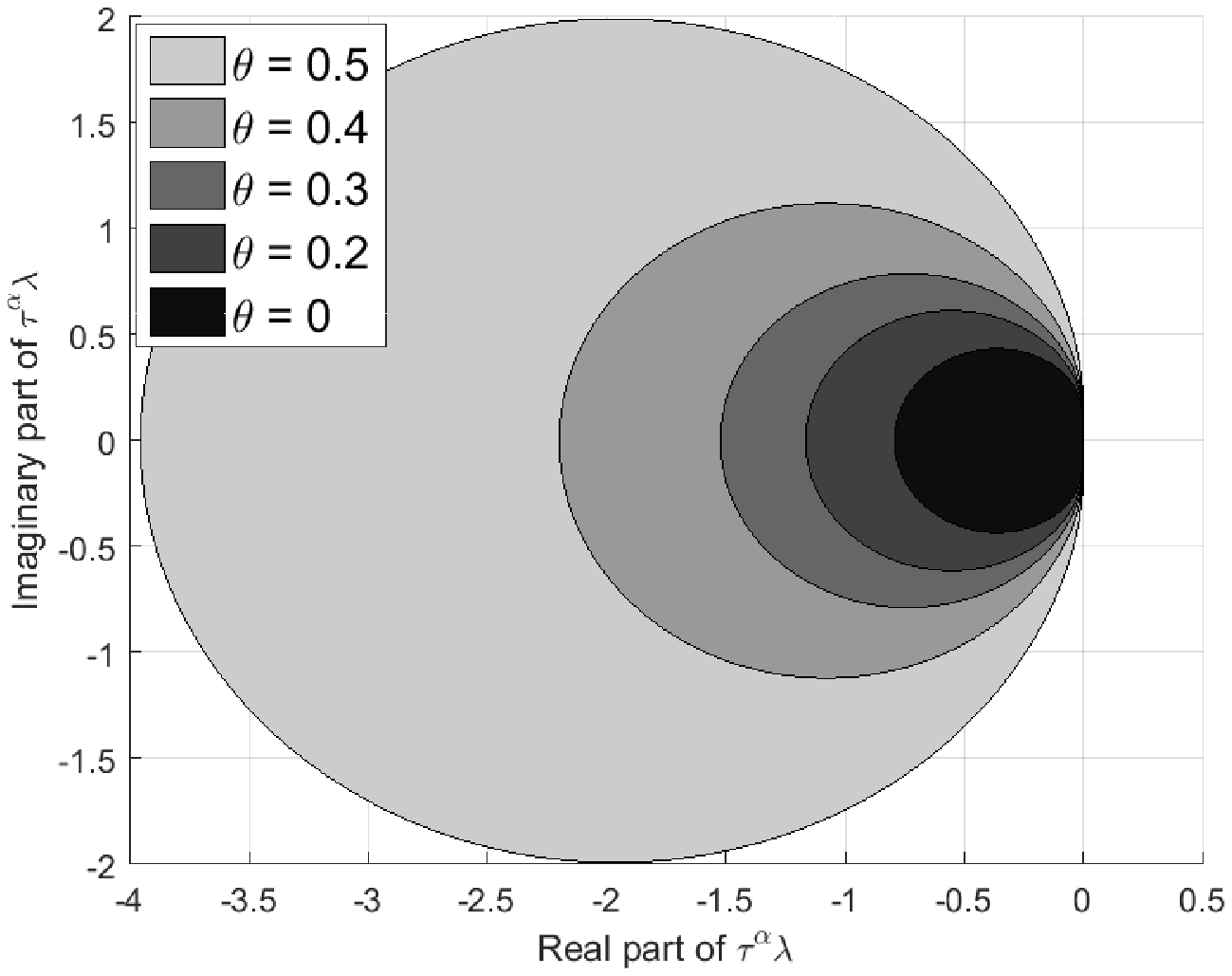,width=5cm} \par {(d)   $\alpha=0.99$.}
\end{minipage}
\end{center}
\caption{Stability region (the shaded area) of the method \eqref{s4:IMEX-3}, $q=2$,
$\rho=2\lambda,\kappa=-\theta\rho$.\label{fig-stability}}
\end{figure}

Figure \ref{fig-stability-2} displays the stability region
when $Re(\kappa)>Re(\lambda-3\rho)/4$. We again observe that the stability region becomes large as
$\theta$ (or $Re(\kappa)$) increases and the stability region contains the whole negative axis, which verifies
Theorem \ref{thm:4-2}.

Figure \ref{fig-stability-4} shows the stability region  of the method \eqref{s4:IMEX-3}
when the  generalized Newton--Gregory formula  of order $p$ is applied, see \eqref{gngf}.
For $\theta=0.5$, we have $Re(\kappa)<Re(\lambda-3\rho)/4$; the method is conditionally stable and
the stability region becomes large as $p$ increases.
For $Re(\kappa)>Re(\lambda-3\rho)/4$, i.e., $\theta =0.63$,
the stability region contains the negative axis, see Figure \ref{fig-stability-5}.
%displays similar behavior as Figure \ref{fig-stability-3}, where the generalized
%Newton--Gregory formula of order $p$ is applied, see \eqref{gngf}.
For other fractional orders $\alpha\in (0,1]$, we have similar results.
% as shown in Figures \ref{fig-stability-4} and \ref{fig-stability-5}, which are not displayed here.
If the fractional backward difference formula (FBDF) of order $p$ is applied, the stability of the method  \eqref{s4:IMEX-3}
shows similar behavior as shown in Figures \ref{fig-stability-2}--\ref{fig-stability-5}, hence these
results are not shown here.

\begin{figure}[!h]
\begin{center}
\begin{minipage}{0.4\textwidth}\centering
\epsfig{figure=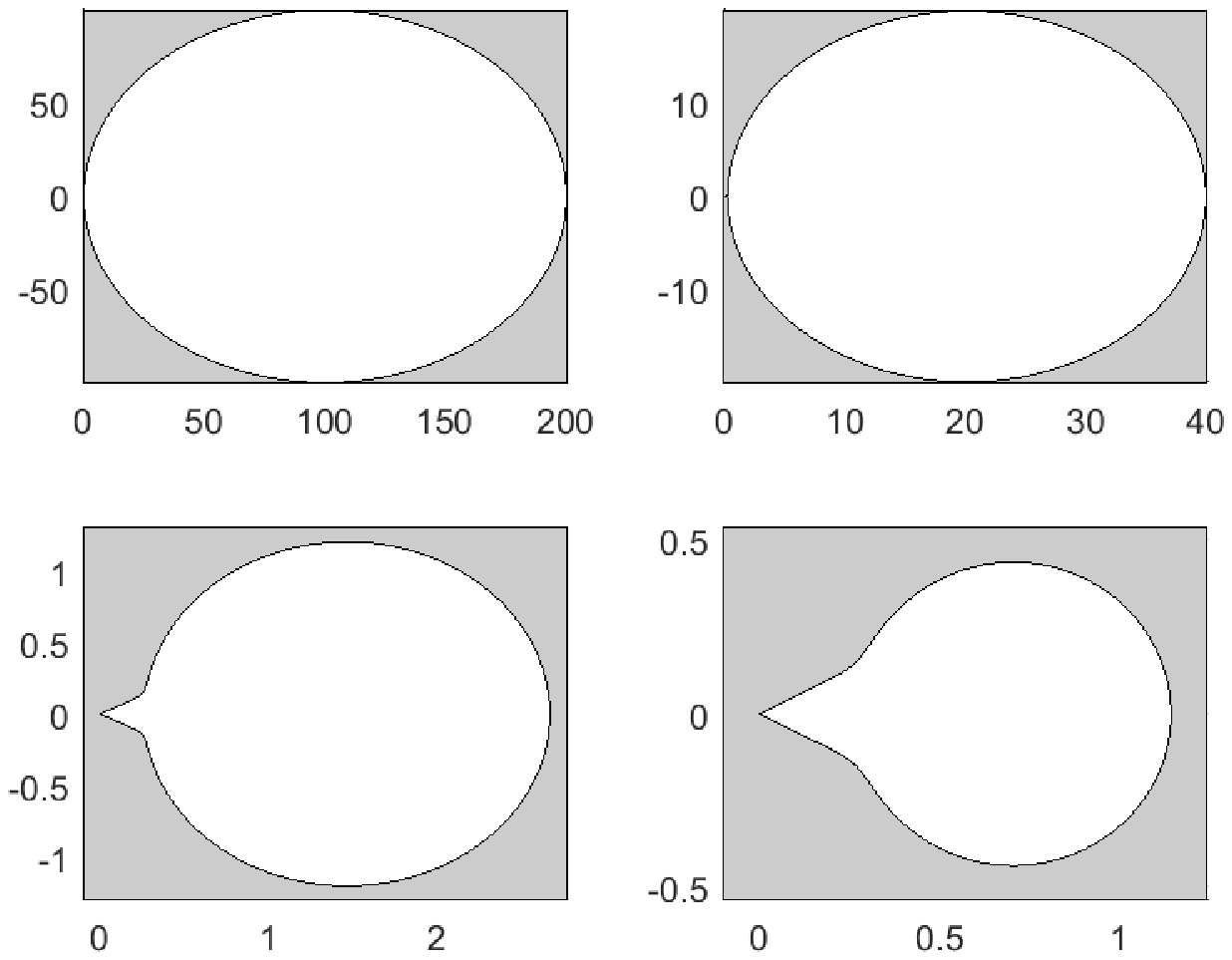,width=5cm} \par {(a) $\alpha=0.3$.}
\end{minipage}
\begin{minipage}{0.4\textwidth}\centering
\epsfig{figure=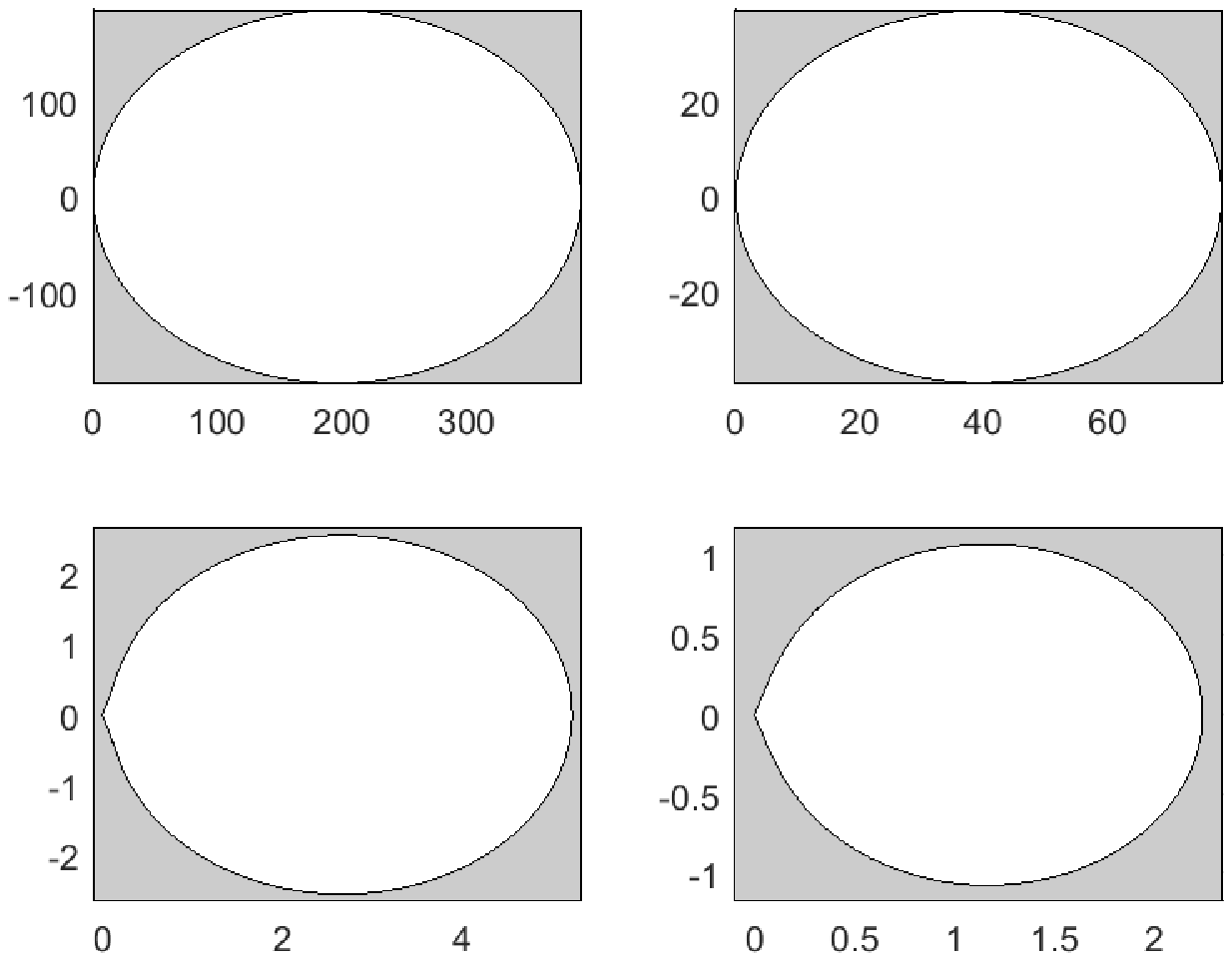,width=5cm} \par {(b) $\alpha=0.8$.}
\end{minipage}
\end{center}
\caption{Stability region (the shaded area)
of the method \eqref{s4:IMEX-3}, $q=2$,
$\rho=2\lambda,\kappa=-\theta\rho$, $\theta=0.626,0.63,0.8,0.7$ in clockwise order
starting from the upper left.\label{fig-stability-2}}
\end{figure}

\begin{figure}[!h]
\begin{center}
\epsfig{figure=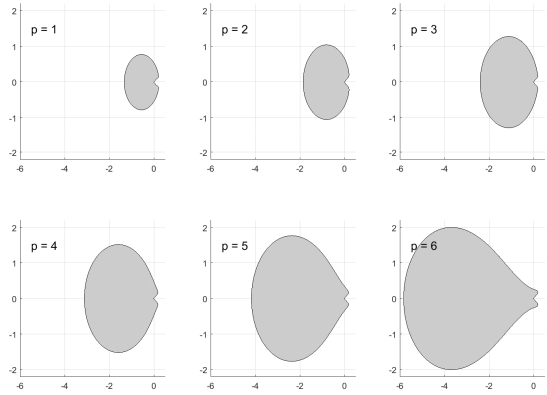,width=9cm}
\end{center}
\caption{Stability region of the method \eqref{s4:IMEX-3} based on generalized Newton--Gregory formula of order $p$, $\alpha=0.4,q=2,\rho=2\lambda,\kappa=-0.5\rho$.
 \label{fig-stability-4}}
\end{figure}

\begin{figure}[!h]
\begin{center}
\epsfig{figure=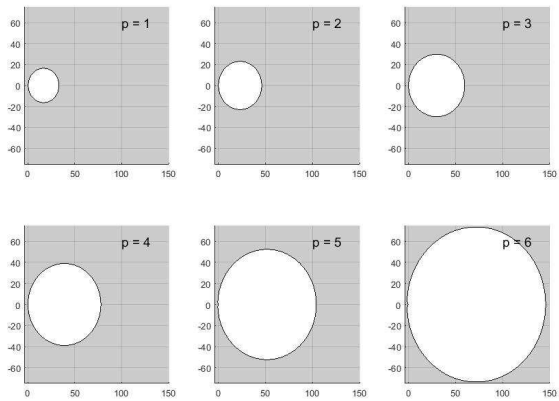,width=9cm}
\end{center}
\caption{Stability region of  \eqref{s4:IMEX-3} based on the FBDF-$p$,
$\alpha=0.4,q=2,\rho=2\lambda,\kappa=-0.63\rho$.
 \label{fig-stability-5}}
\end{figure}

%In fact, we have the following observations and conclusions.
%\begin{itemize}
%\item  The method \eqref{s4:IMEX-3} is conditionally stable  if
%$\tau^{\alpha} \in(0,\tau_q)$, where
%\begin{equation}\label{s4:stability-6}
%\tau_q=\left\{\begin{aligned}
%&\frac{\omega(p,-\alpha,1,-1)}{\lambda-\rho-2\kappa},
%{\quad}\kappa<\frac{\lambda-\rho}{2},&  q = 1,\\
%&\frac{\omega(p,-\alpha,1,-1)}{\lambda-3\rho-4\kappa},
%{\quad}\kappa< \frac{\lambda-3\rho}{4},&q=2,\\
%&\frac{\omega(p,-\alpha,1,-1)}{\lambda-7\rho-8\kappa},
%{\quad}\kappa<\frac{\lambda-7\rho}{8},&q=3.
%\end{aligned}\right.\end{equation}
%\end{itemize}
%%
%
%
%One  can see that the data shown Table  \ref{s4:stability-interval}
%verifies  \eqref{s4:stability-6}.
%One can also numerically verify   \eqref{s4:stability-6}
%for the fractional order $\alpha\in (0,1)$.

\subsection{System of equations}
In this subsection, we extend the semi-implicit method \eqref{s4:IMEX} to the following system of equations
\begin{equation}\label{s23:fode}
{}_{C}D^{\vec{\alpha}}_{0,t}\vec{u}(t) = \mathbf{A} \vec{u}(t) +  \vec{f}(\vec{u}(t),t),{\quad}\vec{u}(0)=\vec{u}_0,{\quad}t\in(0, T],
\end{equation}
where $\vec{u}(t)=(u^{(1)}(t),...,u^{(d)}(t))^T$,
$\vec{\alpha}=(\alpha^{(1)},...,\alpha^{(d)})^T$, $\alpha^{(j)}\in(0, 1]$,
$\mathbf{A}\in \mathbb{C}^{d\times d}$,   ${}_{C}D^{\vec{\alpha}}_{0,t}\vec{u}(t)
=({}_{C}D^{{\alpha^{(1)}}}_{0,t}{u^{(1)}}(t),...,
{}_{C}D^{{\alpha^{(d)}}}_{0,t}{u^{(d)}}(t))^T$,
$\vec{f}(\vec{u},t)=(f^{(1)}(\vec{u},t),...,f^{(d)}(\vec{u},t))^T$,
${f}^{(j)}(\vec{u},t)=f^{(j)}(u^{(1)},...,u^{(d)},t)^T$, and $d\in \mathbb{N}$.

Let $\vec{U}_n=(U^{(1)}_n,U^2_n,...,U^{(d)}_n)^T$ be the approximate solution of $\vec{u}(t_n)$.
Similar to \eqref{s4:IMEX}, we directly present the semi-implicit method
for \eqref{s23:fode} as follows:
\begin{eqnarray}
D_{\tau}^{(\vec{\alpha},n,\vec{m},\vec{\sigma})}&&\vec{U}
=\mathbf{A} \vec{U}_n +  \vec{F}_n - E^{n,\vec{m}_f,\vec{\delta}}_q(\vec{F})
-\boldsymbol{\kappa} E^{n,\vec{m}_u,\vec{\sigma}}_q(\vec{U}),\label{s23:fode-2}
 \end{eqnarray}
%\begin{eqnarray}
%D_{\tau}^{(-\vec{\alpha},n,\vec{m},\vec{\sigma})}&&\vec{U}
%=\mathbf{A} \vec{U}_n +  \vec{F}_n - E^n_2(\vec{F}) -\boldsymbol{\kappa} E^n_q(\vec{U})\label{s23:fode-2}\\
%&&+ \Big(\sum_{j=1}^{m^{(1)}_f} w^{(f,1)}_{n,j}(F^{(1)}_j - F^{(1)}_0),\cdots,
%\sum_{j=1}^{m^{(d)}_f} w^{(f,d)}_{n,j}(F^{(d)}_j - F^{(d)}_0)\Big)^T\nonumber\\
% &&+ \boldsymbol{\kappa}\Big(\sum_{j=1}^{m^{(1)}_u} w^{(u,1)}_{n,j}(U^{(1)}_j - U^{(1)}_0),\cdots,
%\sum_{j=1}^{m^{(d)}_u} w^{(u,d)}_{n,j}(U^{(d)}_j - U^{(d)}_0)\Big)^T,\nonumber
% \end{eqnarray}
where $\vec{m}=(m^{(1)},...,m^{(d)})^T$,
$\vec{\sigma}=(\sigma^{(1)},...,\sigma^{(d)})^T$,
$F^{(j)}_n= f^{(j)}(\vec{U}_n,t_n)$,
$D_{\tau}^{(\vec{\alpha},n,\vec{m},\vec{\sigma})}\vec{U}$
$=(D_{\tau}^{(\alpha^{(1)},n,m^{(1)},\sigma^{(1)})}U^{(1)},...,
D_{\tau}^{(\alpha^{(d)},n,m^{(d)},\sigma^{(d)})}U^{(d)})^T$,
$D_{\tau}^{({\alpha^{(j)}},n,{m^{(j)}},{\sigma^{(j)}})}$ is defined by \eqref{s4:Dalf},
$E^{n,\vec{m},\vec{\sigma}}_q(\vec{U})
=(E^{n,{m^{(1)}},\sigma^{(1)}}_q(U^{(1)}),...,E^{n,{m^{(d)}},\sigma^{(d)}}_q(U^{(d)}))^T$,
$E^{n,m,\sigma}_q$ is defined by
\eqref{s4:Ep-m}, and $\boldsymbol{\kappa}$ is a $d\times d$ matrix.
%The starting weights $w^{(u,i)}_{n,j}(1\leq i \leq d)$ (or $w^{(f,i)}_{n,j}$) can be similarly determined
%as $w^{(u)}_{n,j}$ (or $w^{(f)}_{n,j}$), which are not given explicitly.

%$\boldsymbol{\kappa}=diag(\kappa_1,\kappa_2,...,\kappa_d)$.

%We do not add correction terms to \eqref{s23:fode-2} to obtain the improved semi-implicit
%method with higher accuracy, since the correction terms do not affect the stability,
%but the accuracy of the method.

Next, we analyse the linear stability of \eqref{s23:fode-2}.
Let $\vec{f}=\mathbf{B}\vec{u},\mathbf{B}\in \mathbb{C}^{d\times d}$ and omit all the correction
terms in \eqref{s23:fode-2}. Then we obtain
\begin{eqnarray}
D_{\tau}^{(\vec{\alpha},n,\vec{m},\vec{\sigma})}&&\vec{U}
=(\mathbf{A} +\mathbf{B}) \vec{U}_n - (\boldsymbol{\kappa}+\mathbf{B}) E^n_q(\vec{U}).\label{s23:fode-4}
 \end{eqnarray}
Similar to the stability analysis given in the previous subsection, we can   obtain that
the method \eqref{s23:fode-4} is stable if
\begin{equation}\label{s23:fode-3}
\det\left(\diag(\vec{\omega}(p,\vec{\alpha},1,z)) - \diag(\tau^{\vec{\alpha}})
\left[(\mathbf{A} + \mathbf{B})
-  (\mathbf{B} + \boldsymbol{\kappa})(1-z)^q\right]\right)\neq 0,{\quad}|z|\leq 1,
\end{equation}
where $\vec{\omega}(p,\vec{\alpha},1,z))
=(\omega(p,\alpha^{(1)},1,z),..., \omega(p,{\alpha^{(d)}},1,z))^T$,
$\tau^{\vec{\alpha}}=(\tau^{\alpha^{(1)}},...,\tau^{\alpha^{(d)}})^T$, $\sigma^{(i)}_{m^{(i)}}<\alpha^{(i)}+p$,   $\sigma^{(i)}_{m^{(i)}_u},\delta^{(i)}_{m^{(i)}_f}<q$,
$1\leq i \leq d$.

The matrix
$\boldsymbol{\kappa}$ plays an important role in the stability of the method \eqref{s23:fode-2}.
From \eqref{s4:taylor}, we  find that $\boldsymbol{\kappa}$ may depend on the Jacobian of
$\vec{f}(\vec{u},t)$ with respect to $\vec{u}$. It is much more difficult to present an
explicit criterion as shown in Theorem \ref{thm:4-2} for a system equations.
However, we can deduce that the stability of the method \eqref{s23:fode-4} can be enhanced
if $\boldsymbol{\kappa}$ is positive definite for $q=1,2$ with the generating function \eqref{gngf}
for $p=1,2$ applied.
For example,   let $q=1$, $p=1$ or 2, $\vec{U}_0=\vec{0}$, and choose a  matrix
$\boldsymbol{\kappa}$ such that $\boldsymbol{\kappa} +\mathbf{B}$ is symmetric and positive definite.  Then
\begin{eqnarray}
&&\sum_{n=1}^{M}(\vec{U}_n)^TD_{\tau}^{(\vec{\alpha},n,\vec{m},\vec{\sigma})}\vec{U}
-\sum_{n=1}^{M}(\vec{U}_n)^T(\mathbf{A} +\mathbf{B})\vec{U}_n\label{s23:fode-5}\\
=&&-\sum_{n=1}^M(\vec{U}_n)^T(\boldsymbol{\kappa}+\mathbf{B}) E^n_1(\vec{U})
\leq -\frac{1}{2}(\vec{U}_M)^T(\boldsymbol{\kappa}+\mathbf{B})\vec{U}_M.\nonumber
 \end{eqnarray}
If $\mathbf{A} +\mathbf{B}$  is negative definite, then \eqref{s23:fode-5} implies
\begin{equation}\label{s23:fode-6}
2\sum_{n=1}^M(\vec{U}_n)^TD_{\tau}^{(\vec{\alpha},n,\vec{m},\vec{\sigma})}\vec{U}
-2\sum_{n=1}^M(\vec{U}_n)^T(\mathbf{A} +\mathbf{B})\vec{U}_n
+(\vec{U}_M)^T(\boldsymbol{\kappa}+\mathbf{B}) \vec{U}_M
\leq 0.
\end{equation}
Using $\sum_{n=1}^M(\vec{U}_n)^TD_{\tau}^{(\vec{\alpha},n,\vec{m},\vec{\sigma})}\vec{U}\geq 0$
 (see \cite{WangVong14}) yields $\vec{U}_n=\vec{0}$.

%Eq. \eqref{s23:fode-6} implies that positive $\boldsymbol{\kappa}$ with large eigenvalues
%significantly enhances the stability of \eqref{s23:fode-4}.
A practical option can be $\boldsymbol{\kappa}=\diag(\kappa_1,\kappa_2,...,\kappa_d),\kappa_i\geq0$,
which is used in the numerical simulations. From \eqref{s23:fode-5}, we see that  $\boldsymbol{\kappa}=\diag(\kappa_1,\kappa_2,...,\kappa_d)$ can significantly enhance the stability
of \eqref{s23:fode-4} when $\kappa_i\geq 0$ is sufficiently large for $q=1,2$.
Next, we focus on the other important task of developing a fast method to calculate
the discrete convolutions in the semi-implicit methods.

% % % % % % % % % % % % % % % % % % % % % % % % % % % % % % % % %

\section{Fast implementation}\label{sec2}
In this section, we generalize and extend the approach in \cite{SchLopLub06}
to calculate the discrete convolution \eqref{dis-conv}
%\begin{equation}\label{eq:discrete-conv}
%\sum_{j=0}^{n}\omega_{n-j}u_{j}
%\end{equation}
originating from the discretization of the fractional integral and derivative operators,
where $\omega_{n}$ are the coefficients of the generating function $F_{\omega}(\delta(\xi)/\tau)$,
i.e., $F_{\omega}(\delta(\xi)/\tau)=\sum_{n=0}^{\infty}\omega_{n}\xi^n$,
which has been discussed in the previous section.
In \cite{SchLopLub06},  $\delta(\xi)$ is related to the
linear multistep method for the first-order initial value problem.
In the present fast method, we assume $\delta(\xi)=1-\xi$, i.e., $\delta(\xi)$
corresponds to the backward Euler method.
\subsection{Review of the existing method}
We first recall the fast method proposed in \cite{SchLopLub06}.
The key idea is to re-express the coefficient $\omega_{n}$ in \eqref{dis-conv}
using the integral formula,
and then approximate it using  numerical quadrature.
%Recently,  Banjai et. al. \cite{BanLopSch17}

%Denote $t_n=n\tau\,(n=0,1,...,n_T)$ as the grid point, where $\tau$ is the stepsize
%and $n_T$ is a positive integer.

By Cauchy's integral formula, we have
\begin{equation}\label{CauchyF}
F_{\omega}(\delta(\xi)/\tau)=\sum_{n=0}^{\infty}\omega_{n}\xi^n
=\frac{1}{2\pi i}\int_{\mathcal{C}}
\left(\frac{\delta(\xi)}{\tau}-\lambda\right)^{-1}F_{\omega}(\lambda)\dx[\lambda],
\end{equation}
where $\mathcal{C}$ is a suitable contour.
Define $e_n(z)$ as
\begin{equation} \label{enz}
\left({\delta(\xi)} -z\right)^{-1} = \sum_{n=0}^{\infty}e_{n}(z)\xi^n.
\end{equation}
Then $\omega_n$ can be expressed by
\begin{equation}\label{Flambda}
\omega_{n} = \frac{\tau}{2\pi i}\int_{\mathcal{C}}e_n(\lambda\tau)F_{\omega}(\lambda)\dx[\lambda].
\end{equation}
Inserting \eqref{Flambda} into \eqref{dis-conv} yields
\begin{equation}\label{eq:discrete-conv-2}
D^{(\alpha,n)}_{\tau}u=\sum_{j=0}^{n}\omega_{n-j}u_{j}=\frac{\tau}{2\pi i}
\sum_{j=0}^{n}\int_{\mathcal{C}}e_{n-j}(\lambda\tau)F_{\omega}(\lambda)u_{j}\dx[\lambda].
\end{equation}
When the  integral on the right-hand-side of the above equation
is approximated by a suitable quadrature method, we have the fast convolution developed in \cite{SchLopLub06}.

The fractional backward difference formula (FBDF) and implicit
Runge--Kutta method  were investigated in \cite{SchLopLub06}. For the FBDF of order $p$
(FBDF-$p$), one has $\delta(\xi)=\sum_{k=1}^p\frac{1}{k}(1-\xi)^k$
and $F_{\omega}(\lambda)=\lambda^{\alpha}$, and
the corresponding $e_n(z)$ in \eqref{enz} for $p=1,2$ is given by
\begin{equation} \label{enzfbdf}
e_n(z) =\left\{\begin{aligned}
&(1-z)^{-1-n},\quad \text{FBDF-}1,\\
& \frac{1}{1+2z}\left((2-\sqrt{1+2z})^{-1-n} - (2+\sqrt{1+2z})^{-1-n}\right),
\quad \text{FBDF-}2.
\end{aligned}\right.\end{equation}
It seems much more complicated to obtain $e_n(z)$ for FBDF-$p$ with $p\geq 3$.

Next, we  simplify the method in \cite{SchLopLub06}, that is,
we will always have $e_n(z)=(1-z)^{-1-n}$ used in \eqref{enz}, and
$F_{\omega}(\lambda)$ in \eqref{Flambda} is related to a fractional linear multi-step method (FLMM)
that discretizes the fractional integral or Riemann--Liouville (RL) fractional derivative operator,
which will be given later.

\subsection{A new fast convolution}
We consider \eqref{dis-conv} with the coefficients $\omega_{n}$ derived from
the  FBDF-$p$ ($1\leq p\leq 6$)   for the fractional integral
and RL derivative operators, i.e.,
the coefficients $\omega_{n}$ satisfy \eqref{fbdf}.

Let $\delta(\xi)=1-\xi$ and repeat the procedures \eqref{CauchyF}--\eqref{Flambda}. Then
the coefficient $\omega_{n}$ in \eqref{fbdf} can also be expressed by
\begin{equation}\label{omega-n}
\omega_{n} = \frac{\tau}{2\pi i}\int_{\mathcal{C}}e_n(\lambda\tau)F_{\omega}(\lambda)\dx[\lambda]
= \frac{\tau}{2\pi i}\int_{\mathcal{C}}(1-\lambda\tau)^{-1-n} F_{\omega}(\lambda)\dx[\lambda],
\end{equation}
where   $\mathcal{C}$ is a   contour that surrounds the pole
$\lambda=\frac{1}{\tau}$ of $e_n(\lambda\tau)$
and $F_{\omega}(\lambda)$ is given by
\begin{equation}\label{BDF-F}
F_{\omega}(\lambda)=\omega(p,\alpha,\tau,1-\tau\lambda)
=\lambda^{\alpha}\left(\sum_{k=1}^p\frac{1}{k}(\tau\lambda)^{k-1}\right)^{\alpha}.
\end{equation}

By choosing a suitable contour, \eqref{omega-n} can be approximated with high accuracy.
As was done in \cite{SchLopLub06}, we can apply the trapezoidal rule based on either a  hyperbolic contour or
a Talbot contour to approximate \eqref{omega-n}, which is given by
\begin{equation}\label{omega-n-2}
\omega_{n} \approx \hat{\omega}_{n}=\mathrm{Im}
\bigg( \tau\sum_{k=-N}^{N-1}  w^{(\ell)}_k(1-\lambda^{(\ell)}_k\tau)^{-1-n}
F_{\omega}(\lambda^{(\ell)}_k)\bigg),
\end{equation}
where the quadrature points $\lambda^{(\ell)}_k$ and weights $w^{(\ell)}_k$
 are defined later in this section, see \eqref{weights} or \eqref{weights-2}.
If $\omega_n$ in \eqref{dis-conv} is defined by \eqref{gngf},
then $F_{\omega}(\lambda)$ in \eqref{omega-n} is given by
\begin{equation}\label{GNGF-F}
F_{\omega}(\lambda)=\omega(p,\alpha,\tau,1-\tau\lambda)
=\lambda^{\alpha} \sum_{k=0}^{p-1}g^{(\alpha)}_k(\tau\lambda)^k.
\end{equation}
%\begin{remark}
%For the FLMMs given in  \cite{Lub86}, the corresponding $F_{\omega}(\lambda)$
%can be obtained very easily. For example,
%$F_{\omega}(\lambda)=2^{\alpha}\lambda^{-\alpha}(2-\tau\lambda)^{\alpha}$
%if the fractional trapezoidal rule based on the generating function
%$\omega(p,\alpha,\tau,z)=\left(({1-z})/({2(1+z)\tau})\right)^{-\alpha}$ is applied.
%\end{remark}

%It is known that $\omega(z)$ is an analytical
%Let $K(s)$ be the Laplace transform of $k(t)$. Then the inverse Laplace transform
%of $K(s)$  is given by $k(t) = \frac{1}{2\pi i}\int_{\Gamma}K(s)e^{ts}\dx[s]$,
%%\begin{equation}\label{invlaplace}
%%k(t) = \frac{1}{2\pi i}\int_{\Gamma}K(s)e^{ts}\dx[s],
%%\end{equation}
%where $\Gamma$ is a suitable complex contour.
%$\frac{1}{2\pi i}\int_{\Gamma}K(s)e^{ts}\dx[s]$  can be exponentially approximated by the
%trapezoidal rule
%\begin{equation}\label{eq:trap}
%k(t) = \frac{1}{2\pi i}\int_{\Gamma}K(s)e^{ts}\dx[s]
%\approx \sum_{j=-N}^{N}\omega_jK(\lambda_j)e^{t{\lambda}_j}
%\end{equation}
%with a suitably chosen complex contour $\Gamma$,
%such as the Talbot contour (see \cite{LubSch02,Weideman06}),
%the parabolic contour (see  \cite{WeidemanTrefethen07}),
%and the hyperbolic contour (see  \cite{PangSun2016,WeidemanTrefethen07}).
%The trapezoidal rule \eqref{eq:trap} was used to
%develop a fast convolution to approximate convolution  of the type
%(see \cite{LopLubSch08,LubSch02,SchLopLub06}),
%\begin{equation} \label{eq:conv}
%\int_{0}^tk(t-s)u(s)\dx[s].
%\end{equation}

We now present our  fast  method for calculating
\eqref{dis-conv}  in  Algorithm \ref{fast_conv}.

\begin{algorithm}[!ht]
\begin{minipage}{1\textwidth}
%\HRule \\[0.1cm]
\textbf{Input}: The positive integers $n_0$, $N$, and   $B\geq 2$,
the real number $\alpha$, the time stepsize $\tau$,
the quadrature points $\{{\lambda}^{(\ell)}_k\}$ and weights $\{{w}^{(\ell)}_k\}$
defined by \eqref{weights} or \eqref{weights-2},
the coefficients $\omega_n$  defined by the generating function
$\omega(z)=\omega(p,\alpha,\tau,z)$
(see \eqref{fbdf} or \eqref{gngf}),  and the function
$F_{\omega}(\lambda)$ (see \eqref{BDF-F} or \eqref{GNGF-F}).

\textbf{Output}: ${}_FD^{(\alpha,n)}_{\tau,n_0}u$.\\
\hRule \\[0.1cm]
\begin{itemize}[leftmargin=*]
   \item For each $n\geq n_0$, find   the smallest integer  $L$  satisfying ${n-n_0+1}<2B^L$.\\
  For $\ell=1,2,...,L-1$, determine the  integer $q_{\ell}$ such that
  \begin{equation}\label{b-ell}
  b^{(n)}_{\ell}=q_{\ell}B^{\ell} \quad \text{satisfies}\quad
  {n-n_0+1}-b^{(n)}_{\ell}\in[B^{\ell-1},2B^{\ell}-1].
  \end{equation}
Set $b^{(n)}_{0}={n-n_0}$ and $b^{(n)}_L=0$.
  \item   Decompose the convolution $\sum_{j=0}^{n}\omega_{n-j}u_{j}$ as
  $\sum_{j=0}^{n}\omega_{n-j}u_{j}=\sum_{\ell=0}^{L}u^{(\ell)}_n$,
  where $  u^{(0)}_n = \sum_{j=n-n_0}^{n}\omega_{n-j}u_{j}$ and
  $u^{(\ell)}_n   =\sum_{j=b^{(n)}_{\ell}}^{b^{(n)}_{\ell-1}-1}\omega_{n-j}u_{j}.$
%  \begin{equation}
%  u^{(0)}_n = \sum_{j=n-n_0}^{n}\omega_{n-j}u_{j},{\quad}
%  u^{(\ell)}_n   =\sum_{j=b^{(n)}_{\ell}}^{b^{(n)}_{\ell-1}-1}\omega_{n-j}u_{j}.
%  \end{equation}

  \item   For every $1\leq \ell\leq L$, approximate $u^{(\ell)}_n$ with
  $\widehat{u}^{(\ell)}_n$, where
  \begin{equation}\label{eq:gauss-3}\begin{aligned}
u^{(\ell)}_n
=&\sum_{j=b^{(n)}_{\ell}}^{b^{(n)}_{\ell-1}-1}\omega_{n-j}u_{j}
%=\sum_{j=b^{(n)}_{\ell}}^{b^{(n)}_{\ell-1}-1}\left( \frac{\tau}{2\pi i}\int_{\Gamma_{\ell}}
%e_{n-j}(\tau\lambda)F_{\omega}(\lambda)\dx[\lambda]\right)u_j\\
=\frac{\tau}{2\pi i}\int_{\Gamma_{\ell}}
(1-\tau\lambda)^{-[n-(b^{(n)}_{\ell-1}-1)]}F_{\omega}(\lambda)y^{(\ell)}(\tau\lambda)\dx[\lambda]\\
\approx&\mathrm{Im}\bigg\{ \sum_{k=-N}^{N-1}{w}_k^{(\ell)}F_{\omega}({\lambda}^{(\ell)}_k)
(1-\tau\lambda_k^{(\ell)})^{-[n-(b^{(n)}_{\ell-1}-1)]}{y}(\tau{\lambda}^{(\ell)}_k)\bigg\}
=\widehat{u}^{(\ell)}_n
\end{aligned} \end{equation}
%\begin{equation}\label{eq:gauss-3}\begin{aligned}
%\widehat{u}^{(\ell)}_n
%=&\mathrm{Im}\left\{ \sum_{j=-N}^{N-1}{w}_j^{(\ell)}F_{\omega}({\lambda}^{(\ell)}_j)
%(1-\tau\lambda_j^{(\ell)})^{-[n-(b^{(n)}_{\ell-1}-1)]}{y}(\tau{\lambda}^{(\ell)}_j)\right\}\\
%%=&\mathrm{Im}\left\{ 2\sum_{j=0}^{N-1}{w}_j^{(\ell)}F_{\omega}({\lambda}^{(\ell)}_j)
%%(1-\tau\lambda_j^{(\ell)})^{-[n-(b^{(n)}_{\ell-1}-1)]}{y}(\tau{\lambda}^{(\ell)}_j)\right\}
%\end{aligned} \end{equation}
%\begin{eqnarray}
%u^{(\ell)}_n=&&\sum_{j=b^{(n)}_{\ell}}^{b^{(n)}_{\ell-1}-1}\omega_{n-j}u_{j}\\
%%=\sum_{j=b^{(n)}_{\ell}}^{b^{(n)}_{\ell-1}-1}\left( \frac{\tau}{2\pi i}\int_{\Gamma_{\ell}}
%%e_{n-j}(\tau\lambda)F_{\omega}(\lambda)\dx[\lambda]\right)u_j\\
%=&& \frac{1}{2\pi i}\int_{\Gamma_{\ell}}
%(1-\tau\lambda)^{-[n-(b^{(n)}_{\ell-1}-1)]}F_{\omega}(\lambda)y^{(\ell)}(\tau\lambda)\dx[\lambda]\label{eq:gauss-3}\\
%\approx&&\mathrm{Im}\bigg\{ \sum_{j=-N}^{N-1}{w}_j^{(\ell)}F_{\omega}({\lambda}^{(\ell)}_j)
%(1-\tau\lambda_j^{(\ell)})^{-[n-(b^{(n)}_{\ell-1}-1)]}{y}(\tau{\lambda}^{(\ell)}_j)\bigg\}
%=\widehat{u}^{(\ell)}_n,\nonumber
%\end{eqnarray}
with $y^{(\ell)}(\tau\lambda)$  given by
$y^{(\ell)}(\tau\lambda)=\tau\sum_{j=b^{(n)}_{\ell}}^{b^{(n)}_{\ell-1}-1}
e_{(b^{(n)}_{\ell-1}-1)-j}(\tau\lambda)u_{j}.$
Here $y^{(\ell)}(\tau\lambda)=y(b^{(n)}_{\ell-1}\tau,b^{(n)}_{\ell}\tau,\tau\lambda)$ is   the
\emph{backward Euler} approximation to the solution at $t=b^{(n)}_{\ell-1}\tau$
of the linear initial-value problem
\begin{equation}\label{ode2}
y'(t)=\lambda y(t) + u(t),{\quad}y(b^{(n)}_{\ell}\tau)=0.
\end{equation}
The quadrature points  ${\lambda}_j^{(\ell)}$ and weights ${w}_j^{(\ell)}$ are
given by \eqref{weights} or \eqref{weights-2}.
%That is, \eqref{ode2} should be solved via the \emph{backward Euler} method.
\item   Calculate
\begin{equation}\label{s4:Dalf-fast}
{}_FD^{(\alpha,n)}_{\tau,n_0}u=u^{(0)}_n  + \widehat{u}^{(1)}_n+\cdots + \widehat{u}^{(L)}_n
\end{equation}
  with $\widehat{u}^{(\ell)}_n$ defined by \eqref{eq:gauss-3}.
 \end{itemize}
%\HRule \\[0.1cm]
\caption{Fast calculation of $D^{(\alpha,n)}_{\tau}u=D^{(\alpha,n,0,\sigma)}_{\tau}u=\sum_{j=0}^n\omega_{n-j}u_j$,
where $\omega_{n}$ satisfies
$\omega(p,\alpha,\tau,z)=\sum_{n=0}^{\infty}\omega_{n}z^n$, and $\omega(p,\alpha,\tau,z)$ can be
defined by \eqref{fbdf} or \eqref{gngf}.}\label{fast_conv}
\end{minipage}
\end{algorithm}

Our goal below is to determine the quadrature points ${\lambda}_k^{(\ell)}$
and weights ${w}_k^{(\ell)}$ in \eqref{eq:gauss-3},
such that  $\widehat{u}^{(\ell)}_n$ is a good approximation of
$u^{(\ell)}_n$ for any $n\geq n_0$.
The  trapezoidal rule based on
the Talbot contour  \cite{SchLopLub06,Weideman06} or  hyperbolic contour
\cite{LopLubPS2005,SchLopLub06} has been applied to approximate
$\frac{1}{2\pi i}\int_{\Gamma_{\ell}}e_{n-j}(\tau\lambda)F_{\omega}(\lambda)\dx[\lambda]$
in \eqref{eq:gauss-3}, which will be applied in this work.
%Let $\theta_j=(j+1/2)h$ be the grid points, where $h$
%is a given step size.
We present the quadrature points ${\lambda}_k^{(\ell)}$  and weights ${w}_k^{(\ell)}$
used in \eqref{eq:gauss-3}.
\begin{itemize}[leftmargin=*]
  \item
For the trapezoidal rule based on the  optimal  Talbot contour (see \cite{Weideman06}),
the quadrature points ${\lambda}_k^{(\ell)}$  and weights ${w}_k^{(\ell)}$
are given by
\begin{equation}\label{weights}
{\lambda}_k^{(\ell)} = z(\theta_k,N/T_{\ell}),{\quad}
{w}_k^{(\ell)}=\px[\theta]z(\theta_k,N/T_{\ell}), {\quad}\theta_k=\frac{(2k+1)\pi}{2N},
\end{equation}
where
$z(\theta,N)=N\left(-0.4814 +0.6443(\theta\cot(\theta)+i0.5653\theta)\right)$.
Our numerical results indicate that  $N=30$ works well when $T_{\ell}=(2B^{\ell}-2+n_0)\tau$ and
$B$ is not too large.

\item  For the trapezoidal rule based on the hyperbolic contour (see \cite{LopLubPS2005}),
the quadrature points ${\lambda}_k^{(\ell)}$  and weights ${w}_k^{(\ell)}$
are given by
\begin{equation}\label{weights-2}
{\lambda}_k^{(\ell)} = z(\theta_k,\mu_{\ell}),{\quad}
{w}_k^{(\ell)}=\px[\theta]z(\theta_k,\mu_{\ell}),{\quad}
\theta_k=(k+1/2)\hat{h},
\end{equation}
where $z(\theta,\mu_{\ell})=\mu_{\ell}\left(1-\sin(\psi +i\theta)\right)+\sigma$
and $\hat{h}$ is a step length parameter that is chosen as $\hat{h}=\pi/N$ in this paper.
According to \cite{LopLubPS2005}, we can
choose $\sigma=0$, $\psi=0.4\pi$, $\mu_{\ell}=N/(2T_{\ell})$,
$T_{\ell}=(2B^{\ell}-2+n_0)\tau$,
and $N=\lceil-\log(\tau^{1-\alpha}\epsilon)\rceil$ for numerical simulations,
where  $\epsilon$ is a given precision.  {Readers can also refer to \cite{SchLopLub06}, where a strategy was proposed
to choose the parameters for the hyperbolic contour.}
\end{itemize}

Due to the symmetry of the trapezoidal rule,  Eq. \eqref{eq:gauss-3}
can be replaced by
$\widehat{u}^{(\ell)}_n
=\mathrm{Im}\left\{ 2\sum_{k=0}^{N-1}{w}_k^{(\ell)}F_{\omega}({\lambda}^{(\ell)}_k)
(1-\tau\lambda_k^{(\ell)})^{-[n-(b^{(n)}_{\ell-1}-1)]}{y}(\tau{\lambda}^{(\ell)}_k)\right\},$
so that the computational cost can be halved.
The memory requirement and computational cost of the present fast method are about $O(\log n_T)$ and
$O(Nn_T\log n_T)$, respectively, when $n_T$ is suitably large;
see also \cite{LopLubSch08,LubSch02,YuPK16,ZengTB2017}.

{
\begin{remark}
For the contour integral \eqref{eq:omega-n} in this work, $e_n(\tau\lambda)$ has a similar property as
$\exp(\lambda t_n)$ for $Re(\lambda)\leq 0$ and $F_{\omega}(\lambda)$ may have weak singularity.
%Theoretically, the trapezoidal rule based on the  Talbot contour works for the contour integral \eqref{eq:omega-n}, see also \cite{SchLopLub06}.
The optimal Talbot contour $z(\theta,N/t)$ derived
in \cite{Weideman06} works very well for the numerical inverse Laplace transform for a fixed $t$,
where the optimal Talbot contour is obtained by a numerical approach and the machine precision is obtained with $N=32$. We find that this optimal contour  $z(\theta,N/T_{\ell})$ still works well  for the contour integral in the current work for $t\in[B^{\ell-1},2B^{\ell}]$, see Figure \ref{eg31fig4}(b).
\end{remark}
}

\subsection{Error analysis}
In this subsection, we analyse the error of  Algorithm \ref{fast_conv}.
The error of the fast method depends only on the approximation $\hat{\omega}_{n}$
(see \eqref{omega-n-2}) to the contour integral \eqref{omega-n}. We have the following
error bound.
\begin{theorem}\label{thm3-1}
Let $\hat{\omega}_{n}$ be defined by \eqref{omega-n-2}, in which
${\lambda}_j^{(\ell)}$ and  ${w}_j^{(\ell)}$ are given by \eqref{weights-2},
and $F_{\omega}(\lambda)$ is defined by \eqref{BDF-F} or \eqref{GNGF-F}.
For  $t=n\tau$, $n\in[B^{\ell-1},2B^{\ell})$, and $n\geq b\mu_{\ell}t\geq 1$,
there exists  a positive constant  $C$  independent of $\tau,n$ and $N$ such that
\begin{eqnarray}
 |\hat{\omega}_{n}- {\omega}_{n}| \leq &&C\tau^{\alpha}
 \bigg[\frac{e^{a_0\mu_{\ell}t/2}}{e^{2d\pi/h}-1}+e^{(a_1-a_2\cosh(Nh))\mu_{\ell}t/2}\label{thm31}\\
 &&+e^{a_1\mu_{\ell}t/2}\left(1+\frac{b\mu_{\ell}t}
 {2(n-Q)}\cosh(Nh)\right)^{1-n+Q}\bigg],\nonumber
\end{eqnarray}
where   $Q$ is chosen such that
$(\tau\lambda)^{\alpha-1}\lambda^{\alpha}F_{\omega}(\lambda)/(1-\tau\lambda)^{Q+1}$ is bounded
for $\lambda(w)=\mu_{\ell}\left(1-\sin(\psi +iw)\right)+\sigma$,
$-d\leq \mathrm{Im}(w) \leq d$,
$0<\psi-d<\psi+d<\pi/2-\varphi$, $\psi,d>0$,
and $\varphi<\pi/2$.
\end{theorem}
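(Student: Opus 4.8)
The plan is to estimate the quadrature error $|\hat\omega_n-\omega_n|$ directly from the contour-integral representation \eqref{omega-n} by exploiting that the trapezoidal rule on a hyperbolic contour converges geometrically. First I would substitute the parametrization $\lambda(w)=\mu_\ell(1-\sin(\psi+iw))+\sigma$ into \eqref{omega-n}, writing $\omega_n=\frac{\tau}{2\pi i}\int_{\mathbb{R}}g_n(w)\dx[w]$ where $g_n(w)=(1-\tau\lambda(w))^{-1-n}F_\omega(\lambda(w))\lambda'(w)$, and $\hat\omega_n$ is the corresponding trapezoidal sum $\tau\,h\sum_{k=-N}^{N-1}g_n(\theta_k)$ (up to the $\mathrm{Im}$ bookkeeping). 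The total error then splits into a \emph{discretization error} of the infinite trapezoidal rule on $\mathbb{R}$ and a \emph{truncation error} from restricting $k$ to $|k|\le N$; this is the classical decomposition used in \cite{LopLubPS2005,SchLopLub06,Weideman06}, and I would quote those estimates rather than rederive them.

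For the discretization error I would invoke the standard bound for the trapezoidal rule applied to a function analytic in a strip $|\mathrm{Im}(w)|<d$: the error is $O\!\big(\frac{M_d}{e^{2\pi d/h}-1}\big)$, where $M_d$ controls $|g_n|$ on the strip boundary. The key point is to bound $|g_n(w)|$ for $|\mathrm{Im}(w)|\le d$: since $\mathrm{Re}(\tau\lambda(w))\le 0$ on the admissible strip (this is where $0<\psi-d<\psi+d<\pi/2-\varphi$ is used, guaranteeing the contour stays in the left half-plane away from the imaginary axis), one has $|1-\tau\lambda(w)|^{-1-n}\le e^{-c\,\mathrm{Re}(\tau\lambda(w))(n+1)/\dots}$-type control, and on the horizontal part of the contour $\mathrm{Re}(\lambda(w))\sim -\mu_\ell\cosh(\cdot)$, producing the $e^{a_0\mu_\ell t/2}$ factor (with $t=n\tau$) after using $n\le b\mu_\ell t$ is false — rather $n\ge b\mu_\ell t$ — to dominate. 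The factor $F_\omega(\lambda)$ from \eqref{BDF-F} or \eqref{GNGF-F} behaves like $\lambda^\alpha$ near $\lambda=0$ and polynomially at infinity; the weak singularity $\lambda^{\alpha-1}$ is exactly why the auxiliary integer $Q$ is introduced — dividing by $(1-\tau\lambda)^{Q+1}$ absorbs the growth so that $(\tau\lambda)^{\alpha-1}\lambda^\alpha F_\omega(\lambda)/(1-\tau\lambda)^{Q+1}$ is bounded, and this shifts $n$ to $n-Q$ in the final exponent, explaining the $(\cdots)^{1-n+Q}$ term. The overall $\tau^\alpha$ prefactor comes from the scaling $F_\omega(\lambda)\sim\lambda^\alpha$ together with $\lambda\sim\mu_\ell\sim N/(2T_\ell)\sim 1/\tau$.

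For the truncation error, cutting the sum at $|k|=N$ leaves a tail governed by the value of the integrand at the endpoint $w=Nh$, where $\mathrm{Re}(\lambda(Nh))\sim-\mu_\ell\cosh(Nh)$; combined with the $e^{-1-n}\log|1-\tau\lambda|$ decay this yields the two remaining terms $e^{(a_1-a_2\cosh(Nh))\mu_\ell t/2}$ and $e^{a_1\mu_\ell t/2}(1+\frac{b\mu_\ell t}{2(n-Q)}\cosh(Nh))^{1-n+Q}$, the second being the sharper bound when $n$ is large (using $(1+x)^{-m}$ instead of $e^{-mx}$). I expect the main obstacle to be the bookkeeping of constants $a_0,a_1,a_2,b,d$ in terms of the fixed geometric parameters $\psi,\varphi$ and confirming uniformity in $\tau,n,N$ — in particular verifying that on the \emph{whole} admissible strip (not just the real axis) the integrand inherits the required exponential decay, which forces a careful case split between the portion of the contour near $\lambda=0$ (where the $\lambda^{\alpha-1}$ singularity lives and $Q$ saves the day) and the portion going to infinity (where $\cosh$ growth dominates); everything else is a routine application of the trapezoidal-rule estimates from \cite{LopLubPS2005}. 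I would present the argument by first stating the two-part error splitting as a lemma-style display, then bounding $|g_n|$ on the strip in one paragraph, then assembling \eqref{thm31}.
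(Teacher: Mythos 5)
Your proposal follows essentially the same route as the paper: the paper's own proof is simply to follow Theorem 3 of \cite{LopLubPS2005} (see also Theorem 3.1 of \cite{SchLopLub06}), i.e.\ the standard splitting of the contour-quadrature error into the strip-analyticity discretization term and the truncation term at $|k|=N$, with $Q$ absorbing the growth/singularity of $F_{\omega}$ so that the power of $(1-\tau\lambda)$ is shifted from $n$ to $n-Q$ — exactly the structure you describe. Your sketch (modulo the garbled aside about $n\geq b\mu_{\ell}t$, where you do end up with the correct hypothesis) is a faithful expansion of that cited argument, so it matches the paper's approach.
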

\begin{proof}
We  follow the proof of Theorem 3 in \cite{LopLubPS2005} to
prove \eqref{thm31}, the detail is omitted here. See also Theorem 3.1 in
\cite{SchLopLub06}.
\end{proof}

Given a precision $\epsilon$, we can choose suitable parameters, such that
$|\hat{\omega}_{n}- {\omega}_{n}| \leq  C\tau \epsilon$,
see \cite{LopLubPS2005,SchLopLub06}.
Combining  \eqref{omega-n}, \eqref{omega-n-2}, and \eqref{eq:gauss-3} yields
\begin{equation}
|u^{(\ell)}_n-\widehat{u}^{(\ell)}_n|
=|\sum_{j=b^{(n)}_{\ell}}^{b^{(n)}_{\ell-1}-1}(\omega_{n-j}-\hat{\omega}_{n-j})u_j|
\leq C\tau (b^{(n)}_{\ell-1}-b^{(n)}_{\ell}) \|u\|_{\infty}\epsilon,
\end{equation}
which leads to
\begin{equation}\label{fast_conv_error}
|{}_FD^{(\alpha,n)}_{\tau,n_0}u-D^{(\alpha,n)}_{\tau}u|
\leq Ct_n \|u\|_{\infty}\epsilon.
\end{equation}
Denote by
\begin{equation}\label{fast-conv-2}
{}_FD^{(\alpha,n,m,\sigma)}_{\tau,n_0}u={}_FD^{(\alpha,n)}_{\tau,n_0}u
+ {\tau^{-\alpha}}\sum_{j=1}^{m}w^{(\alpha)}_{n,j}(u_j-u_0),
\end{equation}
where $w^{(\alpha)}_{n,j}$ is defined as in \eqref{s4:Dalf}.
Then from   \eqref{s4:eq-1} and \eqref{fast_conv_error},
the overall discretization error of  ${}_FD^{(\alpha,n,m,\sigma)}_{\tau,n_0}u$
is given by
\begin{equation}\label{eq:err}\begin{aligned}
|{}_FD^{(\alpha,n,m,\sigma)}_{\tau,n_0}u-k_{-\alpha}*u(t_n)|
=&|{}_FD^{(\alpha,n)}_{\tau,n_0}u-D^{(\alpha,n)}_{\tau}u
+ D^{(\alpha,n,m,\sigma)}_{\tau}u -k_{-\alpha}*u(t_n) |\\
\leq&|{}_FD^{(\alpha,n)}_{\tau,n_0}u-D^{(\alpha,n)}_{\tau}u|
+ |D^{(\alpha,n,m,\sigma)}_{\tau}u -k_{-\alpha}*u(t_n) |\\
\leq& C\left(\epsilon t_n \|u\|_{\infty}
+ \tau^pt_n^{\sigma_{m+1}-p-\alpha}+\tau^{\sigma_{m+1}+1}t_n^{-\alpha-1}\right).
\end{aligned}\end{equation}

The trapezoidal rule based on the optimal Talbot contour in \cite{Weideman06}  works well
(see \eqref{weights}), and needs  {fewer} quadrature points  to achieve the desired accuracy.

%\begin{example}\label{eg21}
%Let $D^{(-\alpha,n)}_{\tau}u= \sum_{j=0}^{n}\omega_{n-j}u_{j}$, $u_j=t_j+t_j^2$,
%where $\omega_n$ satisfies $\sum_{n=0}^{\infty}\omega_{n}z^n
%=\tau^{\alpha}(1-z)^{-\alpha}(1-\frac{\alpha}{2} - \frac{\alpha}{2}z)$, i.e., the second-order generalized Newton--Gregory formula is applied here.
%The discrete operators ${}_FD^{(\alpha,n)}_{\tau}$ and  ${}_FD^{(\alpha,n,0)}_{\tau,n_0}$
%are defined by \eqref{s4:Dalf}  and \eqref{s4:Dalf-fast}, respectively.
%\end{example}

Denote  the relative pointwise error $e^{(T)}_n$ as
$$e^{(T)}_n=|{}_FD^{(\alpha,n)}_{\tau,n_0}u-D^{(\alpha,n,0,\sigma)}_{\tau}u|
/|D^{(\alpha,n,0,\sigma)}_{\tau}u|,$$
where $u(t)=t^2+t$, $D^{(\alpha,n,0,\sigma)}_{\tau}$ is defined by  \eqref{s4:Dalf}, and
${}_FD^{(\alpha,n)}_{\tau,n_0}$ is obtained from Algorithm  \ref{fast_conv}  based on
the Talbot quadrature (see \eqref{weights}).
We can similarly define the relative pointwise error $e^{(H)}_n$
based on the hyperbolic contour quadrature  \eqref{weights-2}.

Figure \ref{eg31fig4} displays the errors $e^{(H)}_n$ and $e^{(T)}_n$,
where we set $\alpha=0.5,\tau=0.01,B=5$, and $n_0=50$ in the computation
with the generating function defined by \eqref{gngf} and $p=2$.
We can see that the fast method based on both the hyperbolic quadrature
and the Talbot quadrature shows highly accurate
numerical approximations, and the Talbot quadrature
uses  {fewer} quadrature points than that of the hyperbolic contour  quadrature defined
by \eqref{weights-2}.  In \cite{SchLopLub06}, a strategy for
choosing the parameters for the hyperbolic contour  quadrature was proposed,
which may help to reduce the number of  quadrature points.

If the generating function \eqref{gngf} with different $p$ is applied,
we obtained similar results to those reported above,
which are not shown here. For the generating function defined by \eqref{fbdf},
$n_0$ needs to be chosen up to 200 for $p=6$ to ensure high accuracy.
One can verify that the present algorithm works well for $\alpha\in (-1,1)$,
these results are not shown here.

\begin{figure}[!h]
\begin{center}
\begin{minipage}{0.47\textwidth}\centering
\epsfig{figure=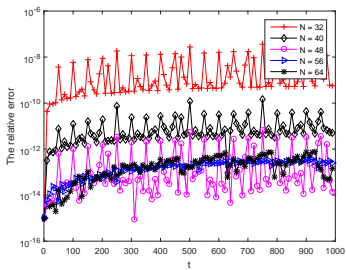,width=5cm}
\par {(a) The hyperbolic quadrature}
\end{minipage}
\begin{minipage}{0.47\textwidth}\centering
\epsfig{figure=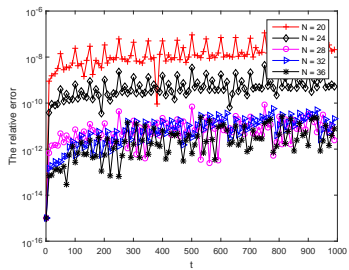,width=5cm}
\par {(b) The Talbot quadrature}
\end{minipage}
\end{center}
\caption{The relative pointwise errors  of the fast method based on the hyperbolic contour quadrature
and Talbot quadrature,  $\alpha=0.5,\tau=0.01,B=5$.\label{eg31fig4}}
\end{figure}

\section{Numerical examples}\label{sec:numerical}
In this section, two examples are presented to verify the effectiveness
of the present semi-implicit and fast method when it is applied to solve nonlinear FDEs.
All the algorithms are implemented using MATLAB 2017b,
run on a 3.40 GHz PC having 16GB RAM and Windows 7 operating system.
\begin{example}\label{s5-eg-1}
Consider the following scalar FODE
\begin{equation}\label{sec5:eq-1}
{}_{C}D^{\alpha}_{0,t}u(t) = - u(t) + f(u,t),{\quad}u(0)=u_0,{\quad}t\in(0, T],
\end{equation}
where $0<\alpha \leq 1$ and $f(u,t)$ is  a nonlinear function with respect $u$.
\end{example}
%\begin{equation}\label{s5:IMEX-fast}
%\widehat{D}_{\tau}^{\alpha,m,n}U= -U_n +  F_n - E^n_q(F)
%+ \sum_{j=1}^{m_f} w^{(f)}_{n,j}(F_j - F_0)
%-\kappa \Big(E^n_q(U) - \sum_{j=1}^{m_u} w^{(u)}_{n,j}(U_j - U_0)\Big),
%\end{equation}

We apply the semi-implicit method \eqref{s4:IMEX} to solve \eqref{s5-eg-1},
where we set $\lambda=-1$ in  \eqref{s4:IMEX} for this example.
We also apply the fully implicit method \eqref{s4:IM} to solve \eqref{s5-eg-1}
for comparison,  where the Newton iteration method is used to solve
the corresponding nonlinear system.
When we say the fast method \eqref{s4:IMEX} or \eqref{s4:IM} is applied,
we mean that the discrete operator ${D}_{\tau}^{(\alpha,n,m,\sigma)}$ in
\eqref{s4:IMEX} or \eqref{s4:IM} is replaced by ${}_F{D}_{\tau,n_0}^{(\alpha,n,m,\sigma)}$,
%with $\widehat{D}_{\tau}^{\alpha,m,n}$ defined by
%\begin{equation}\begin{aligned}\label{s4:Dalf-2}
%\widehat{D}_{\tau}^{\alpha,m,n}U=&\frac{1}{\tau^{\alpha}}\sum_{j=n-n_0}^{n}\omega^{(\alpha)}_{n-j}U_j
%+ \widehat{U}_n^{(1)}+\widehat{U}_n^{(1)}+\cdots+\widehat{U}_n^{(L)}\\
%&+\frac{U_0}{\tau^{\alpha}}\sum_{j=0}^{n}\omega^{(\alpha)}_{j}
%+\frac{1}{\tau^{\alpha}}\sum_{j=1}^{m}w^{(\alpha)}_{n,j}(U_j-U_0),
%\end{aligned}\end{equation}
where  ${}_F{D}_{\tau,n_0}^{(\alpha,n,m,\sigma)}$ is derived using Algorithm \ref{fast_conv}
(see \eqref{eq:gauss-3}).  We  always choose   the basis $B=5$, $n_0=50$,  and $N=32$
when the fast method based on the Talbot contour is applied.
For simplicity, we also set $m_u=m_f=m$ when  \eqref{s4:IMEX} or \eqref{s4:IM}  is applied.

The following three cases are considered in this example.
\begin{itemize}
  \item Case I: For  $f=-2u$ and $u_0=3$, the exact solution of \eqref{sec5:eq-1} is
  $u(t)=E_{\alpha}(-3t^{\alpha}),$
  where $E_{\alpha}(t)=\sum_{k=0}^{\infty}\frac{t^{k}}{\Gamma(k\alpha+1)}$ is the  Mittag--Leffler function  \cite{Pod-B99}.
  \item Case II: Let  $f=-u^2+g(t)$. Choose a suitable
  initial condition and $g(t)$ such that the exact solution of \eqref{sec5:eq-1} is
  $u(t)=2+t+{t^2}/{2}+{t^3}/{3}+{t^4}/{4}.$
  \item Case III: Let  $f=u(1-u^2)+2\cos(2\pi t)$ with
  the initial condition  taken as $u_0=1$.
\end{itemize}

The maximum error is defined by
$$\|e\|_{\infty}=\max_{0\leq n \leq {T}/\tau}\big|e_n\big|,  {\quad}e_n=u(t_n)-U_n,$$
where $U_n$ is either the numerical solution from the fast method or the direct method.

%The purpose of Case I is to verify the effectiveness of the present fast method
%for non-smooth solutions.
%We demonstrate that adding correction terms improves the accuracy of numerical
%solutions significantly, see   Figure \ref{eg51fig1}. We shown in Tables  \ref{s5:tb1}
%and \ref{s5:tb2}, respectively, the maximum error and the error at $t=40$ for $\alpha=0.5$.
%We can see that the adding correction terms   increases accuracy and convergence rate.

%\begin{figure}[!h]
%\begin{center}
%\begin{minipage}{0.47\textwidth}\centering
%\epsfig{figure=IMEX_alf(01)kappa(2)m(0to3),width=5.3cm} \par {(a) $\alpha=0.1$.}
%\end{minipage}
%\begin{minipage}{0.47\textwidth}\centering
%\epsfig{figure=IMEX_alf(05)kappa(2)m(0to3),width=5.3cm} \par {(b) $\alpha=0.5$.}
%\end{minipage}
%\end{center}
%\caption{The pointwise  errors of the semi-implicit method \eqref{s4:IMEX} with fast convolution,
%Case I, $\tau=0.01,B=5,N=32$.\label{eg51fig1}}
%\end{figure}

The purpose of Case I is to verify the effectiveness of the present semi-implicit and fast method
for non-smooth solutions.
We demonstrate that adding correction terms improves the accuracy of the numerical
solutions significantly, see    Tables  \ref{s5:tb1} and \ref{s5:tb2},
where the maximum relative error and the relative  error at $t=40$ for $\alpha=0.4$ are displayed.
We see that adding correction terms increases the overall accuracy and convergence rate,
readers can refer to \cite{DieFord06,Lub86,ZengZK17} for related results.
\begin{table}[!h]
\caption{{The maximum relative error $\|e\|_{\infty}/\|u\|_{\infty}$ of the semi-implicit method \eqref{s4:IMEX}
with fast convolution, Case I, $\sigma_k=k\alpha$,  $\alpha=0.4$, $\kappa=2$, $B=5$, $N=32$,
and ${T=40}$.}}\label{s5:tb1}
%\begin{center}
\centering\footnotesize
\begin{tabular}{|c|c|c|c|c|c|c|c|c|c|c|c|c|}
\hline
 $\tau$ & $m=0$ & Order& $m=1$ & Order& $m=2$ & Order & $m=3$   & Order\\
 \hline
$2^{-7} $ &7.6319e-2&      &7.0844e-4&      &8.2078e-5&      &6.4330e-5&      \\
$2^{-8} $ &6.4796e-2&0.2361&4.8320e-4&0.5520&4.3412e-5&0.9189&3.2473e-5&0.9862\\
$2^{-9} $ &5.3713e-2&0.2706&3.1526e-4&0.6161&2.1695e-5&1.0007&1.5317e-5&1.0841\\
$2^{-10}$ &4.3664e-2&0.2988&1.9873e-4&0.6657&1.0359e-5&1.0664&6.7300e-6&1.1865\\
$2^{-11}$ &3.4942e-2&0.3215&1.2208e-4&0.7030&4.7706e-6&1.1187&2.8040e-6&1.2631\\
\hline
\end{tabular}
%\end{center}
\end{table}

\begin{table}[!h]
\caption{{The relative error $\left|e_n\right|/\|u\|_{\infty}$ of  the semi-implicit method \eqref{s4:IMEX}
with fast convolution at ${t=40}$, Case I, $\sigma_k=k\alpha$,  $\alpha=0.4$, $\kappa=2$,
$B=5$, and $N=32$.}}\label{s5:tb2}
%\begin{center}
\centering\footnotesize
\begin{tabular}{|c|c|c|c|c|c|c|c|c|c|c|c|c|}
\hline
 $\tau$ & $m=0$ & Order& $m=1$ & Order& $m=2$ & Order & $m=3$   & Order\\
 \hline
$2^{-7} $&1.8929e-6&      &8.8630e-8&      &3.4878e-8 &      &1.8337e-8 &      \\
$2^{-8} $&9.4643e-7&1.0000&3.1027e-8&1.5143&1.0813e-8 &1.6895&6.6615e-9 &1.4609\\
$2^{-9} $&4.7322e-7&1.0000&1.0765e-8&1.5272&3.2796e-9 &1.7212&2.2083e-9 &1.5929\\
$2^{-10}$&2.3661e-7&1.0000&3.7238e-9&1.5315&9.7775e-10&1.7460&6.8314e-10&1.6927\\
$2^{-11}$&1.1830e-7&1.0000&1.2906e-9&1.5287&2.8842e-10&1.7613&2.0452e-10&1.7400\\
\hline
\end{tabular}
%\end{center}
\end{table}

We now fix the fractional order $\alpha=0.2$ and let the time
stepsize change. In  Figure \ref{eg51fig2} (a), we let $\kappa = 0$
and $\tau=1.4\times 10^{-3},1.5\times 10^{-3},1.59\times 10^{-3},
1.6\times 10^{-3},1.7\times 10^{-3}$ in the computations. We can also see from Table
\ref{s4:stability-interval} that the method \eqref{s4:IMEX}  is stable
if $\tau< 1.59 \times 10^{-3}$.
We  observe that the numerical solutions become  unstable when $\tau>1.59 \times 10^{-3}$.
We also observe from Figure \ref{eg51fig2} (b) that the
numerical solutions diverge for $\kappa=0.4$ when $\tau>1.1\times 10^{-2}$.
These results verify the theoretical findings  shown in Table \ref{s4:stability-interval}.

\begin{figure}[!h]
\begin{center}
\begin{minipage}{0.47\textwidth}\centering
\epsfig{figure=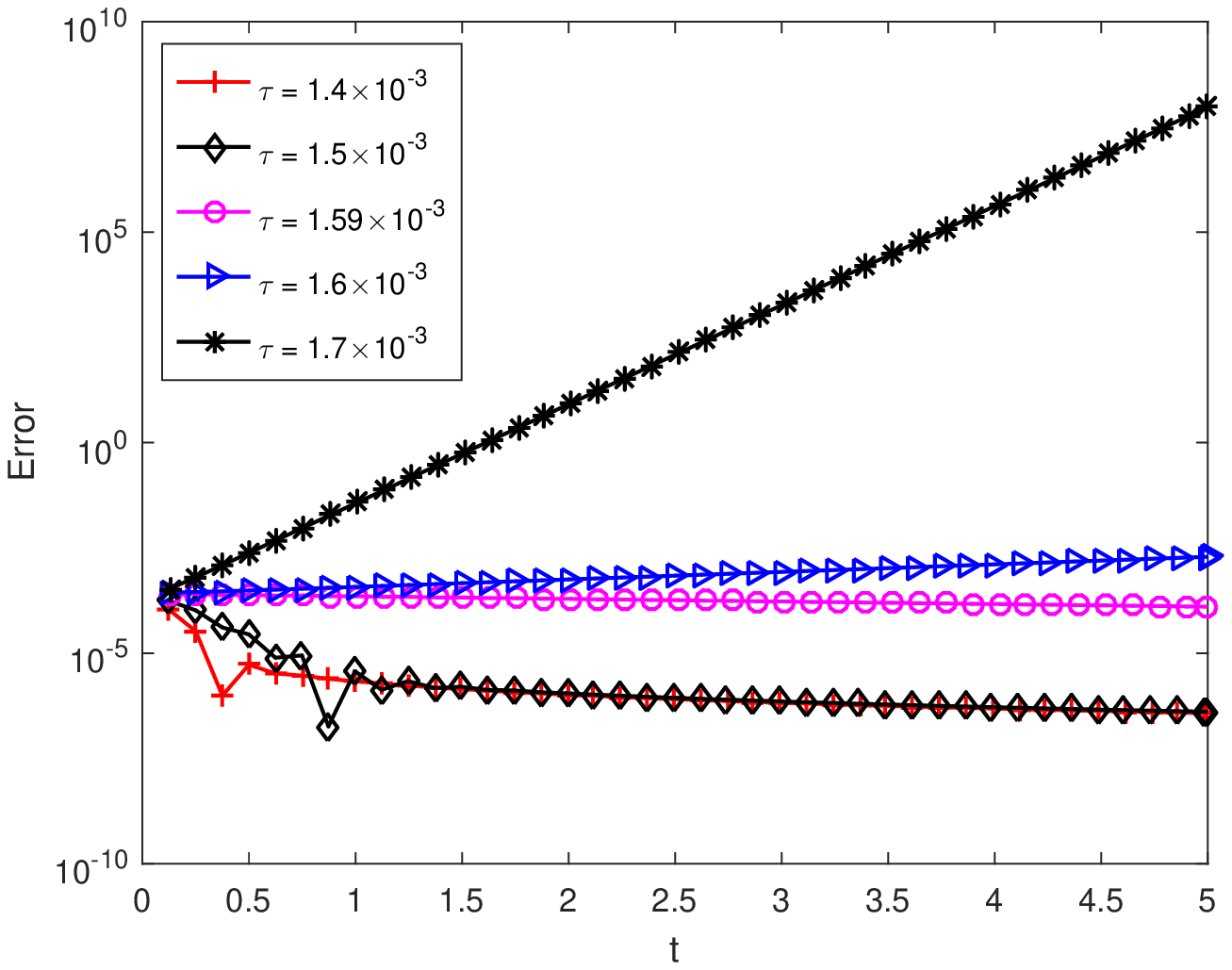,width=5cm} \par {(a) $\kappa=0$.}
\end{minipage}
\begin{minipage}{0.47\textwidth}\centering
\epsfig{figure=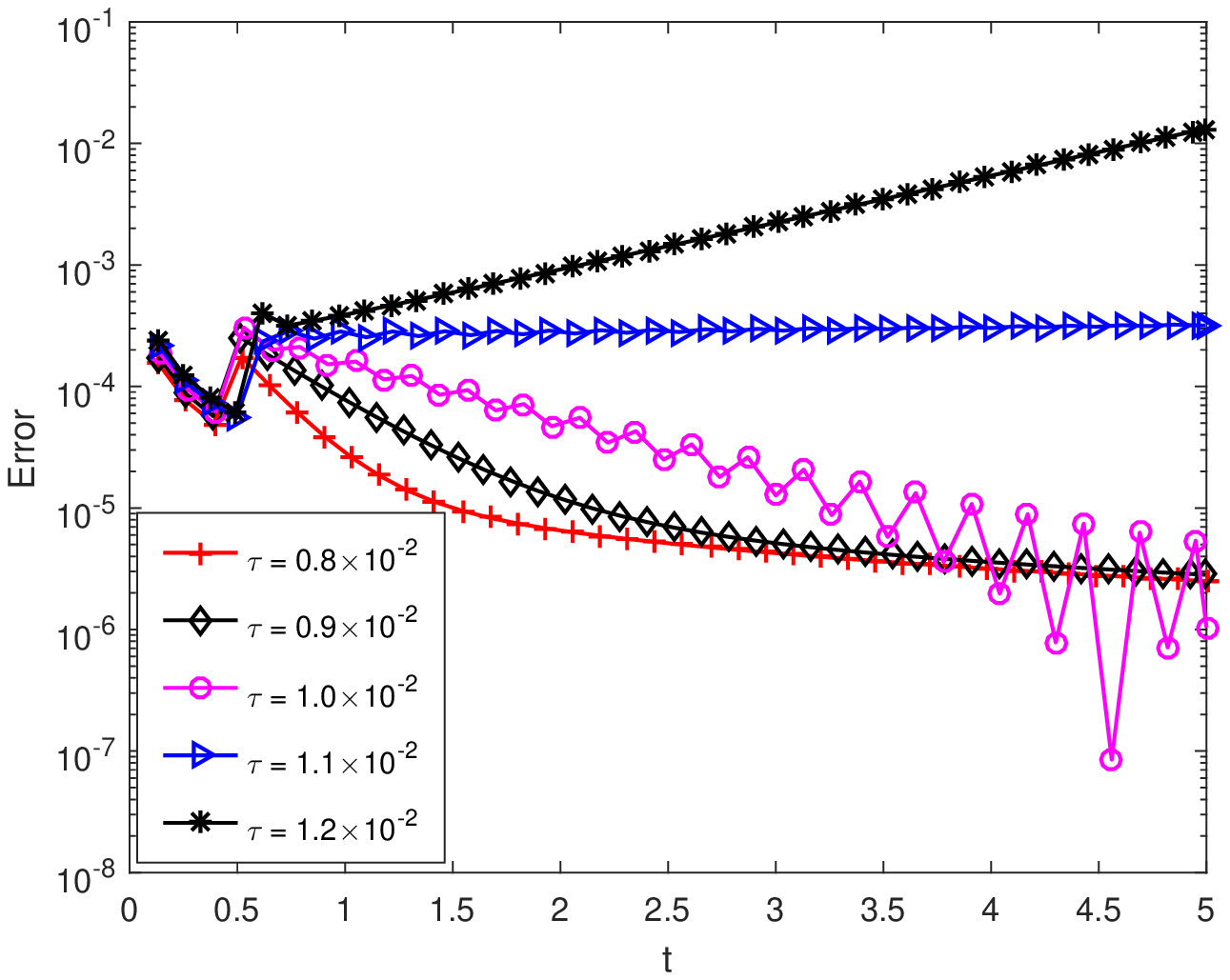,width=5cm} \par {(b) $\kappa=0.4$.}
\end{minipage}
\end{center}
\caption{The pointwise  errors of the semi-implicit method \eqref{s4:IMEX}
with different time stepsizes, Case I,
 $\alpha=0.2,m=1,B=5,N=32$.\label{eg51fig2}}
\end{figure}

For Case II, we solve a  nonlinear FODE with $f=-u^2+g(t)$ and $\px[u]f \in [-434\frac{5}{6},-4]$ for $t\in[0,5]$.
Here we give a simple   guideline for the choice of $\kappa$ in the method \eqref{s4:IMEX}.
From the linear stability analysis of the method \eqref{s4:IMEX}, we have that $\px[u]f$
plays a similar role  as $\rho$ does in  \eqref{s4:IMEX-3}, which can be obviously observed from the second semi-implicit method  \eqref{s4:IMEX2}.
Theorem \ref{thm:4-2} provides   a practical guideline for selecting $\kappa$ in real applications,
and the stability region of  \eqref{s4:IMEX} becomes larger as $\kappa$ increases, see Figure \ref{fig-stability-2}. From \eqref{thm:eq:4-2}, we can choose
$\kappa= \frac{1}{4}\max({-1-3\px[u]F})=325.875$ for the computations.
The relative absolute
errors $|e^n|/\|u\|_{\infty}$ at $t=5$ for different fractional orders are shown
in Table \ref{s5:tb3}. We can see that the present semi-implicit method
exhibits good stability and second-order accuracy for Case II.

%As is known, $\px[u]F$ plays the similar role as $\rho$ does in the semi-implicit
%scheme \eqref{s4:IMEX-3}. From \eqref{thm:eq:4-2}, we can choose
%$\kappa=\kappa_0=\frac{1}{4}\max({-1-3\px[u]F})=325.875$ in the computation. This choice
%makes the semi-implicit method unconditionally stable for all $\tau>0$. The relative absolute
%errors $|e^n|/\|u\|_{\infty}$ at $t=5$ for different fractional orders are shown
%in Table \ref{s5:tb3}. We can see that the present semi-implicit method
%shows good stability and second-order accuracy.

\begin{table}[!h]
\caption{{The relative error $|e^n|/\|u\|_{\infty}$  of  the semi-implicit method \eqref{s4:IMEX}
with fast convolution at $t=5$, Case II, $\sigma_1=1$,  $m=1$,
$B=5,N=32$.}}\label{s5:tb3}
%\begin{center}
\centering\footnotesize
\begin{tabular}{|c|c|c|c|c|c|c|c|c|c|c|c|c|}
\hline
 $\tau$ & $\alpha=0.2$ & Order& $\alpha=0.5$ & Order& $\alpha=0.8$ & Order  \\
 \hline
$2^{-5}$&2.8685e-4&      &2.8700e-4&      &2.8694e-4&       \\
$2^{-6}$&7.2141e-5&1.9914&7.2180e-5&1.9914&7.2156e-5&1.9915 \\
$2^{-7}$&1.8089e-5&1.9957&1.8099e-5&1.9957&1.8092e-5&1.9958 \\
$2^{-8}$&4.5288e-6&1.9979&4.5313e-6&1.9979&4.5296e-6&1.9979 \\
$2^{-9}$&1.1330e-6&1.9989&1.1337e-6&1.9989&1.1332e-6&1.9989 \\
\hline
\end{tabular}
%\end{center}
\end{table}

%\begin{table}[!h]
%\caption{{The relative error $|e^n|/\|u\|_{\infty}$  of  the semi-implicit method \eqref{s4:IMEX}
%with fast convolution at $t=5$, Case II, $\sigma_1=1$,  $m=1$,
%$B=5,N=32$.}}\label{s5:tb3-2}
%%\begin{center}
%\centering\footnotesize
%\begin{tabular}{|c|c|c|c|c|c|c|c|c|c|c|c|c|}
%\hline
% $\tau$ & $\alpha=0.2$ & Order& $\alpha=0.5$ & Order& $\alpha=0.8$ & Order  \\
% \hline
%$2^{-5}$&2.8685e-4&      &2.8700e-4&      &2.8694e-4&       \\
%$2^{-6}$&7.2141e-5&1.9914&7.2180e-5&1.9914&7.2156e-5&1.9915 \\
%$2^{-7}$&1.8089e-5&1.9957&1.8099e-5&1.9957&1.8092e-5&1.9958 \\
%$2^{-8}$&4.5288e-6&1.9979&4.5313e-6&1.9979&4.5296e-6&1.9979 \\
%$2^{-9}$&1.1330e-6&1.9989&1.1337e-6&1.9989&1.1332e-6&1.9989 \\
%\hline
%\end{tabular}
%%\end{center}
%\end{table}

Next, we fix   the stepsize $\tau=1/256$ and set different
$\kappa=326,350,400,500,1000$ in the computation for
$\alpha=0.2$ and  $0.8$; the pointwise errors are shown
in Figure  \ref{eg51fig2-2}.
%From Theorem \ref{thm:4-2}, the semi-implicit method \eqref{s4:IMEX} is unconditionally stable  when
%$\kappa>\frac{1}{4}\max({-1-3\px[u]F})=325.875$.
The results
shown in Figure \ref{eg51fig2-2} confirm the theoretical analysis displayed in Theorem \ref{thm:4-2}.
It can also be observed that very large $\kappa$ may negatively impact the accuracy of the
numerical solutions. One remedy is to use a smaller stepsize, such that $\kappa$ can be
chosen suitably large to ensure both stability and accuracy.
Another choice is to use high-order penalty terms $E^n_q(U)(q\geq 3)$ in \eqref{s4:IMEX},
but the unconditional stability of the derived method may not be guaranteed even for very large $\kappa$,
i.e., $q=3$.
%, see  \eqref{thm:eq:4-2}.
Higher order stable semi-implicit methods will be studied in more detail in   our future work.
%In real applications, we should consider how to balance
%the accuracy and stability by choosing a suitable $\kappa$.
%One option is to use a suitably small stepsize, another choice is to

\begin{figure}[!h]
\begin{center}
\begin{minipage}{0.47\textwidth}\centering
\epsfig{figure=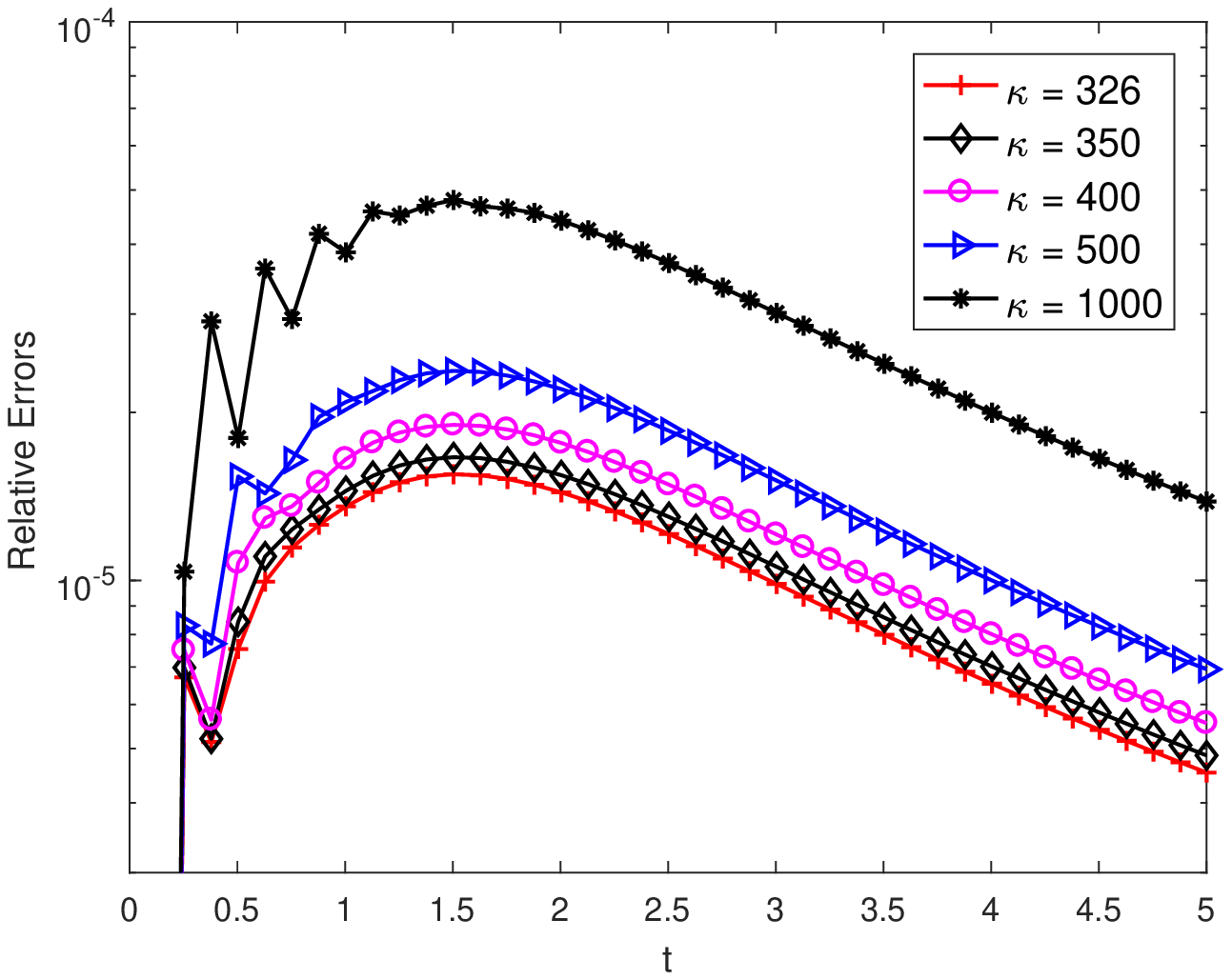,width=5cm}
\par {(a) $\alpha=0.2$.}
\end{minipage}
\begin{minipage}{0.47\textwidth}\centering
\epsfig{figure=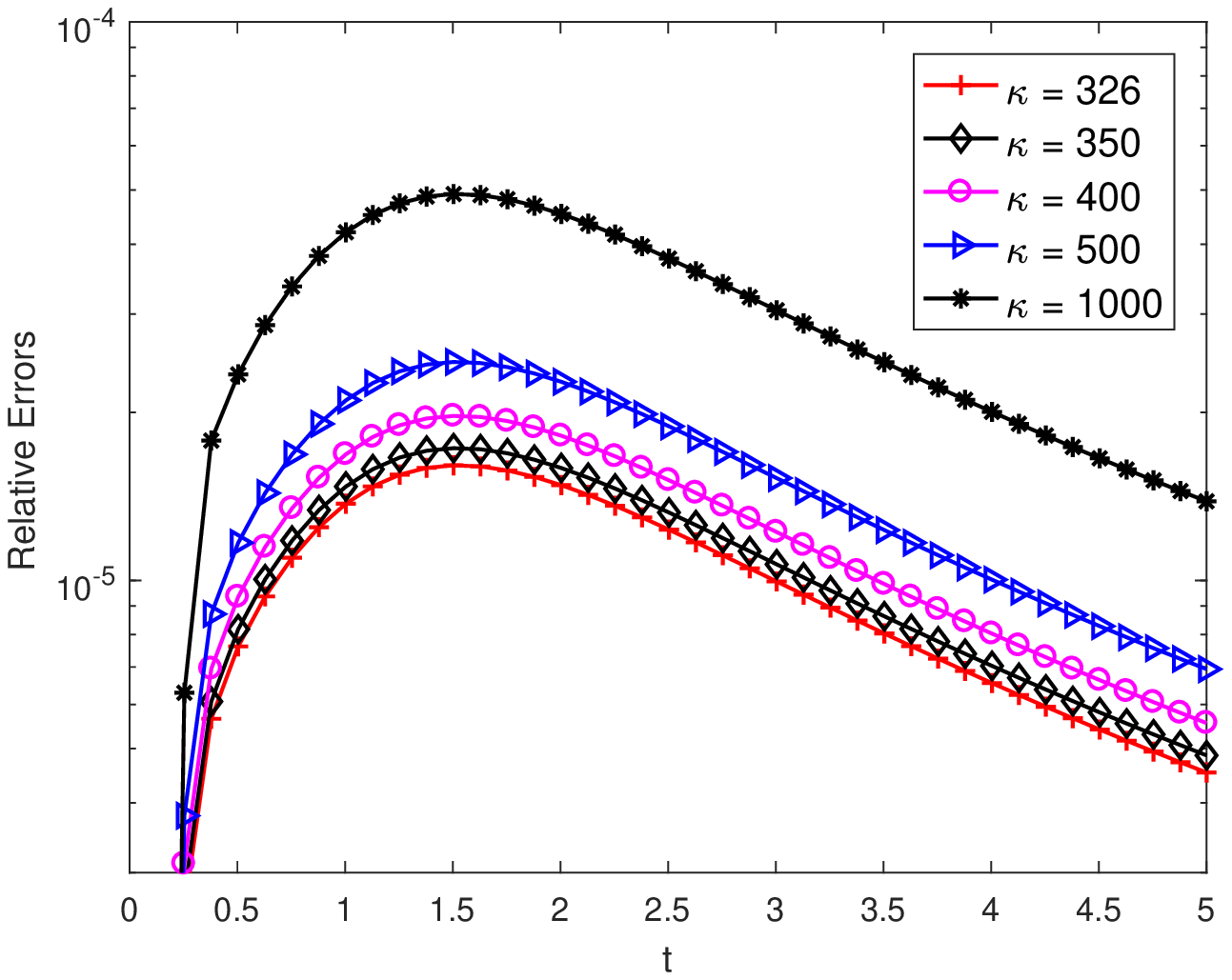,width=5cm}
\par {(b) $\alpha=0.8$.}
\end{minipage}
\end{center}
\caption{The relative pointwise  errors of the semi-implicit method \eqref{s4:IMEX}
with different $\kappa$, Case II,
 $m=1,B=5,N=32$.\label{eg51fig2-2}}
\end{figure}

Figures \ref{eg51fig3} (a)--(c) show  the numerical solutions
for Case III, where  the exact solution is not explicitly given.
We can see that the solutions
are bounded as theoretically expected \cite{WangXiao2015}.
Figure \ref{eg51fig3} (d) shows that the fast method is much  {more} efficient than
the direct method. We note that the fully implicit method with fast convolution
has almost a similar computational cost as the semi-implicit method with fast convolution.
Compared with the computational cost from the discrete convolution,
the computational cost from the Newton iteration method
to obtain the solution in the fully implicit method
can almost be ignored here, which is different from solving FPDEs as shown in the following example.

\begin{figure}[!h]
\begin{center}
\begin{minipage}{0.47\textwidth}\centering
\epsfig{figure=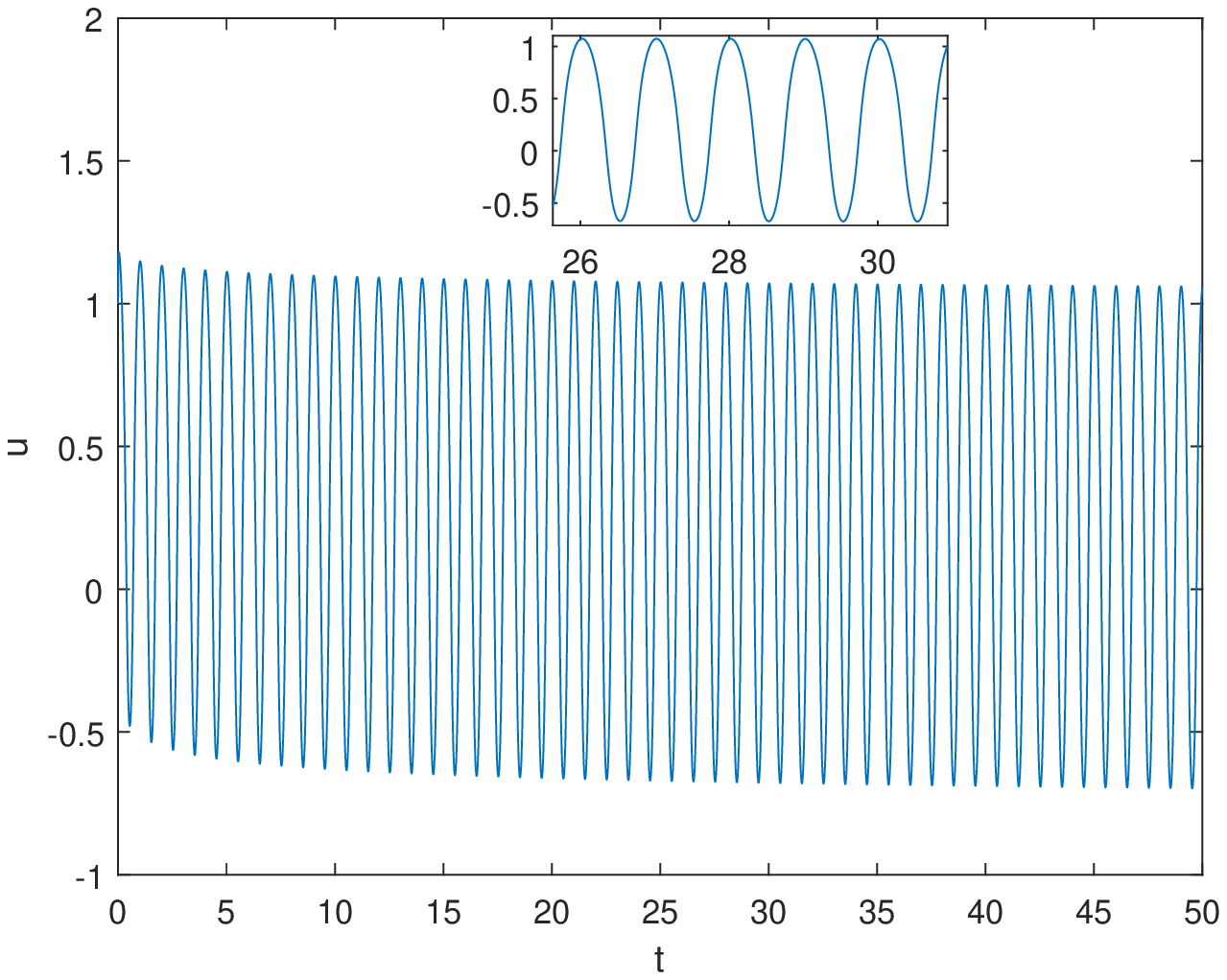,width=5cm} \par {(a) $\alpha=0.2$.}
\end{minipage}
\begin{minipage}{0.47\textwidth}\centering
\epsfig{figure=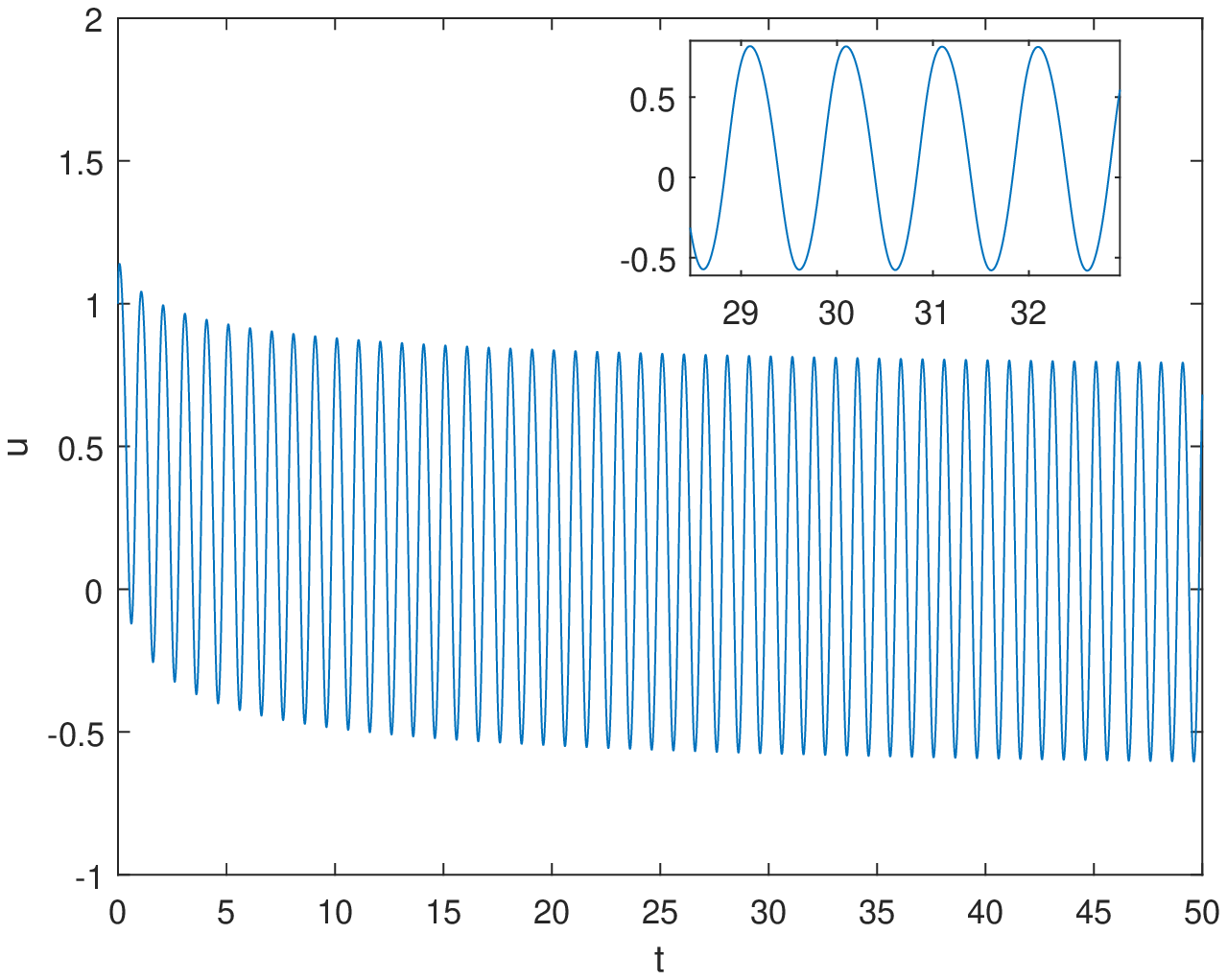,width=5cm} \par {(b) $\alpha=0.5$.}
\end{minipage}\\
\begin{minipage}{0.47\textwidth}\centering
\epsfig{figure=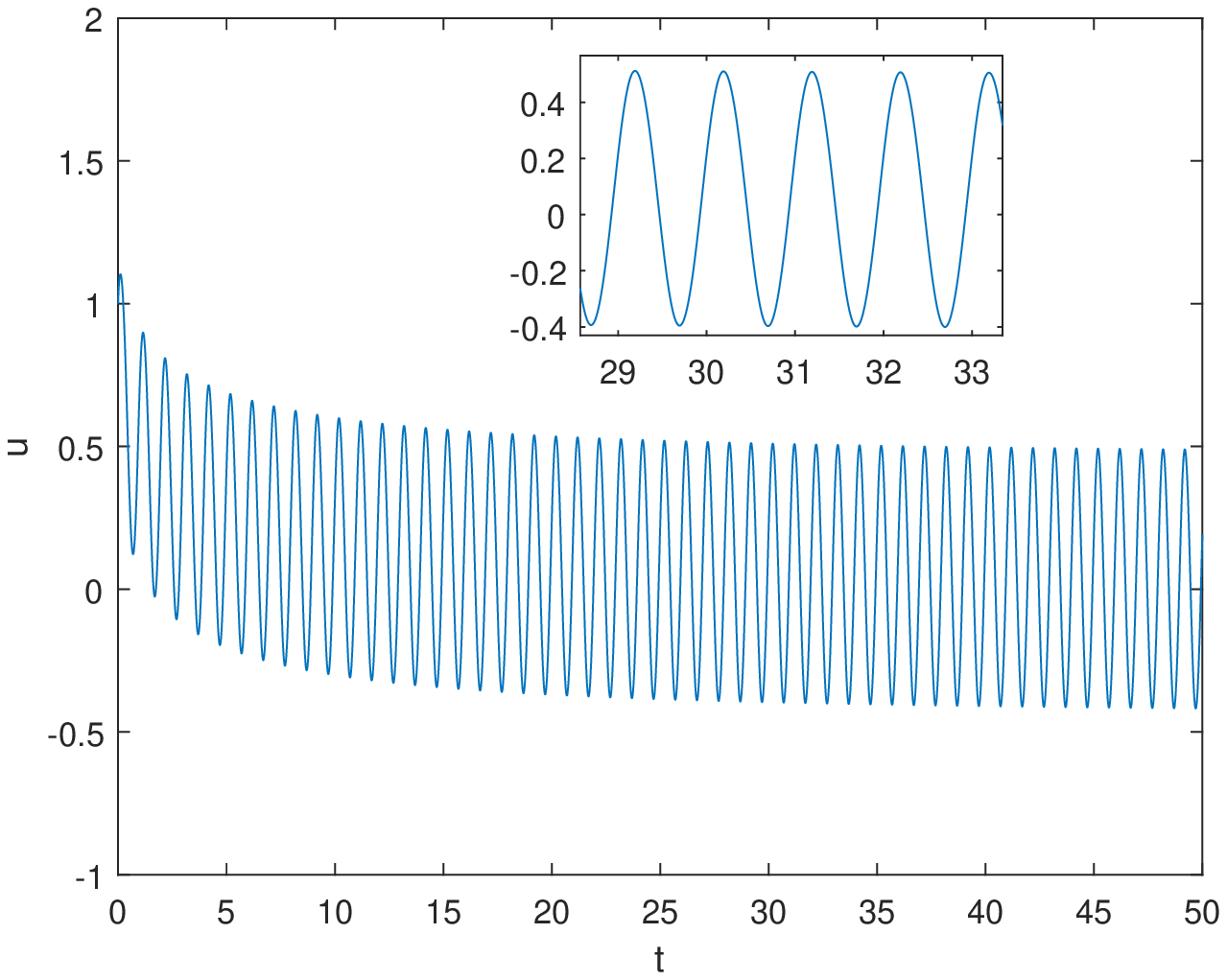,width=5cm} \par {(c) $\alpha=0.8$.}
\end{minipage}
\begin{minipage}{0.47\textwidth}\centering
\epsfig{figure=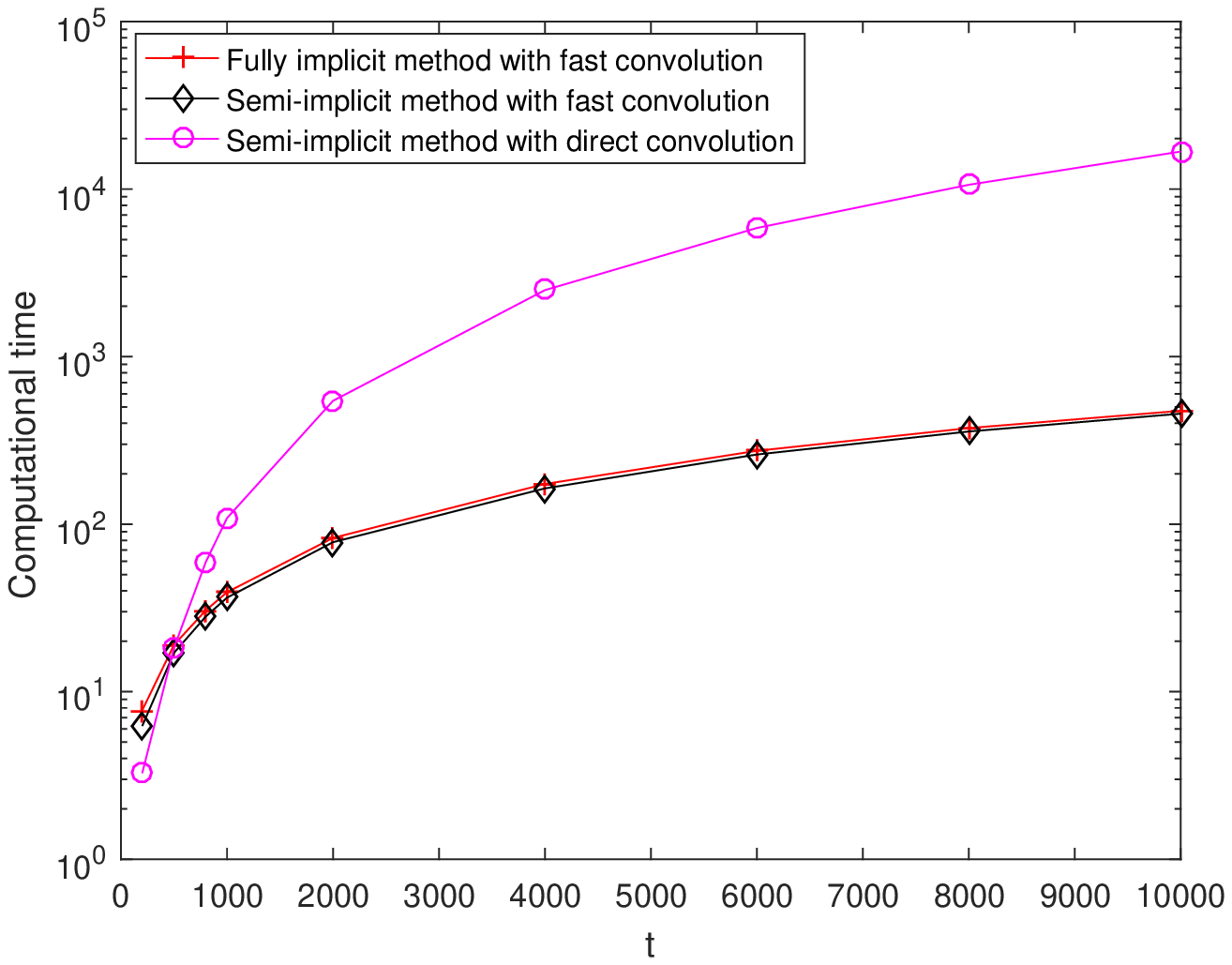,width=5cm} \par {(d) Computational time.}
\end{minipage}
\end{center}
\caption{Numerical solutions and the computational cost of different methods, Case II,
 $\kappa=3,\tau=0.005,m=1,B=5,N=32$.\label{eg51fig3}}
\end{figure}

{
For Case III,  we do not know the analytical solution and the
reference solutions are obtained using a smaller stepsize $\tau=2^{-13}$ with two correction terms.
Table \ref{s5:tb2-2} shows the relative error at $t=50$ with one correction
term, where second-order accuracy is observed for different fractional orders.
}
\begin{table}[!h]
\caption{{The relative error $\left|e_n\right|/\|u\|_{\infty}$ of  the semi-implicit method \eqref{s4:IMEX}
with fast convolution at ${t=50}$, Case III, $\sigma_k=k\alpha$,  $m=1$, $\kappa=3$,
$B=5$, and $N=32$.}}\label{s5:tb2-2}
%\begin{center}
\centering\footnotesize
\begin{tabular}{|c|c|c|c|c|c|c|c|c|c|c|c|c|}
\hline
 $\tau$ & $\alpha=0.1$ & Order&$\alpha=0.2$ & Order& $\alpha=0.5$ & Order& $\alpha=0.8$ & Order \\
 \hline
$2^{-5}$&1.1791e-2&      &1.2422e-2&      &1.8310e-2&      &1.3701e-2&      \\
$2^{-6}$&2.8763e-3&2.0354&3.4744e-3&1.8381&5.3884e-3&1.7647&2.8694e-3&2.2555\\
$2^{-7}$&7.2408e-4&1.9900&8.6976e-4&1.9981&1.3788e-3&1.9664&6.4843e-4&2.1458\\
$2^{-8}$&1.8166e-4&1.9949&2.1738e-4&2.0004&3.5033e-4&1.9767&1.5312e-4&2.0823\\
$2^{-9}$&4.5338e-5&2.0025&5.4153e-5&2.0051&8.8143e-5&1.9903&3.7019e-5&2.0483\\
\hline
\end{tabular}
%\end{center}
\end{table}

\begin{example}\label{s5-eg-2}
Consider the following system of FPDEs
\begin{equation}\label{sec5:eq-2}\left\{\begin{aligned}
&{}_{C}D^{\alpha_1}_{0,t}u(t) = \mu_1 \Delta u(t) + f(u,v,x,y,t),\\
&{}_{C}D^{\alpha_2}_{0,t}w(t) = \mu_2 \Delta v(t) + g(u,v,x,y,t)
\end{aligned}\right.  \end{equation}
subject to the homogenous boundary conditions,
where $0< \alpha_1,\alpha_2\leq 1$, $\mu_1,\mu_2>0$, $u(t)=u(x,y,t)$,
 $v(t)=v(x,y,t)$, and $(x,y,t)\in (0,1)\times(0,1)\times(0,T]$.
The initial conditions are taken as $u(0)=u(x,y,0)=u_0(x,y)$ and
$v(0)=v(x,y,0)=v_0(x,y)$.
\end{example}

We focus on the three time-stepping methods for \eqref{sec5:eq-2}; the 2D
space is   discretized using the   standard second-order
finite volume method (FVM) based on a uniform grid.
The second-order generating function $\omega(2,\alpha,\tau,z)=\left(\frac{1-z}{\tau}\right)^{\alpha}
(1+\frac{\alpha}{2}-\frac{\alpha}{2}z)$ is applied, see \eqref{gngf}.
For simplicity, no correction terms are applied in this example.

We first make a slight modification of the semi-implicit time-stepping method \eqref{s4:IMEX}
to   \eqref{sec5:eq-2}, which yields the following semi-discrete method (see \eqref{s23:fode-2})
\begin{equation}\label{sec5:eq-3}\left\{\begin{aligned}
&D^{(\alpha_1,n,0,\sigma)}_{\tau}U = \mu_1 \Delta U_n+F_n - E^n_2(F)-\kappa_1  E^{n}_2(U),\\
%&{\qquad\qquad\qquad}-\kappa_1 \Big(E^n_2(U) - \sum_{j=1}^{m_u} w^{(u)}_{n,j}(U_j - U_0)\Big),\\
&D^{(\alpha_2,n,0,\sigma)}_{\tau}V = \mu_2\Delta V_n+G_n - E^n_2(G)-\kappa_2  E^{n}_2(V),\\
%&{\qquad\qquad\qquad}-\kappa_2 \Big(E^n_2(V) - \sum_{j=1}^{m_v} w^{(v)}_{n,j}(V_j - V_0)\Big),
\end{aligned}\right.  \end{equation}
%\begin{eqnarray}
%&&D^{(\alpha_1,n,m)}_{\tau}U = \mu_1 \Delta U_n
%+F_n - E^n_2(F)+ \sum_{j=1}^{m_f} w^{(f)}_{n,j}(F_j - F_0)\label{sec5:eq-3}\\
%&&{\qquad\qquad\qquad}-\kappa \Big(E^n_2(U) - \sum_{j=1}^{m_u} w^{(u)}_{n,j}(U_j - U_0)\Big),
%\nonumber\\
%&&D^{(\alpha_1,n,m)}_{\tau}V = \mu_2 \Delta V_n
%+G_n - E^n_2(G)+ \sum_{j=1}^{m_f} w^{(f)}_{n,j}(G_j - G_0)\label{sec5:eq-4}\\
%&&{\qquad\qquad\qquad}-\kappa \Big(E^n_2(V) - \sum_{j=1}^{m_v} w^{(v)}_{n,j}(V_j - V_0)\Big),
%\nonumber
%\end{eqnarray}
where $F_n=f(U_n,V_n,x,y,t_n)$,   $G_n=g(U_n,V_n,x,y,t_n)$,
$D^{(\alpha,n,0,\sigma)}_{\tau}$ is defined by \eqref{s4:Dalf},
and  $E^n_2$ is defined by \eqref{s4:Ep}.

Applying  the FVM to the space discretization of each equation in \eqref{sec5:eq-3},
we obtain the fully discrete semi-implicit FVM with direct convolution.
If   $D^{(\alpha_k,n,0,\sigma)}_{\tau}$ in \eqref{sec5:eq-3} is replaced by
${}_FD^{(\alpha_k,n,0,\sigma)}_{n_0,\tau}$,
where ${}_FD^{(\alpha_k,n,0,\sigma)}_{n_0,\tau}$
is defined by  \eqref{fast-conv-2}, then the  fully discrete
semi-implicit FVM with fast convolution is derived.
The fully discrete implicit FVM with direct convolution can be derived by applying the time
discretization \eqref{s4:IM} to each equation of \eqref{sec5:eq-3} with space approximated
by the FVM. When the new fast method is applied, we always set $B=5,n_0=50,N=32$.
\begin{itemize}[leftmargin=*]
  \item Case I: Let $u_0=v_0=\sin(\pi x)\sin(\pi y)$, $f=-vu^2 +\hat{f}(x,y,t)$,
  and $g=-v^2u+\hat{g}(x,y,t)$.  Choose suitable  $\hat{f}$ and $\hat{g}$
  such that the exact solution  to \eqref{sec5:eq-2} is
  $$u = E_{\alpha_1}(-t^{\alpha_1})\sin(\pi x)\sin(\pi y),{\quad}
  v =E_{\alpha_2}(-t^{\alpha_2})\sin(\pi x)\sin(\pi y). $$
  \item  Case II: Let $u_0=x(1-x)y(1-y),v_0=\sin(\pi x)\sin(\pi y)$, $f=-u^2v$, and $g=-v^2u$.
\end{itemize}

Table \ref{s5:tb4} compares the efficiency and accuracy of the  implicit FVM
and the semi-implicit FVM for Case I, in which the nonlinear algebraic system
arising due to the coupling of the
%originated  from the
implicit FVM is
solved by  fixed point iteration. Obviously, the semi-implicit method is faster
than the implicit method, but is a little less accurate than the implicit method, which
can be explained from the truncation error $R^n$ of the semi-implicit method defined by \eqref{s4:Rn},
i.e., $R^n=O(\tau^2t_n^{\sigma_{1}-2-\alpha})+O(\tau^{\sigma_{1}+1}t_n^{-\alpha-1})
+O(\tau^2t_n^{\delta_{1}-2})+O(\tau^2t_n^{\sigma_{1}-2})$.
The time truncation error of the implicit method  contains only the first two terms of $R^n$.
However, the truncation error of the semi-implicit method also contains
the third and fourth terms of $R^n$ that may play dominant roles here,  which leads to
a little less accurate numerical solution of the semi-implicit method.
Because of the symmetry, the same accuracy of the numerical solutions of $u$ and $v$ are observed.

%\begin{table}[!h]
%\caption{Comparison of the semi-implicit FVM and the
% implicit FVM, Case I,  $T=2$, $\alpha_1=\alpha_2=0.5$, $\kappa_1=\kappa_2=2$,
%and $h=1/256$.}\label{s5:tb4}
%\centering\footnotesize
% \begin{tabular}{|c|ccc|ccc|}
%\hline
%\multicolumn{1}{|c|}{ } &
%\multicolumn{3}{|c|}{Semi-implicit method}&\multicolumn{3}{|c|}{Fully implicit method} \\\hline
%$1/\tau$&$L^2$-error(u)&Order & Time(s) &$L^2$-error(u)&Order & Time(s)\\\hline
%8  &8.8202e-4&      &0.7331&5.9450e-5&      &3.3517  \\
%16 &2.2203e-4&1.9900&1.5142&2.6788e-5&1.1501&6.5332  \\
%32 &5.6555e-5&1.9730&3.4880&1.3454e-5&0.9936&13.318  \\
%48 &2.4377e-5&2.0755&5.9117&9.7126e-6&0.8036&20.602   \\
%64 &1.2608e-5&2.2918&8.7127&8.0249e-6&0.6635&27.735   \\
%\hline
%\end{tabular}
%\end{table}

\begin{table}[!h]
\caption{Comparison of the semi-implicit FVM and the
 implicit FVM at $t=2$,  Case I, $\mu_1=\mu_2=1$, $\alpha_1=\alpha_2=0.5$, $\kappa_1=\kappa_2=2$,
and $h=1/256$.}\label{s5:tb4}
\centering\footnotesize
 \begin{tabular}{|c|ccc|ccc|}
\hline
\multicolumn{1}{|c|}{ } &
\multicolumn{3}{|c|}{Semi-implicit method}&\multicolumn{3}{|c|}{Fully implicit method} \\\hline
$1/\tau$&$L^2$-error(u)&$L^2$-error(v) & Time(s) &$L^2$-error(u)&$L^2$-error(v) & Time(s)\\\hline
8  &8.8202e-4&8.8202e-4&0.7331&5.9450e-5&5.9450e-5&3.3517  \\
16 &2.2203e-4&2.2203e-4&1.5142&2.6788e-5&2.6788e-5&6.5332  \\
32 &5.6555e-5&5.6555e-5&3.4880&1.3454e-5&1.3454e-5&13.318  \\
48 &2.4377e-5&2.4377e-5&5.9117&9.7126e-6&9.7126e-6&20.602   \\
64 &1.2608e-5&1.2608e-5&8.7127&8.0249e-6&8.0249e-6&27.735   \\
\hline
\end{tabular}
\end{table}

We compare in Table \ref{s5:tb5} the semi-implicit direct method with  the semi-implicit fast method
for Case I.
Clearly, the two methods achieve almost similar numerical solutions. This can be explained from
the fact that the time discretization error of the fast method contains two parts, one part is the
same as that of the direct method,
the other part is from the quadrature to disctetize the contour integral,
which is independent of and also far smaller than the first part due to the sufficient number
of quadrature points,
see related results shown in Figure \ref{eg31fig4}(b). The fast method is much more efficient
than the direct method. ``Out of memory'' occured for the direct method when $t>1200$, while
the fast method works for $t\gg 1200$ and the computational cost increases almost linearly,
 see Table \ref{s5:tb5}.
Theoretically, the computational time of the direct method increases proportional to $n_T^2$, so
the computational time at $t=400$ is about $4\times 19207.5798$ seconds, but we did not
obtain the results within the expected time due to the memory problem.

\begin{table}[!h]
\caption{Comparison of the semi-implicit method with direct convolution
and fast convolution, Case I,   $\alpha_1=0.2,\alpha_2=0.8$,
$h=1/128,\tau=0.01$, and $\kappa_1=\kappa_2=2$.}\label{s5:tb5}
\centering\footnotesize
 \begin{tabular}{|c|ccc|ccr|}
\hline
\multicolumn{1}{|c|}{ } &
\multicolumn{3}{|c|}{Direct convolution}&\multicolumn{3}{|c|}{Fast convolution} \\\hline
$t$&$L^2$-error(u) &$L^2$-error(v) & Time(s) &$L^2$-error(u)&$L^2$-error(v)& Time(s)\\\hline
%20  &7.9792e-6&5.5881e-7&223.7567  &7.9792e-6&5.5881e-7&472.2923    \\
40  &7.0926e-6&2.5317e-7&1253.9869 &7.0926e-6&2.5317e-7&994.6982    \\
100 &6.4289e-6&1.4192e-7&4849.6792 &6.4289e-6&1.4192e-7&2618.0711   \\
200 &5.7959e-6&8.0380e-8&19207.5798&5.7959e-6&8.0380e-8&5463.2218   \\
300 &5.7810e-8&5.7810e-8&43056.7172&5.7810e-8&5.7810e-8&7954.6040   \\
400 &    -    &    -    &     -    &5.2029e-6&4.5797e-8&11330.6522   \\
%600 &         &         &          &4.8763e-6&3.3011e-8&17230.9684  \\
%800 &         &         &          &4.6538e-6&2.6182e-8&23165.2191   \\
1000&    -    &    -    &    -     &4.4865e-6&2.1880e-8&29548.2192   \\
%1200&         &         &          &4.3533e-6&1.8897e-8&35788.3115   \\
%1400&         &         &          &4.2431e-6&1.6696e-8&41913.4897   \\
%1600&         &         &          &4.1494e-6&1.4998e-8&48315.1374   \\
%1800&         &         &          &4.0682e-6&1.3645e-8&54616.2781   \\
2000&    -    &    -    &    -     &3.9966e-6&1.2539e-8&60915.4783   \\
\hline
\end{tabular}
\end{table}
%\begin{table}[!h]
%\caption{Comparison of the semi-implicit method with direct convolution
%and fast convolution, Case I,   $\alpha_1=0.2,\alpha_2=0.8$,
%$h=1/128,\tau=0.01$, and $\kappa_1=\kappa_2=2$.}\label{s5:tb5}
%\centering\footnotesize
% \begin{tabular}{|c|ccc|ccr|}
%\hline
%\multicolumn{1}{|c|}{ } &
%\multicolumn{3}{|c|}{Direct convolution}&\multicolumn{3}{|c|}{Fast convolution} \\\hline
%$t$&$L^2$-error(u) &$L^2$-error(v) & Time(s) &$L^2$-error(u)&$L^2$-error(v)& Time(s)\\\hline
%20  &7.9792e-6&5.5881e-7&223.7567  &7.9792e-6&5.5881e-7&472.2923    \\
%40  &7.0926e-6&2.5317e-7&1253.9869 &7.0926e-6&2.5317e-7&994.6982    \\
%100 &6.4289e-6&1.4192e-7&4849.6792 &6.4289e-6&1.4192e-7&2618.0711   \\
%200 &5.7959e-6&8.0380e-8&19207.5798&5.7959e-6&8.0380e-8&5463.2218   \\
%300 &5.7810e-8&5.7810e-8&43056.7172&5.7810e-8&5.7810e-8&7954.6040   \\
%400 &         &         &          &5.2029e-6&4.5797e-8&11330.6522   \\
%600 &         &         &          &4.8763e-6&3.3011e-8&17230.9684  \\
%800 &         &         &          &4.6538e-6&2.6182e-8&23165.2191   \\
%1000&         &         &          &4.4865e-6&2.1880e-8&29548.2192   \\
%1200&         &         &          &4.3533e-6&1.8897e-8&35788.3115   \\
%1400&         &         &          &4.2431e-6&1.6696e-8&41913.4897   \\
%1600&         &         &          &4.1494e-6&1.4998e-8&48315.1374   \\
%1800&         &         &          &4.0682e-6&1.3645e-8&54616.2781   \\
%2000&         &         &          &3.9966e-6&1.2539e-8&60915.4783   \\
%\hline
%\end{tabular}
%\end{table}

For Case II, we do not have an explicit form of the analytical solution, we show numerical solutions in Figures \ref{eg51fig4} and \ref{eg51fig5}. Figures \ref{eg51fig4} (a) and (b) display, respectively,
the numerical solutions of $u$ and $v$ at $t=50$ for $(\alpha_1,\alpha_2)=(0.8,0.2)$.
Figures \ref{eg51fig4} (c) and (d) show  numerical solutions of $u$ and $v$  at different times with $y=0.5$,
we see that the numerical solutions decay as time evolves, see also Figure \ref{eg51fig5}.
For other fractional orders,  similar behavior
can be observed, see  Figures \ref{eg51fig5} (a) and (b) for $\alpha_1=\alpha_2=0.5$.

\begin{figure}[!h]
\begin{center}
\begin{minipage}{0.4\textwidth}\centering
\epsfig{figure=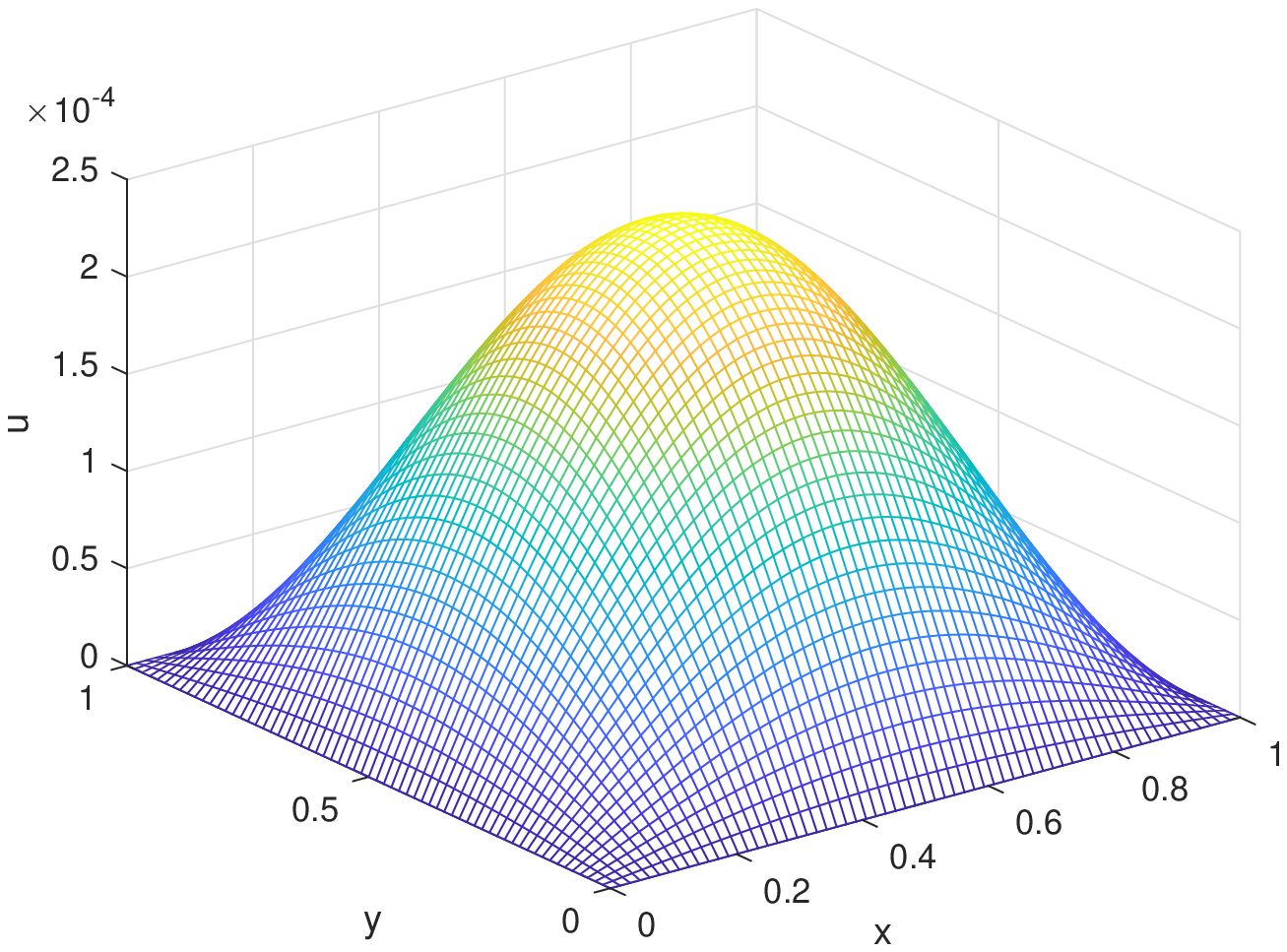,width=5cm} \par {(a) $t=50$.}
\end{minipage}
\begin{minipage}{0.4\textwidth}\centering
\epsfig{figure=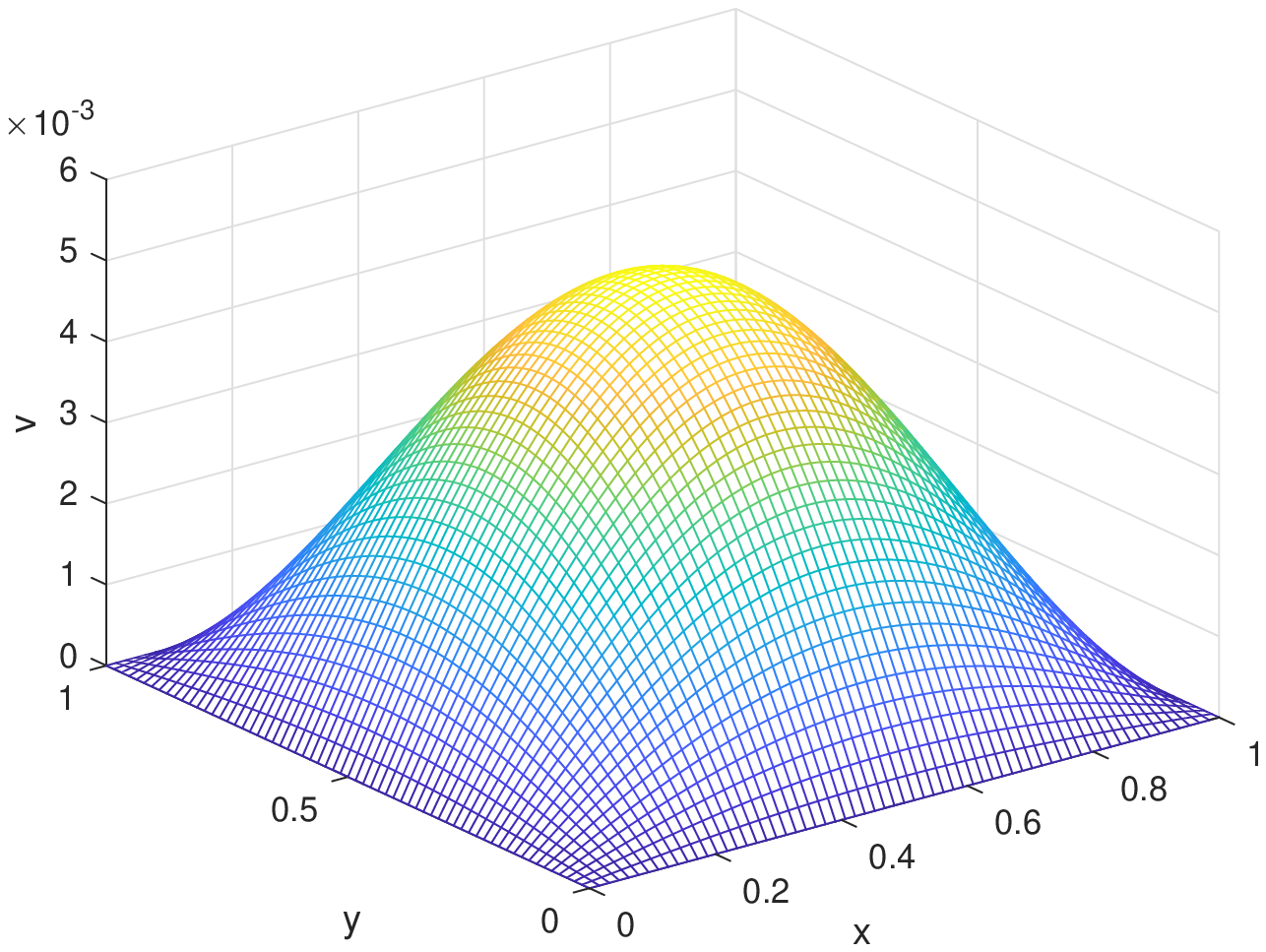,width=5cm} \par {(b) $t=50$}
\end{minipage}\\
\begin{minipage}{0.4\textwidth}\centering
\epsfig{figure=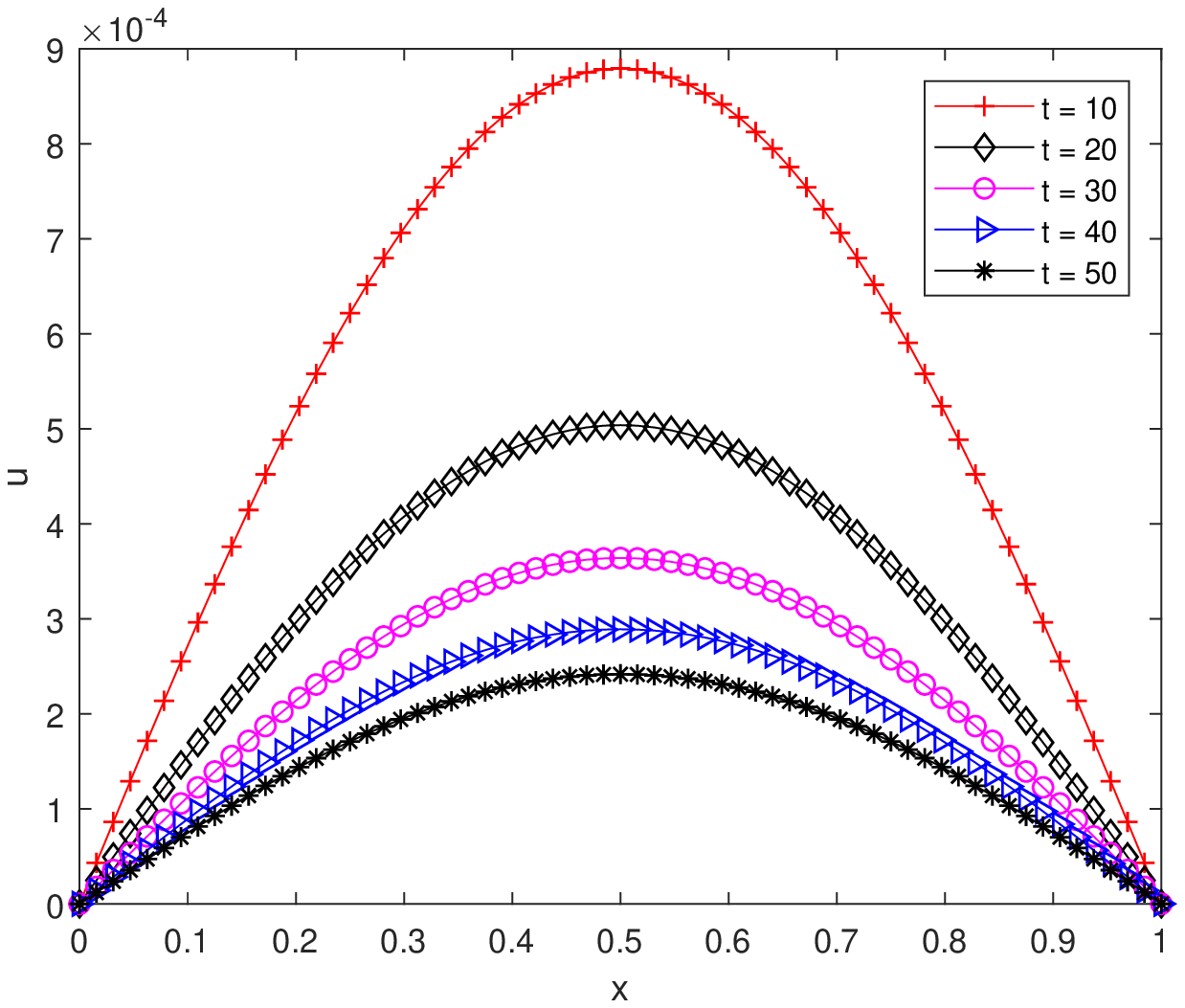,width=5cm} \par {(c) $y=0.5$.}
\end{minipage}
\begin{minipage}{0.4\textwidth}\centering
\epsfig{figure=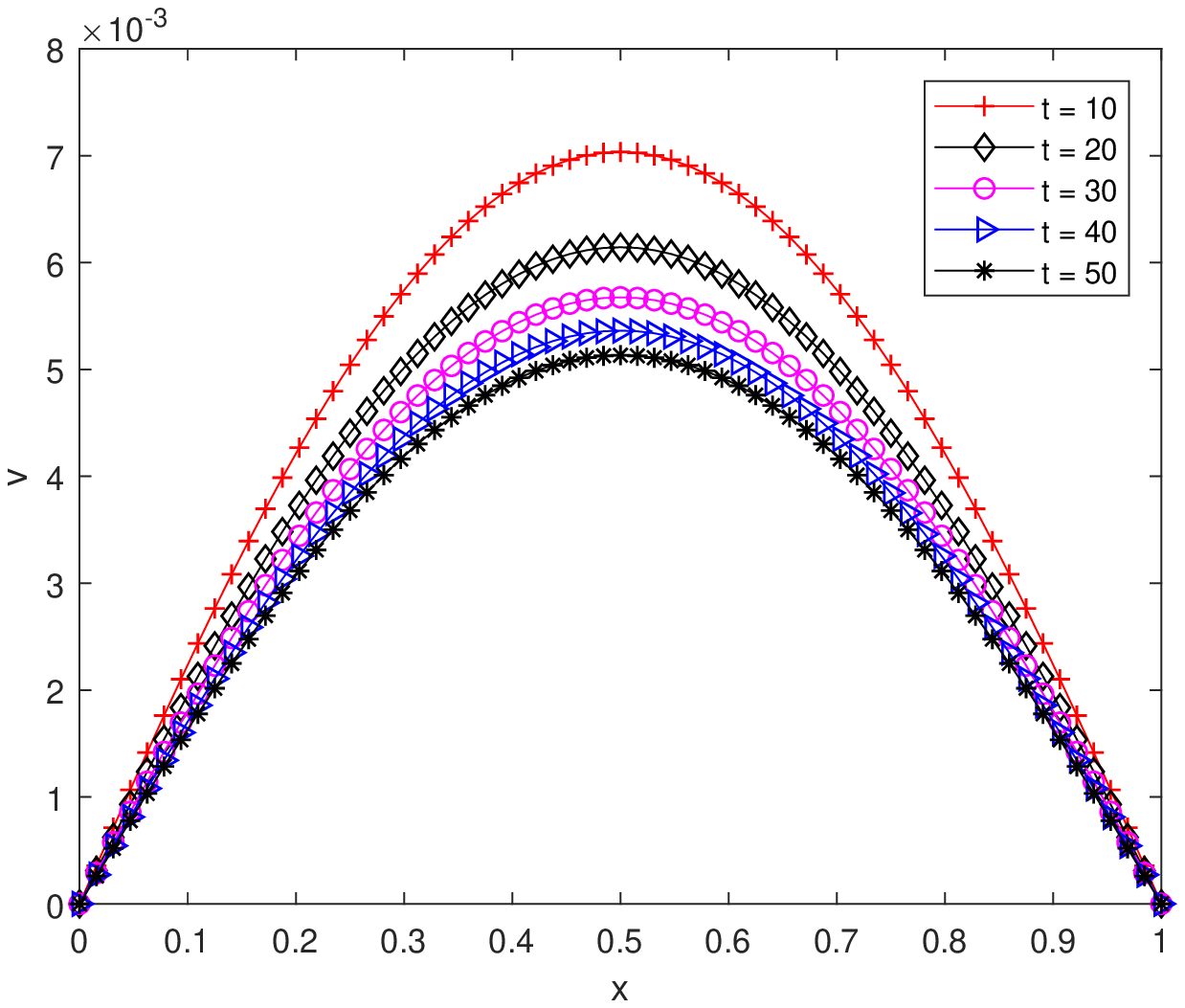,width=5cm} \par {(d) $y=0.5$.}
\end{minipage}
\end{center}
\caption{Numerical solutions  for Case II, $\alpha_1=0.8,\alpha_2=0.2$,
 $\kappa_1=\kappa_2=2,\tau=0.01,h=1/64,B=5,N=32$.\label{eg51fig4}}
\end{figure}

\begin{figure}[!h]
\begin{center}
\begin{minipage}{0.4\textwidth}\centering
\epsfig{figure=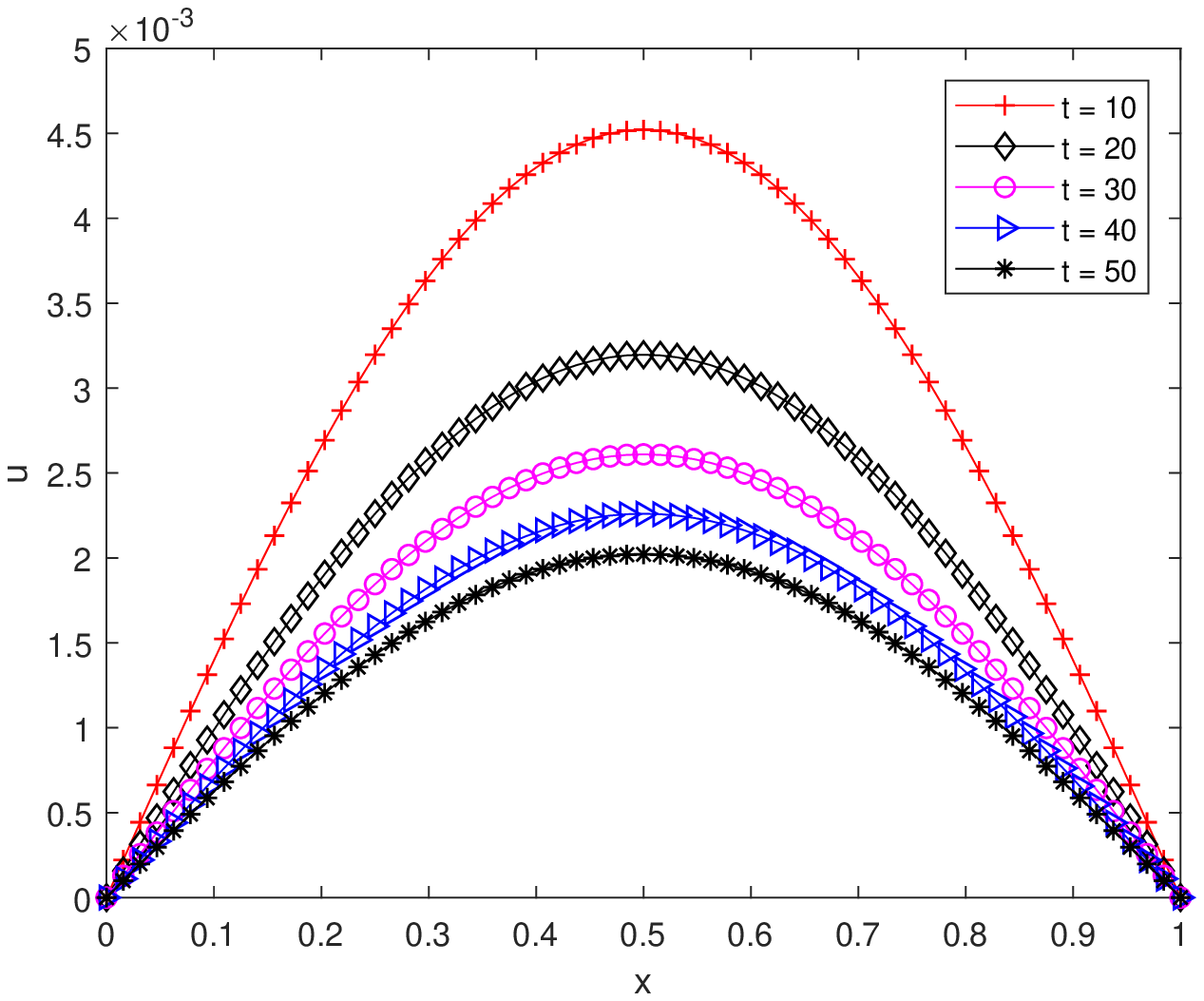,width=5cm} \par {(a) $y=0.5$.}
\end{minipage}
\begin{minipage}{0.4\textwidth}\centering
\epsfig{figure=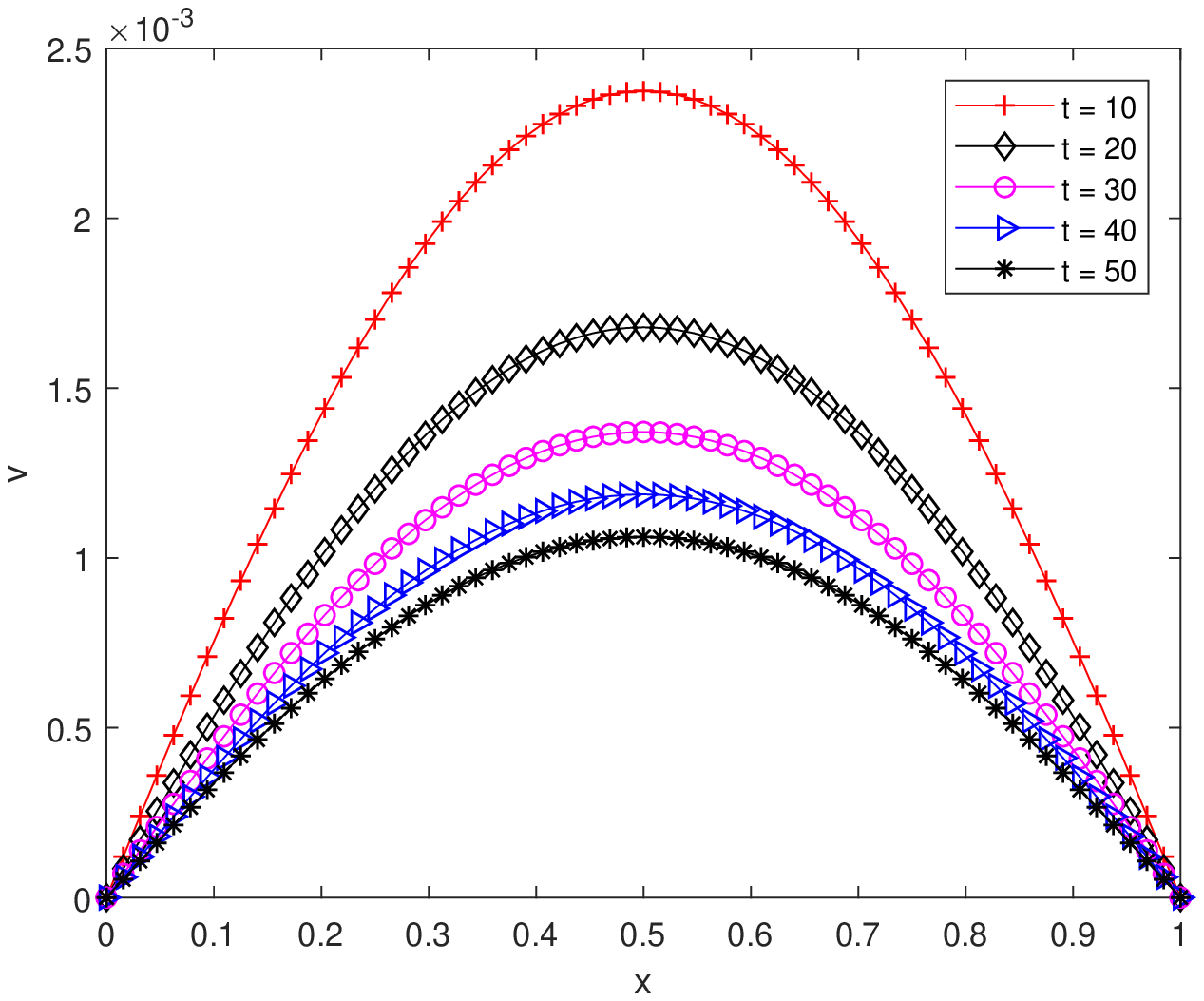,width=5cm} \par {(b) $y=0.5$.}
\end{minipage}
\end{center}
\caption{Numerical solutions  for Case II, $\alpha_1=\alpha_2=0.5$,
 $\kappa_1=\kappa_2=2,\tau=0.01,h=1/64,B=5,N=32$.\label{eg51fig5}}
\end{figure}

\section{Conclusion and discussion}\label{concl}
In this paper, we considered how to efficiently solve nonlinear time-fractional differential equations.
The nonlinearity is resolved by proposing a new class of semi-implicit time-stepping methods.
The stability of the new semi-implicit methods was investigated, and the stability interval was
theoretically given and verified numerically. In particular, several cases of the semi-implicit methods
are unconditionally stable by choosing   parameters that are explicitly given.
We also extend the semi-implicit methods for a scalar equation to a system of equations with
the stability condition given theoretically.

The nonlocality of the fractional operator leads to a discrete convolution,
which is costly by direct computation. This issue is resolved by the new fast
and memory-saving algorithm.  The new approach simplifies and extends the
method in \cite{SchLopLub06} to a wider class of discrete convolution methods for
approximating the fractional operator with coefficients generated from
the generating functions \cite{Lub86}. We prove that the error originating from the
fast calculation can be arbitrarily small and is independent of the truncation error
of the direct method. In the fast method, a series of ODEs with homogenous initial conditions
need to be solved
at each time step. These ODEs were solved by the multi-step method that corresponds
to the fractional multi-step method for the fractional operator in \cite{SchLopLub06}. However,
in the current work,
these ODEs are   solved using the \emph{backward Euler} method. If the $k$-step ($k>1$)
method is  applied, then additional errors may occur since   $k$ initial values are needed
to start the ODE solver, but only one initial value is known.

The semi-implicit methods in the present work achieve at most second-order accuracy, even though
a high-order time-stepping method for the fractional operator is applied.
In future work, we will consider how to construct uniformly stable high-order
semi-implicit methods for nonlinear time-fractional   differential equations.
Another computational issue is to seek a uniform quadrature with high accuracy
to discretize the integral contour \eqref{Flambda}, which will make it easier to  implement
the present fast method similarly to those given in \cite{BafHes17b,JiangZZZ16,JingLi10}.

\appendix
{
\section{Proofs of Theorems \ref{thm:4-1} and \ref{thm:4-1-b}}\label{appenix-A}
Proof of Theorem \ref{thm:4-1}.
\begin{proof}
Denote by $U(z) = \sum_{n=0}^{\infty}U_nz^n$, $|z|\leq 1$.
We have from \eqref{s4:IMEX-2}
\begin{equation*}\begin{aligned}\label{s4:IMEX-4}
\sum_{n=2}^{\infty}\sum_{j=0}^{n}\omega^{(\alpha)}_{n-j}(U_j-U_0)z^n
= (\lambda -\kappa)\tau^{\alpha} \sum_{n=2}^{\infty}U_nz^n
+ (\rho + \kappa)\tau^{\alpha}\sum_{n=2}^{\infty} (2U_{n-1}-U_{n-2})z^n,
\end{aligned}\end{equation*}
which leads to
\begin{equation}\begin{aligned}\label{s4:IMEX-5}
&\omega(p,\alpha,1,z)U(z)-\omega^{(\alpha)}_0U_0
- (\omega^{(\alpha)}_0U_1 + \omega^{(\alpha)}_1U_0)z
-U_0\sum_{n=2}^{\infty}\sum_{j=0}^{n}\omega^{(\alpha)}_{j}z^n\\
=& (\lambda -\kappa)\tau^{\alpha} (U(z)-U_0-U_1z)
+ (\rho + \kappa)\tau^{\alpha}(2z(U(z)-U_0)-z^2U(z)),
\end{aligned}\end{equation}
where we have used the following property
$$\omega(p,\alpha,1,z)U(z)=\sum_{n=0}^{\infty}\sum_{j=0}^{n}\omega^{(\alpha)}_{n-j}U_jz^n.$$

Rewrite \eqref{s4:IMEX-5} into the following form
\begin{equation}\begin{aligned}\label{s4:IMEX-6}
 U(z) = & \frac{H(z)}{\omega(p,\alpha,1,z) - \tau^{\alpha}(\lambda +\rho)
+ \tau^{\alpha}(\rho + \kappa)(1-z)^2},
\end{aligned}\end{equation}
where $H(z)=\sum_{n=0}^{\infty}H_nz^n$. According to  Eq. (3.3) and
Lemma 3.5 in \cite{Lub86}, one has
$\sum_{j=0}^{n}\omega^{(\alpha)}_{j}=\frac{n^{-\alpha}}{\Gamma(1-\alpha)}+O(n^{-\alpha-1})$,
which yields $H_n = O(n^{-\alpha})$. On the other hand, we have
$\omega(p,\alpha,1,z) - \tau^{\alpha}(\lambda +\rho)
- \tau^{\alpha}(\rho + \kappa)(1-z)^2=\sum_{n=0}^{\infty}Q_nz^n=Q(z)$ with
$Q_n=O(n^{-\alpha-1})$, where $\omega^{(\alpha)}_{n}=O(n^{-\alpha-1})$ has been used  \cite{Lub86}.
According to \cite{Lub86b}, $U_n\to 0$ as $n\to \infty$ if $Q(z)\neq0,|z|\leq 1$,
i.e., the method is stable if $\tau^{\alpha} \neq \frac{\omega(p,\alpha,1,z)}{(\lambda +\rho)
-(\rho + \kappa)(1-z)^2},|z|\leq 1$. The proof is complete.
\end{proof}

Proof of Theorem \ref{thm:4-1-b}.
\begin{proof}
Lemma 3.5 and Eq. (3.3) in \cite{Lub86} yield
$$\sum_{j=0}^{n}\omega^{(\alpha)}_{n-j}j^{\sigma_k}=\frac{n^{\sigma_k-\alpha}}{\Gamma(1+\sigma_k-\alpha)}
+O(n^{\sigma_k-\alpha-p}) + O(n^{-\alpha-1}),$$
which leads to
$$\sum_{j=1}^{m}w^{(\alpha)}_{n,j}j^{\sigma_k}
= O(n^{\sigma_k-\alpha-p})+O(n^{-\alpha-1}),{\quad}1\leq k \leq m.$$
The above linear system implies  that the starting weights $w_{n,j}^{(\alpha)}$ in \eqref{s4:Dalf} satisfy
$$w_{n,j}^{(\alpha)}=O(n^{\sigma_1-\alpha-p}) + O(n^{\sigma_2-\alpha-p})+\cdots
+O(n^{\sigma_m-\alpha-p})+ O(n^{-\alpha-1}).$$
By the same reasoning, we can obtain that the starting weights $w_{n,j}^{(f)}$ and $w_{n,j}^{(u)}$ used
in \eqref{s4:extrapolation-2nd} satisfy
\begin{eqnarray*}
w_{n,j}^{(u)}&=&O(n^{\sigma_1-q}) + O(n^{\sigma_2-q})+\cdots+O(n^{\sigma_{m_u}-q}),\\
w_{n,j}^{(f)}&=& O(n^{\delta_1-q}) + O(n^{\delta_2-q})+\cdots+O(n^{\delta_{m_f}-q}).
\end{eqnarray*}

Denote  $m_{\max}=\max\{m,m_u,m_f\}+1$ and
$$U(z)=\sum_{n=m_{\max}}^{\infty}U_nz^n.$$
For the method \eqref{s4:IMEX} with correction terms, we can similarly derive
\begin{equation}\begin{aligned}\label{B:IMEX-6}
 U(z) = & \frac{H(z)+\hat{H}(z)}{\omega(p,\alpha,1,z) - \tau^{\alpha}(\lambda +\rho)
+ \tau^{\alpha}(\rho + \kappa)(1-z)^2},
\end{aligned}\end{equation}
where $H(z)$ is similarly derived as in \eqref{s4:IMEX-6} with $H_n\to 0$ as $n\to\infty$,
$\hat{H}(z)=\sum_{n=0}^{\infty}\hat{H}_nz^n$,
 $\hat{H}_n=0, 0\leq n \leq m_{\max}$, and
$$\hat{H}_n=-\sum_{j=1}^{m}w^{(\alpha)}_{n,j}(U_j-U_0)
+\rho\tau^{\alpha}\sum_{j=1}^{m_f}w^{(f)}_{n,j}(U_j-U_0)
+\kappa\tau^{\alpha}\sum_{j=1}^{m_u}w^{(f)}_{n,j}(U_j-U_0)$$
for $n\geq m_{\max}$. Obviously,
$\hat{H}_n \to 0$ as $n\to \infty$ if
$${\sigma_k-\alpha-p}<0,{\quad}\sigma_{m_u}-q<0,{\quad}\sigma_{m_f}-q<0.$$
The above inequalities yield Theorem \ref{thm:4-1-b}, which ends the proof.
\end{proof}
}
\section{Proof of Theorem \ref{thm:4-2}}\label{appenix-B}
%\begin{lemma}\label{lm-A1}
%Let $0<\alpha<1$, $\rho<0,\lambda\leq0$, $\kappa\geq0$,
%%$f(z)=\frac{(1-z)^{\alpha}(1+\frac{\alpha}{2}-\frac{\alpha}{2}z)}
%%{\lambda-\kappa+(\rho+\kappa)(2z-z^2)}$.
%$f(z)=\frac{w(p,\alpha,z)}{\psi_q(z)},|z|\leq 1,1\leq p,q \leq 2$.
%If $\kappa$ satisfies \eqref{thm:eq:4-2}, then
%$f(z)$  cann't be a positive real number.
%\end{lemma}
We prove Theorem \ref{thm:4-2} for  $p=q=2$ with $\omega(2,\alpha,1,z)=\tau^{\alpha}\omega(2,\alpha,\tau,z)
=(1-z)^{\alpha}(1+\frac{\alpha}{2}-\frac{\alpha}{2}z)$,
other cases can be similarly proved.
The following notations are used to simplify the proof, i.e.,
\begin{equation}
\begin{aligned}
&D=\{(x,y)|x^2+y^2<1\}, {\quad}\px[]D=\{(x,y)|x^2+y^2=1\},\\
&D_U=\{(x,y)|x^2+y^2<1,0<y<1\}, {\quad}\px[]D_U= \px[]D^{(1)}_U\cup \px[]D^{(2)}_U,
\end{aligned}\end{equation}
where $\px[]D^{(1)}_U=\{(x,y)|-1\leq x \leq 1, y = 0\},
\px[]D^{(2)}_U=\{(x,y)|x^2+y^2=1,|x|<1,0<y<1\}$.
Denote $\bar{D}=D\cup \px[]D$ and $\bar{D}_U=D_U\cup \px[]D_U$.
Then, $D_{U}$ is the interior of the upper semi-circular domain with
boundary $\px[]{D}_U$. We can similarly define the lower semi-circular domain $\bar{D}_L$.
%$\bar{D}_L={D}_L\cup \px[]{D}_L$, where ${D}_L=\{(x,y)|x^2+y^2<1,-1<y<0\}$
%and  $\px[]{D}_L=\{(x,y)|-1\leq x \leq 1, y = 0\}\cup\{(x,y)|x^2+y^2=1,|x|<1,-1<y<0\}$.
The proof of Theorem  \ref{thm:4-2} is equivalent to proving that
$f(z)=\frac{\omega(2,\alpha,1,z)}{\lambda-\kappa+(\rho+\kappa)(2z-z^2)}$
is not positive real for $z \in \bar{D}$.
We  just need to prove that $f(z)$ is not positive real  for $z\in \bar{D}_U$,
which   also holds for  $z\in \bar{D}_L$ by the same reasoning.

\begin{proof}
%For $p=q=2$, one has $\omega(2,-\alpha,1,z)=(1-z)^{\alpha}(1+\frac{\alpha}{2}-\frac{\alpha}{2}z)$.
For simplicity, we denote
$W(x,y)=(1-z)^{\alpha}(1+\frac{\alpha}{2}-\frac{\alpha}{2}z)$,
and $V(x,y)=\lambda-\kappa+(\rho+\kappa)(2z-z^2)$, where $z=x+iy=r\exp(i\theta)$, $i^2=-1$,
$\theta\in[-\pi,\pi],0\leq r \leq 1$.
Denote $W(x,y)=W_1(x,y) + iW_2(x,y)$ and
$V(x,y)=V_1(x,y) + iV_2(x,y)$, where
$W_1,W_2,V_1,V_2$ are real.
Then we have
\begin{equation}\label{eq:A1-1}\left\{\begin{aligned}
&W_1(x,y)\geq0,{\qquad\qquad~~}|x|^2+|y|^2\leq 1,\\
&W_2(\cos\theta,\sin\theta)\geq 0,{\qquad}\theta\in[-\pi,0],\\
&W_2(\cos\theta,\sin\theta)\leq0,{\qquad}\theta\in[0,\pi].
\end{aligned}\right.\end{equation}
The first inequality can be found in \cite{WangVong14}.
Let $\phi(\theta)=\Big(2\sin\frac{\theta}{2}\Big)^{\alpha}$. Then we have
$W_2(\cos\theta,\sin\theta) =\phi(-\theta)
\Big[\Big(1+\frac{\alpha}{2}-\frac{\alpha}{2}\cos\theta\Big)\sin\frac{\alpha}{2}(\theta+\pi)
-\frac{\alpha}{2}\cos\frac{\alpha}{2}(\theta+\pi)\sin(\theta)\Big]>0$,
$\theta\in(-\pi,0)$. The third inequality in \eqref{eq:A1-1} can be similarly
derived from $W_2(\cos\theta,\sin\theta)=\phi(\theta)
\Big[\Big(1+\frac{\alpha}{2}-\frac{\alpha}{2}\cos\theta\Big)\sin\frac{\alpha}{2}(\theta-\pi)
-\frac{\alpha}{2}\cos\frac{\alpha}{2}(\theta-\pi)\sin\theta\Big]<0, \theta\in(0,\pi)$.

For $V_1$ and $V_2$, we have the following results.
\begin{equation}\label{eq:A1-2}\left\{\begin{aligned}
&V_1(x,y)<0,{\qquad\qquad }
\frac{\lambda-3\rho}{4}<\kappa<-2\lambda-3\rho,\quad|x|^2+|y|^2\leq 1,\\
&V_2(\cos\theta,\sin\theta)\leq0,{\quad}\kappa>-\rho,\quad\theta\in[-\pi,0],\\
&V_2(\cos\theta,\sin\theta)\geq0,{\quad}\kappa>-\rho,\quad\theta\in[0,\pi].
\end{aligned}\right.\end{equation}
The first inequality in \eqref{eq:A1-2} can be derived by checking
$\widehat{V}_1(\pm 1)<0$ and $\widehat{V}_1(1/2)<0$,
where $\widehat{V}_1(\cos\theta)=V_1(\cos\theta,\sin\theta)
=\lambda-\kappa +(\rho+\kappa)(2\cos\theta-2\cos^2\theta+1).$
The second and third inequalities in \eqref{eq:A1-2} can be easily deduced from
$V_2(\cos\theta,\sin\theta) =(\rho+\kappa)(2\sin\theta-\sin2\theta)
=2(\rho+\kappa)\sin\theta(1-\cos\theta)$.

Next, we prove that $f(z)=\frac{\omega(2,\alpha,1,z)}{\psi_2(z)}=\frac{W(x,y)}{V(x,y)}$
cannot be positive real on $\bar{D}_U$.
%We complete the proof in two steps.
\begin{itemize}[leftmargin=*]
  \item {Step 1)} Assume that $\frac{\lambda-3\rho}{4}<\kappa<-2\lambda-3\rho$ and
  there exists a positive constant $c>0$,
  such that $f(z)=\frac{W_1+iW_2}{V_1+iV_2}=c$. Then we have
  $W_1-cV_1=0$. Eqs.  \eqref{eq:A1-1} and \eqref{eq:A1-2} imply
  $W_1-cV_1>0$ for  all $|z|\leq 1$,  which yields a contradiction.
  Therefore, $f(z)$ cannot be a positive real number when
 $\frac{\lambda-3\rho}{4}<\kappa<-2\lambda-3\rho$.

  \item Step 2)   Since $\omega(2,\alpha,1,z)$ is  analytic  for $|z|<1$ and
  continuous for $|z|=1$,
  the  imaginary part $W_2(x,y)$ of $\omega(2,\alpha,1,z)$
  is a harmonic function in the interior of the unit circular domain $\bar{D}$
  and is continuous on the boundary  $\px[]{D}$ of $\bar{D}$.
  It implies that $W_2(x,y)$ (or $V_2(x,y)$) is also analytic in $D_U$ and continuous on
  $\px[]D_U$.
  Therefore,  $W_2$ (or $V_2$) attains
  its maximum and  minimum value on the boundary $\bar{D}_U$.
  From \eqref{eq:A1-1} and \eqref{eq:A1-2}, we have $W_2(x,y)\geq0$
  and $V_2(x,y)\leq0$ for $(x,y)\in \bar{D}_U$ if $\kappa>-\rho$.
  For any $(x,y)\in \bar{D}_U\setminus \px[]D^{(1)}_U$,
  we assume that there exists a positive number $c$ such that
  $f(z)=\frac{W_1+iW_2}{V_1+iV_2}=c$, which leads to
  $W_2-cV_2=0$. On the other hand,
  we have  $V_2(x,y)=(\rho+\kappa)2r\sin\theta(1-r\cos\theta)>0$  for
  $(x,y)=(r\cos\theta,r\sin\theta)\in \bar{D}_U\setminus \px[]D^{(1)}_U$,
  which yields $0=W_2-cV_2<0$. Therefore, $f(z)$ cannot be positive real for
  $z=(x,y)\in  \bar{D}_U\setminus \px[]D^{(1)}_U$.
  If $(x,y)\in \px[]D^{(1)}_U$, then $f(z)$ is real
  and $f(z)=\frac{W_1}{V_1}\leq0$ due to the fact $W_1\geq0$ and $V_1<0$.
  Therefore, $f(z)$ is not positive real on $\bar{D}_U$ if $\kappa>-\rho$.
\end{itemize}

From Steps 1) and 2), we prove that $f(z)$ is not positive real on $\bar{D}_U$
if $\kappa>\frac{\lambda-3\rho}{4}$.
We can similarly prove that $f(z)$ is not positive real on $\bar{D}_L$ $f(z)$.
Hence, $f(z)$ is not positive real
for $|z|\leq 1$, which implies
the stability region $\mathbb{S}$ defined \eqref{s4:stability-p} contains
the interval $(0,\infty)$. The proof is completed.
\end{proof}
%\lipsum[71]

\section*{Acknowledgments}
The authors wish to thank the referees for their constructive
comments and suggestions, which greatly improve
the quality of this paper.
%
%\bibliographystyle{siamplain}
%\bibliography{fded19m1y17,fde,fde_regularity}{}
\def\cprime{$'$} \def\cprime{$'$} \def\cprime{$'$} \def\cprime{$'$}

\end{document}